\documentclass[11pt, oneside]{article} % use "amsart" for AMSLaTeX format

\usepackage[utf8]{inputenc}
\usepackage[T1]{fontenc}
\usepackage{xcolor} % to expand list of colours available: black, blue, brown, cyan, darkgrey, 
\usepackage{color} % to write in colours 
\usepackage{amsmath}
\usepackage{amssymb} % note: it loads \usepackage{amsfonts}
\usepackage{mathrsfs} % for italic letters, use the command: \mathscr{}
\usepackage{bm} % for isolated bold symbols within an equation
\usepackage{bbm} % for numbers in the \mathbb style
\usepackage{float}
\usepackage{multirow}
\usepackage{comment}

\usepackage{nameref}
\usepackage{xr-hyper}
\usepackage{makecell, float}
\usepackage{amsthm} 
\usepackage{amsmath}
\theoremstyle{plain}
\newtheorem{theorem}{Theorem}[section] % use [section] (or [chapter] in book class) to follow the section (chapter) number
\newtheorem{condition}[theorem]{Condition}
\newtheorem{example}[theorem]{Example}
\newtheorem{lemma}[theorem]{Lemma}

\newtheorem{remark}[theorem]{Remark}

\usepackage{algorithm2e} % to write algorithms
\SetKwInput{KwInputs}{Inputs}
\usepackage[noend]{algpseudocode}
\makeatletter
\def\BState{\State\hskip-\ALG@thistlm}
\makeatother

\usepackage{graphicx} % to insert images. Use the command: \includegraphic{}
\usepackage{subcaption} % to add captions to subfigures
\graphicspath{{figures/}}

\usepackage{tikz} % to create plots with LaTeX
\usetikzlibrary{positioning ,shadows,matrix}
\usepackage{pgfplots}
\pgfplotsset{compat=1.18}
\usepgfplotslibrary{groupplots}
\usepackage[english]{babel} % to manage the language. It is possible to specify more than one: e.g.[english,italian], and switch among them. It automatically handles the division in syllables

\usepackage[]{natbib}
\usepackage{bookmark}

\IfFileExists{xurl.sty}{\usepackage{xurl}}{} % add URL line breaks if available
\hypersetup{
  pdftitle={Title},
  pdfauthor={Author 1; Author 2},
  pdfkeywords={3 to 6 keywords, that do not appear in the title},
  colorlinks=true,
  linkcolor={blue},
  filecolor={Maroon},
  citecolor={Blue},
  urlcolor={Blue},
  pdfcreator={LaTeX via pandoc}}

\usepackage{hyperref} % to include hyperlinks
\hypersetup{
    pdfpagemode={UseOutlines},
    bookmarksopen,
    pdfstartview={FitH},
    colorlinks,
    linkcolor={blue}, % color of the index
    citecolor={blue}, % color of bibliographic references
    urlcolor={blue} % color of urls
}

\usepackage[letterpaper,margin=1.25in]{geometry} % to set the margins

\usepackage{enumitem} % to set the enumerate/itemize symbols. Use, e.g., 
\def\iid{\overset{\textnormal{iid}}{\sim}} % i.i.d. symbol
\DeclareMathOperator{\sgn}{sgn}

\makeatletter  % cartesian product symbol
\@ifundefined{@currsizeindex} 
  {\RequirePackage{relsize}\let\dolarger\relsize} 
  {\def\dolarger#1{\larger[#1]}} 
\newcommand*\@@bigtimes[2]{\vphantom{\prod} 
  \vcenter{\hbox{\dolarger{4}$\m@th#1\mkern-2mu\times\mkern-2mu$}}} 
\newcommand*\bigtimes{\mathop{\mathpalette\@@bigtimes\relax}\displaylimits} 
\makeatother

\def\iid{\overset{\textnormal{iid}}{\sim}} % i.i.d. symbol
\def\N{\mathbb{N}}\def\R{\mathbb{R}}\def\1{\mathbbm{1}}

\def\Ccal{\mathcal{C}}\def\Hcal{\mathcal{H}}\def\Ncal{\mathcal{N}}\def\Rcal{\mathcal{R}}\def\Tcal{\mathcal{T}}\def\Ucal{\mathcal{U}}\def\Wcal{\mathcal{W}}\def\Zcal{\mathcal{Z}}

\def\Enorm{\textnormal{E}}
\def\vol{\textnormal{vol}}

\title{\bf Increasing domain asymptotics for covariate-based nonparametric Bayesian intensity estimation with Gaussian and Besov-Laplace priors}
\author{Patric Dolmeta\\
    ESOMAS Department, University of Turin\\
    and \\
    Matteo Giordano\thanks{
    M.G. has been partially supported by MUR, PRIN project 2022CLTYP4. The authors gratefully acknowledge the support from the ``de Castro" Statistics Initiative.}\\
    ESOMAS Department, University of Turin}
\date{} % set the desired data within {}, leave empty to avoid displaying the date

\begin{document}

\maketitle

\begin{abstract}
We study the problem of estimating the intensity function of a covariate-driven point process based on observations of the points and covariates over a large window. We consider the nonparametric Bayesian approach, and show that a wide class of Gaussian priors, combined with flexible link functions, achieves minimax-optimal posterior contraction rates in the increasing domain asymptotics and under the assumption that the covariates be ergodic. We also employ Besov-Laplace priors, which are popular in imaging and inverse problems due to their edge-preserving and sparsity-promoting properties. We prove that these yield optimal estimation of spatially inhomogeneous intensities belonging to Besov spaces with low integrability index. These results are based on a general concentration theorem that extends recent findings from the literature. To corroborate the theory, we provide extensive numerical simulations, implementing the considered procedures via suitable posterior sampling schemes. Further, we present two real data analyses motivated by applications in forestry and the environmental sciences. 
\end{abstract}

\vspace{24pt}

\noindent\textbf{Keywords.} Cox process;  
Frequentist analysis of Bayesian procedures;
Inhomogeneous Poisson process;
Minimax-optimal; 
Rescaled prior 

%
%\clearpage

\tableofcontents

\section{Introduction} \label{sec:Intro}

Quantifying the influence of covariates on the spatial distribution of events is a key problem in many applications with point pattern data. Consider observing the realisation of a point process $N$ and of a multivariate random field $Z$ over a Euclidean domain $\Wcal$, under the assumption that the intensity of $N$ depend on the covariates, specifically $\Enorm[N(A)] = \int_A \rho(Z(x))dx$ for all $A\subseteq \Wcal$ and some unknown function $\rho$. The goal is then to estimate $\rho$ from data $(N,Z)$. Concrete instances of this problem arise, for example, in forestry \cite{G08}, mineralogy \cite{BCST12} and the environmental sciences \cite{BGMMM20}. See also Section~\ref{Sec:RealData}.

The parametric approach to covariate-based intensity estimation has been extensively developed by both the frequentist and Bayesian communities, e.g., \cite{B78,D90,W07,RMC09, YL11}. This includes the celebrated log-Gaussian Cox model \cite{MSW98}. The available nonparametric frequentist approaches are predominantly of kernel type \cite{BRT16}, with the first asymptotic consistency results having been provided by \cite{G08} in a prototypical spatial statistics setting with increasing domains and ergodic covariates. In-fill asymptotic results for fixed observation windows were later developed by \cite{BCST12} and \cite{BGMMM20}.

On the other hand, the nonparametric Bayesian approach to the problem has been studied only more recently, after landmark methodological (e.g., ~\cite{L82,KS07,AMM09}) and theoretical (e.g., ~\cite{BSvZ15,KvZ15,DRRS17}) developments in models without covariates over the last two decades. Minimax-optimal posterior contraction rates for procedures based on Gaussian, mixture of Gaussians and P\'olya tree priors were derived by \cite{GKR25} in a similar increasing domain framework to \cite{G08}. Lastly, the preprint \cite{DG25arXiv} considered multi-bandwidth Gaussian process methods for the recovery of anisotropic intensities, proving asymptotic concentration in measurement schemes with independent and identically distributed (i.i.d.) observations of the points and covariates. The latter reference also first explored the implementation of covariate-based nonparametric Bayesian intensity estimation, devising a suitable posterior sampling algorithm and demonstrating its practical feasibility. See \cite[Section 1]{DG25arXiv} for an in-depth literature review, where many more references can be found.

%
%
%

%%%%%%%%%%%%%%%%%%%%%%%%%%%%%%%%%%%%%%%%%%%%%%%%%%%%%%%%%%%%%%%%%%%%%%%%%
\subsection{Main contributions}

This article aims to contribute to the aforementioned recent line of research. Following the nonparametric Bayesian paradigm--see \cite{GvdV17} for an overview--we model $\rho$ via prior distributions on function spaces and base our inferences on the associated posteriors, that is, the resulting conditional laws of $\rho|(N,Z)$, which provide point estimates and uncertainty quantification. Working, as in \cite{G08, GKR25}, in the increasing domain asymptotics, for priors of interest, we study the behaviour of the posterior in the infinitely informative data limit whereby $\vol(\Wcal)\to\infty$. Our first main result, Theorem \ref{Theo:GPRates}, shows that procedures based on a large class of Gaussian priors, combined with link functions satisfying a mild regularity condition, achieve, when properly tuned, optimal posterior contraction rates in $L^1$-distance. This significantly expands the scope of the previous findings of \cite{GKR25} for truncated Gaussian wavelet series priors and regular links, providing guarantees for methodologically-attractive choices such as log- and sigmoid-Gaussian priors based on standard families of covariance kernels; see \cite{MSW98,AMM09}, respectively.

We next consider Besov-Laplace priors. These belong to the broader class of Besov priors \cite{LSS09}, and enjoy significant popularity in inverse problems and imaging due to their `edge-preserving and sparsity promoting' properties that make them suitable for reconstructing `spatially inhomogeneous' functions. See e.g., ~\cite{DHS12,KLNS12,HB15} and references therein. In the context of covariate-based intensity estimation, their use--which, to our knowledge, had not been previously explored--is motivated by the realistic expectation that the intensity may present localised sharp variations due to the existence of `critical' covariate values that induce rapid changes in the probability with which the events occur. The frequentist analysis of posterior distributions arising from Besov priors has been developed in several recent articles, where optimal posterior contraction rates towards ground truths in Besov spaces containing spatially inhomogeneous functions have been established in the white noise model, density estimation and nonlinear inverse problems, among others; see \cite{ADH21,G23,AW24}. In Theorem \ref{Theo:LaplRates}, we show that correctly-tuned Besov-Laplace priors perform optimally also in the problem at hand.

The proofs are based on a refinement of the general program for posterior concentration set forth in \cite{GKR25}, where we employ concentration inequalities for ergodic fields to relate certain covariate-dependent metrics to the standard $L^1$-distance; see \ref{Sec:Proofs}. We then use rescaling techniques from the statistical theory of inverse problems, e.g., ~\cite{GN20,GR22,N23}, to obtain the required uniformity over the support of the considered priors.

To empirically illustrate our results, we devise efficient posterior sampling schemes, suited to the present increasing domain setting, employing ad-hoc dimension robust Markov chain Monte Carlo (MCMC) techniques for Gaussian and Besov priors. We then perform extensive numerical simulation studies with both spatially homogeneous and inhomogeneous ground truths, in one- and two-dimensional experimental settings. See Section \ref{Sec:Simulations}. Lastly, we employ the considered procedures in two real data analyses, with datasets from forestry and the environmental sciences, cf.~Section \ref{Sec:RealData}. A concluding discussion can be found in Section \ref{Sec:Discussion}. Further background material, all the proofs and additional empirical results are collected in the Supplement \cite{DG25Suppl}.

\section{Covariate-based nonparametric Bayesian intensity estimation}
\label{Sec:StatProbl}

For $d,D\in\N$, let $Z:=(Z(x),\ x\in\R^D)$ be a jointly measurable `covariate' random field with values in some measurable subset $\Zcal\subseteq\R^d$ (the `covariate space'). For an increasing sequence of compact `observation windows' $\Wcal_n\subseteq \Wcal_{n+1}\subset \R^D$ satisfying $\vol(\Wcal_n)\to\infty$ as $n\to\infty$,
let $Z^{(n)}:=(Z(x),\ x\in\Wcal_n)$ denote the covariates on $\Wcal_n$, and let $N^{(n)}:=\{X_1,\dots,X_{K^{(n)}}\}$ be a random point process on $\Wcal_n$ arising, conditionally given $Z^{(n)}$, as
\begin{equation}
	\label{Eq:PointProc}
		K^{(n)} |Z^{(n)} \sim \text{Po}\left(\int_{\Wcal_n}
		\lambda_\rho^{(n)}(x)dx\right); \hspace{2em} X_1,\dots,X_{K^{(n)}}|Z^{(n)},K^{(n)}
		 \iid \frac{\lambda_\rho^{(n)}(x)dx}{\int_{\Wcal_n}
			\lambda_\rho^{(n)}(x)dx},
\end{equation}
with $\lambda^{(n)}_\rho(x) := \rho(Z(x))$, $x\in\Wcal_n$, for some unknown bounded function $\rho : \Zcal\to[0,\infty)$. That is, $N^{(n)}|Z^{(n)}$ is distributed as an inhomogeneous Poisson process with first-order intensity $\lambda_\rho^{(n)}$. Unconditionally, $N^{(n)}$ is a Cox (`doubly stochastic') point process driven by the random measure $\lambda^{(n)}_\rho(x)dx$, cf.~\cite{C55}.

For some $n\in\N$, we assume that we observe a single realisation of $(N^{(n)},$ $Z^{(n)})$. The goal is then to nonparametrically estimate the unknown covariate-based intensity function $\rho$. This problem has been considered in various articles within the frequentist literature--see \cite{G08,BCST12,BGMMM20} and references therein--and recently also under the Bayesian approach by \cite{GKR25}. Following the setup in the latter, we maintain that $Z^{(n)}$ is almost surely bounded and that it defines a Borel random element in $L^\infty(\Wcal_n;\R^d)$, the Lebesgue space of $\R^d$-valued essentially bounded functions defined on $\Wcal_n$. We write $P_{Z^{(n)}}$ for the law of $Z^{(n)}$, $P^{(n)}_\rho$ for the joint distribution of the data pair $(N^{(n)},Z^{(n)})$ from \eqref{Eq:PointProc} with $Z^{(n)} \sim P_{Z^{(n)}}$, and $\Enorm^{(n)}_\rho$ for the expectation with respect to $P^{(n)}_\rho$. From the standard theory of inhomogeneous Poisson processes, it follows that $P^{(n)}_\rho$ is absolutely continuous with respect to the law $P^{(n)}_1$ arising in the homogeneous case, e.g., ~\cite[Theorem 1.3]{K98}, with likelihood
\begin{equation}
	\label{Eq:Likelihood}
	L^{(n)}(\rho)
	%:=\frac{dP^{(n)}_\rho}{dP^{(n)}_1}(N^{(n)},Z^{(n)})
	=
	e^{\sum_{k=1}^{K^{(n)}} \ln(\rho(Z(X_k)))  
		-\int_{\Wcal_n}\rho(Z(x))-1dx}.
\end{equation}

Modelling $\rho$ via a prior distribution $\Pi$ supported on the collection $\Rcal$ of bounded and non-negative-valued functions on $\Zcal$, we obtain by Bayes' formula, e.g., ~\cite[p.7]{GvdV17}, that the posterior of $\rho|(N^{(n)},Z^{(n)})$ is given by
\begin{equation}
	\label{Eq:Post}
	\Pi(R|N^{(n)},Z^{(n)})
	=\frac{\int_R L^{(n)}(\rho)d\Pi(\rho)}
	{\int_\Rcal L^{(n)}(\rho)d\Pi(\rho)},
	\qquad R\subseteq\Rcal\ \text{measurable},
\end{equation}
with $L^{(n)}$ the likelihood from \eqref{Eq:Likelihood}. For priors of interest, we study, in Section \ref{Sec:MainRes} below, the asymptotic behaviour of the posterior distributions in the `increasing domain asymptotics' $n\to\infty$. Posterior inference is illustrated via numerical simulations and real data analyses in Sections \ref{Sec:Simulations} and \ref{Sec:RealData}, respectively.

%

%\begin{remark}[Increasing domain asymptotics]%%%%%%%%%%%%%%%%%%%%%%%%%%%%
%\label{Rem:IncreasingDomains}
%\st{The scenario where a single realisation of $(N^{(n)},Z^{(n)})$ is observed is prototypical in spatial statistics, where in many applications the points and covariates have already been `sampled' at the time of data collection. See e.g., ~the case study from Section} \ref{Sec:RealData}. \st{The increasing domain asymptotics then reflects the practice of enlarging the observation window to gather more information, and is widely adopted in the literature, e.g., }~\cite{J81,J93,Z05}. \st{Other common asymptotics are of `in-fill' type, prescribing an increasing number of points within a fixed domain. The analysis of these requires different assumptions and techniques, and we refer to the recent preprint} \cite{DG25arXiv} \st{for a nonparametric Bayesian analysis of repeated observations of covariate-driven Poisson processes.}
%\end{remark}%%%%%%%%%%%%%%%%%%%%%%%%%%%%%%%%%%%%%%%%%%%%%%%%%%%%%%%%%%%%%

%
%
%
%
%

%%%%%%%%%%%%%%%%%%%%%%%%%%%%%%%%%%%%%%%%%%%%%%%%%%%%%%%%%%%%%%%%%%%%%%%%%
\section{Main results}\label{Sec:MainRes}

In this section, we present our main theoretical results quantifying the speed of posterior concentration under the `frequentist' assumption that the data $(N^{(n)},Z^{(n)})\sim P^{(n)}_{\rho_0}$ have been generated by some fixed ground truth $\rho_0\in\Rcal$. We employ the usual notion of `posterior contraction rates', namely vanishing sequences of positive numbers $\varepsilon_n\to 0$ such that, as $n\to\infty$,
$$
\Enorm_{\rho_0}^{(n)}\left[\Pi\left(\rho : \Delta(\rho,\rho_0)> \varepsilon_n|N^{(n)},Z^{(n)}\right)
\right]\to 0,
$$
where $\Delta$ is a (semi-)metric on $\Rcal$; see \cite[Chapter 8]{GvdV17} for an overview.

%
%
%

%%%%%%%%%%%%%%%%%%%%%%%%%%%%%%%%%%%%%%%%%%%%%%%%%%%%%%%%%%%%%%%%%%%%%%%%%
\subsection{Main notation and function spaces}
\label{Subsec:Notation}

We summarise notation and basic definitions for the main function spaces appearing in our asymptotic analysis. Throughout, we primarily consider covariates with values in $[0,1]^d$, cf.~Condition \ref{Cond:ErgCov}. We write $W^{\alpha,p}([0,1]^d)$, with $1\le p\le\infty$ and $\alpha\in\N$, for the Sobolev space of real-valued functions on $[0,1]^d$ with $p$-integrable (weak) derivatives up to order $\alpha$, and $\|\cdot\|_{W^{\alpha,p}}$ for its norm, cf.~\cite[Chapter 5.2]{E10}. Let $C([0,1]^d)$ be the space of continuous functions, equipped with the supremum norm $\|\cdot\|_{L^\infty}$, and, for $\alpha>0$, let $C^\alpha([0,1]^d)$ be the H\"older space of $\lfloor \alpha \rfloor$-times differentiable functions with $(\alpha - \lfloor \alpha \rfloor)$-H\"older continuous $\lfloor \alpha \rfloor^{\textnormal{th}}$ derivative, with norm $\|\cdot\|_{C^\alpha}$, cf.~\cite[Chapter 5.1]{E10}.

We next define wavelet-based Besov spaces. Let $(\psi_\ell, \ \ell\in\N)$ be a single-index reordering of an orthonormal wavelet basis of $L^2([0,1]^d)$ comprising  $S$-regular, $S\in\N$, compactly supported and boundary corrected Daubechies wavelets, e.g., ~\cite[Appendix A]{LSS09}. For $0\le \alpha< S$ and $1\le p<\infty$, let
$$
B^\alpha_p([0,1]^d)
:= \left\{f=\sum_{\ell=1}^\infty f_\ell \psi_\ell : \| f \|^p_{B^\alpha_p}
:=\sum_{\ell=1}^\infty \ell^{p(\alpha/d+1/2)-1}|f_\ell|^p<\infty \right\}.
$$
As the underlying wavelet basis can be constructed with arbitrarily high regularity, we shall implicitly assume that the condition $\alpha\le S$ be verified throughout. By the wavelet characterisation of Hilbert-Sobolev spaces, e.g., ~\cite[p.~370]{GN16}, for $\alpha\in\N$, it holds that $B^\alpha_2([0,1]^d) = W^{\alpha,2}([0,1]^d)$ with norm equivalence. For $\alpha>0$, we then use the special notation $H^\alpha([0,1]^d) = B^\alpha_2([0,1]^d)$, $\|\cdot\|_{H^\alpha} = \|\cdot\|_{B^\alpha_2}$. When $p<2$, the spaces $B^\alpha_p([0,1]^d)$ are known to contain `spatially inhomogeneous' functions, e.g., ~ones that are predominantly flat but may present localised sharp variations (or even jumps), cf.~\cite[Section 1]{DJ98}. Among these, the most relevant scale is for  $p=1$; for instance, $B^1_1([0,1]^d)$ is closely related to the space of bounded variation functions, e.g., ~\cite[Section 1]{DJ98}, which is widely used in imaging and signal processing.

%
%
%
%
%

%%%%%%%%%%%%%%%%%%%%%%%%%%%%%%%%%%%%%%%%%%%%%%%%%%%%%%%%%%%%%%%%%%%%%%%%%
\subsection{Assumptions on the covariates }\label{Subsec:Covariates}

We adopt two key working conditions on the covariates to drive the required accumulation of information in the increasing domain asymptotics, assuming $Z$ to be stationary and ergodic, cf.~Remark \ref{Rem:ErgCov}.

\begin{condition}\label{Cond:ErgCov}%%%%%%%%%%%%%%%%%%%%%%%%%%%%%%%%%%%%%
	Let the random field $Z$ be stationary, and denote by $\nu_Z$ its stationary distribution (i.e.,~$Z(x)\sim \nu_Z$ for all $x\in\R^D$). Assume that $\nu_Z$ be supported on $\Zcal = [0,1]^d$, and be absolutely continuous with bounded probability density function (p.d.f.). Further, assume that for all $K_1>0$ there exists some $K_2>0$ such that, for any $f\in W^{1,\infty}([0,1]^d)$ satisfying $\|f\|_{W^{1,\infty}}\le K_1$, the following concentration inequality holds for all $t>0$ and $n\in\N$,
	\begin{equation}
		\label{Eq:ConcIneq}
		P_{Z^{(n)}}\left(\left| \frac{1}{\vol(\Wcal_n)}\int_{\Wcal_n} f(Z(x))dx - \int_{[0,1]^d} f(z)d\nu_Z(z) \right| > t \right) \\
		\le e^{-K_2\vol(\Wcal_n)t^2}.
	\end{equation}
\end{condition}%%%%%%%%%%%%%%%%%%%%%%%%%%%%%%%%%%%%%%%%%%%%%%%%%%%%%%%%%%

Stationarity is commonly assumed in spatial statistics, e.g., ~\cite[Chapter 2.3]{C15}, and here implies the (testable, \cite{BS17}) scenario where the distributional properties of the covariates do not vary across the domain. The requirement that $\nu_Z$ be supported on $[0,1]^d$ is merely for concreteness and implies no loss of generality since, if $Z$ took values outside this set, we could transform it through a smooth and invertible map $\Phi:\R^d\to[0,1]^d$, proceed in the analysis with such `pre-processed' covariates, and then convert the results back to the original scale via $\Phi^{-1}$. See Example \ref{Ex:GaussCov} in the Appendix \cite{DG25Suppl} for a concrete construction of $\Phi$. The concentration inequality \eqref{Eq:ConcIneq} is a quantitative version of the `ergodic property' whereby spatial averages of $Z$ over the increasing domain $\Wcal_n$ converge to their expectation. Similar to the recent investigation of \cite{GKR25}, this is used in the proofs to control the covariate-based metric \eqref{Eq:EmpDist} below, which is central to the contraction rate analysis. 

In the Appendix \ref{App:AddMaterial} we present two major classes of random fields satisfying Condition \ref{Cond:ErgCov}, based on stationary and ergodic Gaussian processes and Poisson random tessellations, respectively.

\begin{remark}[Ergodic covariates]%%%%%%%%%%%%%%%%%%%%%%%%
	\label{Rem:ErgCov}
	In the increasing domain asymptotics, the ergodicity assumption enables $Z^{(n)}$ to fully explore the covariate space $[0,1]^d$, and implies that similar covariate values are recorded multiple times across the observation window, which is often realistic in applications, e.g., ~\cite{R00,DVJ03,C15}. Through stationarity, the information carried by distinct sub-regions with similar covariate values can then be combined, allowing consistent inferences. Analogous assumptions underpin the kernel consistency results of \cite{G08}, as well as the posterior contraction rates for various nonparametric Bayesian procedures recently obtained by \cite{GKR25}. 
\end{remark}%%%%%%%%%%%%%%%%%%%%%%%%%%%%%%%%%%%%%%%%%%%%%%%%%%%%%%%%%%%%%

Under Condition \ref{Cond:ErgCov}, the amount of information contained in the data is determined by $\vol(\Wcal_n)\to\infty$. For convenience and notational simplicity, we take $\vol(\Wcal_n) = n$ and work with square observation windows
\begin{align}
	\label{Eq:SpatialWn}
	\Wcal_n=\left[-\frac{1}{2}n^{1/D},\frac{1}{2}n^{1/D}\right]^D.
\end{align}
The posterior contraction rates derived below (expressed in $n$) should then be thought of as being in terms of $\vol(\Wcal_n)$. Extensions to general domains containing increasing `regularly-shaped' sets such as \eqref{Eq:SpatialWn} are straightforward.

%%%%%%%%%%%%%%%%%%%%%%%%%%%%%%%%%%%%%%%%%%%%%%%%%%%%%%%%%%%%%%%%%%%%%%%%%%
\subsection{Posterior contraction rates for Gaussian priors}
\label{Subsec:GPRates}

Gaussian priors are extremely popular across statistics and machine learning \cite{RW06}. They have been successfully employed in non-covariate-based nonparametric Bayesian intensity estimation, e.g., ~\cite{MSW98,AMM09,KvZ15}, and recently also in models with covariates by \cite{GKR25}, in the form of truncated Gaussian wavelet series priors combined with `regular' link functions.

Here, we consider a significantly wider class of Gaussian priors and link functions. Specifically, under Condition \ref{Cond:ErgCov}, we model the unknown covariate-based intensity $\rho:[0,1]^d\to[0,\infty)$ via $n$-dependent priors $\Pi_n$, constructed starting from the following base Gaussian measures.

\begin{condition}\label{Cond:GPCondition}%%%%%%%%%%%%%%%%%%%%%%%%%%%%%%%%
	Let $\Pi$ be a centred Gaussian Borel probability
	measure on $C^1([0,1]^d)$, and assume that, for some $\alpha>d/2$, the reproducing kernel Hilbert space (RKHS) $\Hcal$ of $\Pi$ is equal to $H^{\alpha}([0,1]^d)$, with RKHS norm $\|\cdot\|_\Hcal\simeq \|\cdot\|_{H^\alpha}$. 
\end{condition}%%%%%%%%%%%%%%%%%%%%%%%%%%%%%%%%%%%%%%%%%%%%%%%%%%%%%%%%%%

If $K(x,y)$, $x,y\in [0,1]^d$, is the prior covariance kernel, $\Hcal$ equals the completion of the space of functions $f = \sum_{i=1}^k \alpha_i K(s_i,\cdot)$, $k\in\N$, $\alpha_1,\ldots,\alpha_k \in \R$, $s_1, \ldots ,s_k\in [0,1]^d $, relative to the norm induced by the inner product 
$$
\left\langle \sum_{i=1}^k a_i K(s_i, \cdot), \sum_{j=1}^\ell b_j 
K(t_j, \cdot) \right\rangle_\Hcal 
= \sum_{i=1}^k \sum_{j=1}^\ell a_i b_j K(s_i,t_j).
$$
This characterises the prior information geometry, regulating, under Condition \ref{Cond:GPCondition}, the smoothness (i.e., the degree of differentiability) of the draws from $\Pi$ via the hyper-parameter $\alpha$. See \cite[Chapter 11]{GvdV17} for definitions and an overview on the use of Gaussian processes in Bayesian nonparametrics.

Condition \ref{Cond:GPCondition} includes Gaussian series expansions on orthonormal bases spanning the Sobolev scale (e.g., ~the Fourier and wavelet bases, cf.~\cite[Section 11.4.5]{GvdV17}), as well as the laws of stationary Gaussian processes with algebraically decaying spectral measures (e.g., ~the popular Matérn processes, cf.~\cite[Section 11.4.4]{GvdV17}). See Appendix \ref{App:GPs} for further details.

Given $\Pi$ as in Condition \ref{Cond:GPCondition}, let $W:=(W(z),\ z\in[0,1]^d)\sim \Pi$ and let $\Pi_n$ be the law of the following random function,
\begin{equation}
	\label{Eq:RescaledGP}
	R_n(z) := \eta\left[n^{-\frac{d}{4\alpha+2d}} W(z)\right], \qquad z\in[0,1]^d,
\end{equation}
where $\eta:\R\to[0,\infty)$ is any smooth and strictly increasing link function. The introduction of $\eta$ guarantees that the realisations of $R_n$ take non-negative values. The mild requirement that it be smooth and strictly increasing allows for ample flexibility in its choice; for example, exponential (as in \cite{MSW98}), sigmoid-type (as in \cite{AMM09}) or soft-max-type (e.g., ~as in \cite[Section 3.2.1]{GKR25}) links can be used. The $n$-dependent rescaling of $W$ in \eqref{Eq:RescaledGP} is a technique borrowed from the statistical theory of inverse problems \cite{GN20,N23, AW24}. In the proofs, it implies a high-probability bound for the $C^1$-norm of $R_n$ (cf.~\ref{Subsec:ProofGPRates}), which crucially enables a uniform application of the concentration inequality \eqref{Eq:ConcIneq}. Non-asymptotically, such rescaling involves a mere shrinkage of the covariance kernel of $\Pi$, and does not require further tuning in practice.

\begin{theorem}\label{Theo:GPRates}%%%%%%%%%%%%%%%%%%%%%%%%%%%%%%%%%%%%%
	Let $\rho_0 = \eta\circ w_0$ for some $w_0\in C^\beta([0,1]^d)$ and some $\beta>\max(1,d/2)$. Consider data $(N^{(n)},Z^{(n)})\sim P^{(n)}_{\rho_0}$ from \eqref{Eq:PointProc} with $\rho=\rho_0$ and with $Z$ satisfying Condition \ref{Cond:ErgCov}. Let the rescaled Gaussian prior $\Pi_n$ be given by the law of the random function \eqref{Eq:RescaledGP}, with $\Pi$ satisfying Condition \ref{Cond:GPCondition} for some $d/2<\alpha\le\beta$. Then, for $M>0$ large enough, as $n\to\infty$,
	$$
	\Enorm_{\rho_0}^{(n)}
	\Bigg[\Pi_n\Big(\rho : \|\rho - \rho_0\|_{L^1([0,1]^d,\nu_Z)} 
	> M n^{-\frac{\alpha}{2\alpha+d}}
	\Big| N^{(n)},Z^{(n)}\Big)\Bigg]
	\to 0.
	$$
\end{theorem}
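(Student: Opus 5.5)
Our plan is to follow the general posterior contraction program of \cite{GKR25}, adapted to the covariate-driven likelihood \eqref{Eq:Likelihood} and using the rescaling in \eqref{Eq:RescaledGP}. We first formulate an abstract concentration theorem. Suppose that, for $\varepsilon_n = n^{-\alpha/(2\alpha+d)}$ and sets $\Rcal_n\subseteq\Rcal$, the following hold:
\begin{enumerate}[label=(\roman*)]
\item a small-ball condition $\Pi_n(\rho: \|\rho-\rho_0\|_{L^\infty}\le \varepsilon_n)\ge e^{-C n\varepsilon_n^2}$;
\item $\Pi_n(\Rcal\setminus\Rcal_n)\le e^{-(C+4)n\varepsilon_n^2}$;
\item $\log N(\varepsilon_n, \Rcal_n, \|\cdot\|_{L^\infty})\lesssim n\varepsilon_n^2$;
\item $\Rcal_n$ is contained in a $W^{1,\infty}$-ball of fixed radius $K_1$, and its elements are bounded below away from zero on a set of controlled size, or the Hellinger geometry is used.
\end{enumerate}
Then the posterior contracts at rate $M\varepsilon_n$ in $\|\cdot\|_{L^1(\nu_Z)}$.

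The argument works conditionally on $Z^{(n)}$ with the random empirical semimetric
\[
d_n(\rho,\rho')^2 = n^{-1}\int_{\Wcal_n}\bigl(\sqrt{\rho(Z(x))}-\sqrt{\rho'(Z(x))}\bigr)^2dx .
\]
Since $N^{(n)}|Z^{(n)}$ is Poisson with intensity $\rho(Z(x))$, the classical Hellinger tests for Poisson processes and the KL/variance bounds (see \cite{GKR25}) give contraction in $d_n$ at rate $\varepsilon_n$, on the event where the $L^\infty$ small ball controls the empirical KL neighbourhood. We then transfer from $d_n$ to $L^1(\nu_Z)$ using \eqref{Eq:ConcIneq}, applied to $f=(\sqrt\rho-\sqrt{\rho_0})^2$ or $|\rho-\rho_0|$. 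This transfer must hold uniformly over an $\varepsilon_n$-net of $\Rcal_n$, so we combine a union bound over $e^{Cn\varepsilon_n^2}$ points with the tail $e^{-K_2 n t^2}$ at $t\asymp\varepsilon_n$. This gives $\|\rho-\rho_0\|_{L^1(\nu_Z)}\lesssim d_n(\rho,\rho_0)+\varepsilon_n$ on $\Rcal_n$ with high probability. The $W^{1,\infty}$ bound in (iv) is exactly what makes \eqref{Eq:ConcIneq} applicable with a uniform constant $K_2$.

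To verify the conditions for the rescaled prior, we write $R_n=\eta(n^{-d/(4\alpha+2d)}W)$. The law of $V_n:=n^{-d/(4\alpha+2d)}W$ is Gaussian with RKHS $H^\alpha$ and norm $\|\cdot\|_{\Hcal_n}= n^{d/(4\alpha+2d)}\|\cdot\|_{\Hcal}$, and $n^{d/(2\alpha+d)} = n\varepsilon_n^2$. For (i), we approximate $w_0\in C^\beta$ by $h\in H^\alpha$ with $\|w_0-h\|_\infty\le\varepsilon_n$ and $\|h\|_{H^\alpha}^2\lesssim$ a bounded quantity. This is possible with $h=w_0$ itself when $\alpha\le\beta$, after a suitable extension, since $C^\beta\subset H^\alpha$ on the cube. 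It gives $\|h\|_{\Hcal_n}^2\lesssim n\varepsilon_n^2$. The centred small-ball probability is
\[
\Pi(\|W\|_\infty\le n^{d/(4\alpha+2d)}\varepsilon_n),
\]
and the standard estimate $-\log\Pi(\|W\|_\infty\le\delta)\asymp\delta^{-2d/(2\alpha-d)}$ evaluated at $\delta = n^{d/(4\alpha+2d)-\alpha/(2\alpha+d)} = n^{-(2\alpha-d)/(4\alpha+2d)}$ gives exactly $n^{d/(2\alpha+d)}$. The local Lipschitz property of $\eta$ on bounded sets converts this to $\rho$, since $\|V_n-w_0\|_\infty$ is small. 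For (ii) and (iii), we take the Borell--Sudakov--Tsirelson sieve
\[
\Rcal_n=\eta\bigl(\{v=v_1+v_2:\ \|v_1\|_{C^1}\le\varepsilon_n,\ \|v_2\|_{\Hcal_n}\le M'\sqrt{n}\varepsilon_n\}\bigr).
\]
Here $\|v_2\|_{H^\alpha}\le M'$, so, since $\alpha>d/2$ and through the embedding into $C^1$ (possibly requiring $\alpha>1+d/2$, or else using $\Pi$ being supported on $C^1$ with Fernique), $v$ lies in a fixed $C^1$-ball. This yields (iv), and $\eta$ smooth keeps $\rho$ in a $W^{1,\infty}$-ball. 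The entropy bound in (iii) follows from the metric entropy of Sobolev balls in sup norm, $\log N(\varepsilon,\{\|h\|_{H^\alpha}\le M'\},\|\cdot\|_\infty)\lesssim\varepsilon^{-d/\alpha}$, which at $\varepsilon=\varepsilon_n$ gives $n^{d/(2\alpha+d)}$.

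We expect the main obstacle to be the uniform concentration step together with the $C^1$ control of the sieve. The first term $v_1$ must be small in $C^1$ rather than only in sup norm, which requires a BST inequality in the $C^1$ topology and a small-ball estimate for $\|W\|_{C^1}$ at the scale $n^{d/(4\alpha+2d)}\varepsilon_n$. We would first try using the Gaussian tail $\Pi(\|W\|_{C^1}>t)\le e^{-ct^2}$ at $t\asymp n^{d/(4\alpha+2d)}$, which already gives probability $e^{-cn\varepsilon_n^2}$ with a constant adjustable by $M'$. This suggests simply defining $\Rcal_n$ as the intersection of the BST set with $\{\|V_n\|_{C^1}\le M'\}$, so the rescaling alone secures (iv). A secondary difficulty is that the Poisson KL and test bounds require $\rho$ bounded away from $0$ and $\infty$. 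Both bounds hold on $\Rcal_n$ because $V_n$ is uniformly bounded there and $\eta$ is strictly positive on compacts, which is where strict monotonicity of a positive link is used.
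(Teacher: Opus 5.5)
Your argument is correct in substance. Your verification of the prior conditions is essentially the paper's. For the small-ball condition you decentre at $w_0\in\Hcal$ and apply the Li--Linde centred small-ball bound at level $\sqrt n\varepsilon_n^2$. For the sieve you use a Borell--Sudakov--Tsirelson decomposition intersected with a Fernique tail bound for $\|W\|_{C^1}$ at level $\sqrt n\varepsilon_n$. This is exactly the set $\Ucal_n$ of Lemma 5.2 in \cite{GR22} that the paper invokes; the paper uses $\|u_1\|_{L^2}\le\varepsilon_n$ where you use the sup-norm. Your initial requirement $\|v_1\|_{C^1}\le\varepsilon_n$ would fail, and your final fix is the right one.

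The genuine difference is in the abstract contraction step. You test in the empirical Hellinger semimetric $d_n$ with classical Poisson likelihood-ratio tests. This needs $L^\infty$-entropy of $\Rcal_n$, so that $d_n$-covering numbers are controlled uniformly in $Z^{(n)}$. You then transfer to $L^1(\nu_Z)$ uniformly over $\Rcal_n$ by a union bound over an $L^\infty$-net. The paper instead tests directly in the empirical $L^1$-distance \eqref{Eq:EmpDist}. It covers the alternatives by $L^1(\nu_Z)$-balls, uses the two-point tests of Lemma A.2 in \cite{GKR25}, and handles type-II errors through the $\rho$-specific event $\Omega_\rho$ from \eqref{Eq:ConcIneq}. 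The final passage to $L^1(\nu_Z)$ is also pointwise in $\rho$: it integrates $P_{Z^{(n)}}(\Delta^{(n)}(\rho)>\tilde M\varepsilon_n/2)$ against the prior by Fubini, and absorbs the normalising factor $e^{(c_1+L_1)n\varepsilon_n^2}$ by taking $\tilde M$ large. Since no uniformity in $\rho$ is ever needed, only $L^1$-entropy (bounded here via $L^2$) is required.

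For Gaussian priors with $\alpha>d/2$ the sup-norm entropy bound $\varepsilon^{-d/\alpha}$ is available. So your more classical route works, and it even gives a uniform comparison of empirical and population $L^1$-distances on $\Rcal_n$. The paper's route is what lets a single general theorem also cover Besov--Laplace priors, where $L^\infty$-entropy bounds are too crude.

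One correction to your transfer step: of your two choices of $f$, only $f=|\rho-\rho_0|$ gives the rate. Applying \eqref{Eq:ConcIneq} to $f=(\sqrt\rho-\sqrt{\rho_0})^2$ with deviation $t\asymp\varepsilon_n$ controls the squared Hellinger distance only up to $\varepsilon_n$, hence the distance only up to $\sqrt{\varepsilon_n}$. Reaching $\varepsilon_n$ would need $t\asymp\varepsilon_n^2$, whose tail $e^{-K_2n\varepsilon_n^4}$ cannot absorb the factor $e^{Cn\varepsilon_n^2}$ from the union bound. With $f=|\rho-\rho_0|$ you also need the Cauchy--Schwarz step $n^{-1}\|\lambda^{(n)}_\rho-\lambda^{(n)}_{\rho_0}\|_{L^1(\Wcal_n)}\le 2\max(\|\rho\|_{L^\infty},\|\rho_0\|_{L^\infty})^{1/2}\, d_n(\rho,\rho_0)$. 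This holds because $\vol(\Wcal_n)=n$ and the elements of $\Rcal_n$ are uniformly bounded, and you should state it explicitly.
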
%%%%%%%%%%%%%%%%%%%%%%%%%%%%%%%%%%%%%%%%%%%%%%%%%%%%%%%%%%%%%

This shows that the posterior concentrates over neighbourhoods of the ground truth whose $L^1([0,1]^d,\nu_Z)$-radius shrinks at rate $n^{-\alpha/(2\alpha+d)}$. The hyper-parameter $\alpha$ from Condition \ref{Cond:GPCondition} determines the regularity of the elements of the RKHS $\Hcal$ of the base Gaussian measure $\Pi$; the condition that $\alpha\le \beta$ then amounts to the requirement that $\Hcal$ not `over-smooth' $\rho_0$. Clearly, Theorem \ref{Theo:GPRates} yields the fastest rate when $\alpha = \beta$, namely when the regularity of $\Hcal$ exactly matches the true smoothness. This is in line with the aforementioned inverse problems literature \cite{N23}. In particular, under the concrete examples of covariate random fields from Appendix \ref{App:AddMaterial}, smoothness-matching priors concentrate in standard $L^1$-distance at rate $n^{-\beta/(2\beta+1)}$, which is known to be minimax-optimal for $\beta$-smooth intensities, e.g., ~\cite[Chapter 6.2]{K98}.

The proof of Theorem \ref{Theo:GPRates} is based on a refinement of recent results by \cite{GKR25}, which establish a general program to derive posterior contraction rates in the increasing domain asymptotics; see Theorem \ref{Theo:GenTheo}. In fact, the argument shows that $\Pi(\cdot|N^{(n)},Z^{(n)})$ achieves the same rate also in the `empirical' $L^1$-distance
\begin{equation}
	\label{Eq:EmpDist}
	\frac{1}{n}\|\lambda_\rho^{(n)} - \lambda_{\rho_0}^{(n)}\|_{L^1(\Wcal_n)}
	=\frac{1}{|\Wcal_n|}\int_{\Wcal_n}|\rho(Z(x))-\rho_0(Z(x))|dx,
\end{equation}
which is related to the $L^1([0,1]^d,\nu_Z)$-metric via the concentration inequality \eqref{Eq:ConcIneq}. This implies that the considered rescaled Gaussian priors also solve the `prediction' problem of estimating the spatial intensity function $\lambda_\rho^{(n)}$ in \eqref{Eq:PointProc}.

%
%
%

%%%%%%%%%%%%%%%%%%%%%%%%%%%%%%%%%%%%%%%%%%%%%%%%%%%%%%%%%%%%%%%%%%%%%%%%%
\subsection{Posterior contraction rates for Besov-Laplace priors}
\label{Subsec:LaplRates}

The second class of priors we consider are the Besov-Laplace ones, which are popular in imaging and inverse problems due to their capability of reconstructing spatially inhomogeneous functions \cite{LSS09,DHS12,KLNS12,HB15,ADH21}. In the present setting, they represent a natural prior model for intensities that exhibit sharp variations corresponding to specific `critical' covariate values; see Section \ref{Sec:RealData} and Appendix \ref{Suppl:RealData} for illustrations with real data.

Under Condition \ref{Cond:ErgCov}, we construct rescaled Besov-Laplace priors for the function $\rho:[0,1]^d\to[0,\infty)$ as follows. Let $(\psi_\ell, \ \ell\in\N)$ be a (single-index reordering of an) orthonormal wavelet basis of $L^2([0,1]^d)$, cf.~Section \ref{Subsec:Notation} for details. For any $\alpha>d$, let $\Pi$ be a random wavelet series prior given by
\begin{equation}
	\label{Eq:BaseLapalPrior}
	W\sim \Pi,
	\quad
	W(z) = \sum_{\ell=1}^\infty \ell^{-(\frac{\alpha}{d}-\frac{1}{2})} w_\ell \psi_\ell(z), 
	\quad z\in[0,1]^d,
	\quad w_\ell\iid \text{Laplace},
\end{equation}
where the Laplace (or double exponential) distribution on $\R$ has p.d.f.~proportional to $e^{-|t|/2}$, $t\in\R$. In the language of \cite{LSS09}, $\Pi$ is called a `$B^\alpha_{11}$-prior', while \cite{ADH21} terms it an `$(\alpha-d)$-regular Laplace prior' since  $\Pi(B^\gamma_{p}) = 1$ for all $\gamma<\alpha-d$ and all $p\in[1,\infty]$, cf.~\cite[Lemma 5.2]{ADH21}. Similar to Section \ref{Subsec:GPRates}, we transform $W\sim \Pi$ from \eqref{Eq:BaseLapalPrior} via a suitable $n$-dependent rescaling and the application of a smooth and strictly increasing link function $\eta:\R\to[0,\infty)$,
\begin{equation}
	\label{Eq:RescaledLaplPrior}
	R_n(z) 
	:= \eta\left[n^{-\frac{d}{2\alpha+d}} W(z)\right], \qquad z\in[0,1]^d,
\end{equation}
denoting by $\Pi_n$ the law of $R_n$. Note that the rescaling in  \eqref{Eq:RescaledLaplPrior} is of different order compared to the one  for Gaussian priors in \eqref{Eq:RescaledGP}. In accordance with the existing asymptotic theory of Besov priors \cite{ADH21}, this turns out to be the correct shrinkage to achieve optimal posterior contraction rates over Besov spaces containing spatially inhomogeneous functions, as the next result shows.

\begin{theorem}\label{Theo:LaplRates}%%%%%%%%%%%%%%%%%%%%%%%%%%%%%%%%%%%%%
	Let $\rho_0 = \eta\circ w_0$ for some $w_0\in B_1^\beta([0,1]^d)$ and some $\beta>1+d$. Consider data $(N^{(n)},Z^{(n)})\sim P^{(n)}_{\rho_0}$ from \eqref{Eq:PointProc} with $\rho=\rho_0$ and with $Z$ satisfying Condition \ref{Cond:ErgCov}. Let the rescaled Besov-Laplace prior $\Pi_n$ be given by the law of the random function \eqref{Eq:RescaledLaplPrior}, with $W$ as in \eqref{Eq:BaseLapalPrior} for some $1+d<\alpha\le\beta$. Then, for $M>0$ large enough, as $n\to\infty$,
	$$
	\Enorm_{\rho_0}^{(n)}
	\Bigg[\Pi_n\Big(\rho : \|\rho - \rho_0\|_{L^1([0,1]^d,\nu_Z)} 
	> M n^{-\frac{\alpha}{2\alpha+d}}
	\Big| N^{(n)},Z^{(n)}\Big)\Bigg]
	\to 0.
	$$
\end{theorem}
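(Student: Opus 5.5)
The argument follows the proof of Theorem~\ref{Theo:GPRates}: we verify the conditions of the general concentration theorem (Theorem~\ref{Theo:GenTheo}, the refinement of \cite{GKR25}). Set $\varepsilon_n = n^{-\alpha/(2\alpha+d)}$. That theorem requires three things: a small-ball condition in the Kullback--Leibler-type neighbourhoods of $\rho_0$ (which, for bounded intensities bounded away from zero on bounded covariates, reduce to sup-norm neighbourhoods), a sieve $\Rcal_n$ of complementary prior mass $e^{-Cn\varepsilon_n^2}$, and a metric entropy bound on $\Rcal_n$. It also needs uniform control of $\Rcal_n$ in $W^{1,\infty}$, so that the concentration inequality \eqref{Eq:ConcIneq} can convert the empirical distance \eqref{Eq:EmpDist} into $\|\cdot\|_{L^1(\nu_Z)}$ uniformly over a finite net.

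\emph{Small ball.} Write $\rho=\eta(n^{-d/(2\alpha+d)}W)$ and $w_0^{(n)} := n^{d/(2\alpha+d)}w_0$. Since $\eta$ is smooth and strictly increasing and $\|w_0\|_\infty<\infty$, $\eta$ is locally Lipschitz with derivative bounded below near the range of $w_0$. Hence $\|\rho-\rho_0\|_\infty\lesssim \varepsilon_n$ follows from $\|W - w_0^{(n)}\|_{\infty}\lesssim n^{d/(2\alpha+d)}\varepsilon_n$. We lower bound $\Pi(\|W-w_0^{(n)}\|_{B^{0}_{\infty,1}}\le \delta_n)$, with $\delta_n := n^{d/(2\alpha+d)}\varepsilon_n$, using the Laplace shifted small-ball inequality from \cite{ADH21}:
\[
\Pi(\|W-h\|\le \delta)\ge e^{-\|h\|_{\Zcal}}\,\Pi(\|W\|\le\delta),\qquad \|h\|_{\Zcal}=\|h\|_{B^\alpha_{1}}.
\]
Here $\|w_0^{(n)}\|_{B^\alpha_1}\lesssim n^{d/(2\alpha+d)} = n\varepsilon_n^2$, using $w_0\in B^\beta_1\subset B^\alpha_1$ for $\alpha\le\beta$. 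The centred small-ball exponent in sup-type norm is of order $\delta^{-d/(\alpha-d)}$ up to logs. This term needs checking: its bound must be $\lesssim n\varepsilon_n^2$. If it is not, I would instead use a truncation argument, approximating $w_0$ by its wavelet projection at resolution $2^{J}\simeq n^{1/(2\alpha+d)}$, which is the standard \cite{ADH21} route, with $\beta>1+d$ ensuring enough sup-norm approximation.

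\emph{Sieve and $C^1$ control.} This is the main obstacle. Laplace priors are not Gaussian, so Borell's inequality is replaced by Talagrand's two-level concentration for exponential measures. This gives sets of the form
\[
\Wcal_n = \{ w = h + g : \|h\|_{B^\alpha_1}\le M n\varepsilon_n^2,\ \|h\|_{B^\alpha_2}\le \sqrt{M n}\,\varepsilon_n \cdot n^{d/(2(2\alpha+d))}\text{-type},\ \|g\|\le \delta_n\},
\]
with $\Pi(\Wcal_n^c)\le e^{-Cn\varepsilon_n^2}$, following \cite{ADH21,G23,AW24}. I would then show that the rescaled functions $n^{-d/(2\alpha+d)}w$, for $w\in\Wcal_n$, are bounded in $W^{1,\infty}$. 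For the $h$-part, the embedding $B^\alpha_1\subset B^{\alpha-d}_{\infty}\subset C^1$ holds since $\alpha>1+d$, and after rescaling the norm is $O(1)$. For the $g$-part, I would take the small-ball norm in Talagrand's inequality to be a $B^{1}_{\infty,1}$-type norm (still of exponent $\lesssim n\varepsilon_n^2$ because $\alpha-d>1$), so $g$ is also $O(1)$ in $C^1$ after rescaling. The sieve is then $\Rcal_n=\eta(n^{-d/(2\alpha+d)}\Wcal_n)$, which lies in a fixed $W^{1,\infty}$-ball. This allows Condition~\ref{Cond:ErgCov} to be applied with a fixed $K_1$, together with a union bound over a net.

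\emph{Entropy and conclusion.} The $L^\infty$ entropy of $\Rcal_n$ at scale $\varepsilon_n$ is bounded through that of the $B^\alpha_1$-ball plus the $\delta_n$-enlargement. Using the Lipschitz property of $\eta$ on bounded sets and the known entropy of Besov balls, $\log N\lesssim (n\varepsilon_n^2/\delta_n)^{d/\alpha}\lesssim n\varepsilon_n^2$. Theorem~\ref{Theo:GenTheo} then yields contraction in the empirical $L^1$ distance. The concentration inequality, applied uniformly over a $\varepsilon_n$-net of $\Rcal_n$ with $e^{-K_2 n\varepsilon_n^2 M^2}$ beating the net cardinality, transfers this to $\|\rho-\rho_0\|_{L^1(\nu_Z)}>M\varepsilon_n$.
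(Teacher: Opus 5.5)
Your architecture is the paper's. You verify the hypotheses of Theorem~\ref{Theo:GenTheo} with $\varepsilon_n=n^{-\alpha/(2\alpha+d)}$, use de-centring plus a centred sup-norm small ball for \eqref{Eq:SmallBall}, and build a Talagrand-type sieve as in Lemma~9 of \cite{G23}. The small-ball check you left open closes exactly, with no logarithms and no truncation: for $\delta_n=n\varepsilon_n^3=n^{-(\alpha-d)/(2\alpha+d)}$ one has $\delta_n^{-d/(\alpha-d)}=n^{d/(2\alpha+d)}=n\varepsilon_n^2$.

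\textbf{The gap.} It sits in the step you call the main obstacle. Your small component $g$ has to do two jobs. It must be of size $\delta_n$ (i.e.\ $\varepsilon_n$ after rescaling) in the norm used for the entropy count. It must also be bounded in $C^1$ after rescaling, i.e.\ $\|g\|_{C^1}\lesssim n\varepsilon_n^2$. Suppose you make the set $A$ in Talagrand's inequality a $B^1_{\infty 1}$-type ball of radius $\delta_n$. Its centred small-ball exponent is then $\delta_n^{-d/(\alpha-d-1)}=(n\varepsilon_n^2)^{(\alpha-d)/(\alpha-d-1)}\gg n\varepsilon_n^2$. The assumption $\alpha-d>1$ only makes this exponent finite, not of order $n\varepsilon_n^2$, because measuring one extra derivative lowers the effective regularity from $\alpha-d$ to $\alpha-d-1$. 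Consequently $\Pi(A)^{-1}e^{-r/K}$ is small only for $r\gg n\varepsilon_n^2$, and the entropy bound is lost. If you instead take radius $\asymp n\varepsilon_n^2$, then $\Pi(A)$ is harmless, but $g$ is no longer $\varepsilon_n$-small after rescaling. Your count ``$B^\alpha_1$-ball plus $\delta_n$-enlargement'' then fails.

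\textbf{The fix.} Decouple the two jobs: $C^1$-boundedness is a large-deviation event, not a small-ball event. Keep $A$ a sup-norm (or, as in the paper, $L^2$) ball of radius $\delta_n$, and intersect the sieve with $\{w:\|w\|_{C^1}\le c\,n\varepsilon_n^2\}$. Since $\alpha>1+d$, $\|W\|_{C^1}$ is a.s.\ finite with exponential tails (e.g.\ by Talagrand's inequality again). Hence $\Pi(\|W\|_{C^1}>c\,n\varepsilon_n^2)\le e^{-c'n\varepsilon_n^2}$, with $c'$ as large as needed for $c$ large. This is precisely the constraint $\|u\|_{C^1}\le c_2$ built into the paper's $\Ucal_n$.

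\textbf{Smaller inaccuracies.} Talagrand's inequality gives $W\in A+\sqrt r\,\mathcal{B}_2+r\,\mathcal{B}_1$. Here $\mathcal{B}_1$ is the unit $B^\alpha_1$-ball and $\mathcal{B}_2$ is the unit ball of $H^{\alpha-d/2}$, not a $B^\alpha_2$-ball of radius $\asymp n\varepsilon_n^2$. The $\sqrt r\,\mathcal{B}_2$ piece is not contained in a $B^\alpha_1$-ball of radius $\lesssim n\varepsilon_n^2$, so your entropy count omits it. There are two ways to handle it.
\begin{itemize}
\item Split at frequency $\ell\simeq n\varepsilon_n^2$. Low frequencies go into the $B^\alpha_1$-ball by Cauchy--Schwarz, and high frequencies are $L^2$-small of order $\delta_n$. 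This is why the paper measures the small component in $L^2$ and needs only $L^1$-entropy in Theorem~\ref{Theo:GenTheo}.
\item Bound its sup-norm entropy directly: $(\sqrt r/\delta_n)^{d/(\alpha-d/2)}\asymp n\varepsilon_n^2$. Its $C^1$-control after rescaling then follows from $H^{\alpha-d/2}\subset C^1$.
\end{itemize}
Finally, your last step is redundant. Theorem~\ref{Theo:GenTheo} already gives contraction in $L^1([0,1]^d,\nu_Z)$, since the transfer from the empirical distance is part of its proof.
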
%%%%%%%%%%%%%%%%%%%%%%%%%%%%%%%%%%%%%%%%%%%%%%%%%%%%%%%%%%%%%

Theorem \ref{Theo:LaplRates} states that the posterior concentrates around (possibly) spatially inhomogeneous ground truths $\rho_0\in B^\beta_1([0,1]^d)$ at rate $n^{-\alpha/(2\alpha+d)}$. When $\alpha = \beta$, i.e.,~when the prior regularity matches the true smoothness, this retrieves the minimax-optimal rate, e.g., ~\cite[Chapter 10]{HKPT12}. In contrast, procedures based on Gaussian priors (in fact, all linear methods) have been shown to perform sub-optimally in the presence of spatial inhomogeneities even in the simpler white noise model \cite[Theorem 4.1]{AW24} (see also \cite[Theorem 1]{DJ98}). Therefore, under the assumptions of Theorem \ref{Theo:LaplRates}, they cannot generally be expected to achieve the minimax rates, regardless of their tuning.

%
%
%
%
%

%%%%%%%%%%%%%%%%%%%%%%%%%%%%%%%%%%%%%%%%%%%%%%%%%%%%%%%%%%%%%%%%%%%%%%%%%%
\section{Simulation studies}\label{Sec:Simulations}

We conducted several simulation studies to test the considered methods in practice. On increasing bidimensional square domains as in \eqref{Eq:SpatialWn}, we considered scenarios with one and two covariates, both with spatially homogeneous and inhomogeneous ground truths. Here, we start by presenting the results for the univariate case, deferring the rest to Appendix \ref{Suppl:AddSimul}.

%
%
%

%%%%%%%%%%%%%%%%%%%%%%%%%%%%%%%%%%%%%%%%%%%%%%%%%%%%%%%%%%%%%%%%%%%%%%%%%%
\subsection{Experiments with univariate covariates; homogeneous ground truth}
\label{Subsec:1DHom}

We take $Z$ as a (centred and unit variance) square-exponential Gaussian process with length scale $0.5$, e.g., ~\cite[p.~84]{RW06}, transformed via the standard normal c.d.f.~in accordance with Example \ref{Ex:GaussCov}. We set the ground truth to
\begin{equation}
	\label{Eq:1DTruth}
	\rho_0(z) = 100 f_{SN}(z; 0.8, 0.3 ,-5),
	\qquad z\in[0,1],
\end{equation}
with $f_{SN}$ the skew-normal p.d.f., see Figure \ref{Fig:1DEstim} below. The resulting point patterns concentrate in the regions with covariate values near $0.65$, with around $100$ expected points per unit area; see Figure \ref{Fig:1DObs}.

\begin{figure}[H]%%%%%%%%%%%%%%%%%%%%%%%%%%%%%%%%%%%%%%%%%%%%%%%%%%%%%%%%
	\centering
	\includegraphics[width=\linewidth]{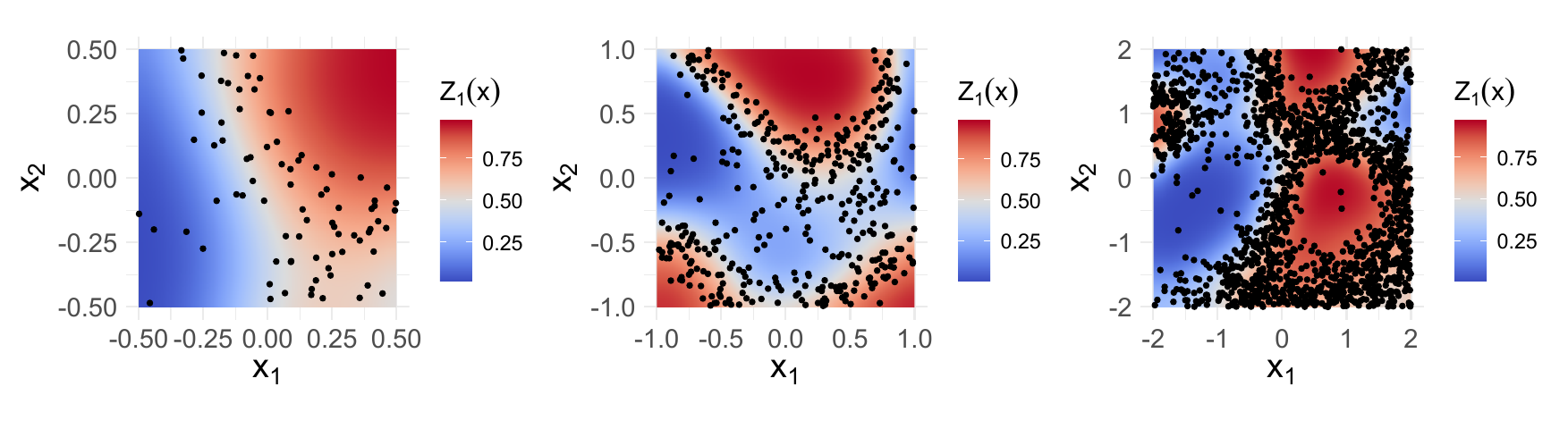}
	\caption{Three realizations of the centred and unit variance square-exponential Gaussian covariate process with length scale $0.5$, mapped to $[0,1]$ via the standard normal c.d.f.. Panels refer to an increasing domain scenario, where $\Wcal_n = [-\sqrt n/2,\sqrt n/2]^2$ for $n = \{ 1,4,16\}$, respectively. Black points represent realizations of the point process with ground truth \eqref{Eq:1DTruth}. Their expected number is $100$ times each window's volume and they concentrate in regions with covariate values near $0.65$.}
	\label{Fig:1DObs}
\end{figure}%%%%%%%%%%%%%%%%%%%%%%%%%%%%%%%%%%%%%%%%%%%%%%%%%%%%%%%%%%%%%%

For each $n$, we sample $Z^{(n)}$ by discretising it over a pre-specified grid on $\Wcal_n$, as described for example in \cite[p.~670]{MOLL07}. Given the covariates, the points are drawn using the implementation in the $\texttt{R}$ package $\texttt{spatstat}$ \cite{BRT16} of the `thinning' procedure from \cite[Section 2.3]{AMM09}. We then perform posterior inference with Gaussian wavelet series (e.g.,~Example \ref{Ex:GPWav} in the Appendix) and Besov-Laplace priors (cf.~Section \ref{Subsec:LaplRates}), combined with the exponential link, employing ad-hoc dimension-robust MCMC algorithms to draw approximate samples from the posterior distributions; see Appendix \ref{App:Algorithms} for details.

In Figure \ref{Fig:1DEstim}, we plot the obtained (MCMC approximations to the) posterior means $\hat\rho_\Pi^{(n)} :=\Enorm^\Pi[\rho|N^{(n)},Z^{(n)}]$, for $n=4, 16, 256$, alongside the associated point-wise $95\%$-credible intervals. For comparison, we also include nonparametric kernel estimates $\hat\rho^{(n)}_\kappa$ produced by the built-in routine in $\texttt{spatstat}$. Consistency results for related methods in the increasing domain asymptotics were proved by \cite{G08}. Table \ref{Tab:1DEstim} reports the (numerically approximated) relative $L^1$-estimation errors, averaged across 50 replications of each experiment. In line with the theory, the empirical results display a clear convergence towards the ground truth as $n$ increases. The performances for Gaussian and Besov-Laplace priors were very close, as expected in view of the spatial homogeneity of $\rho_0$ in \eqref{Eq:1DTruth}. The kernel method also achieved similar estimation errors, with only marginally worse performances for the larger areas.

\begin{figure}[H]%%%%%%%%%%%%%%%%%%%%%%%%%%%%%%%%%%%%%%%%%%%%%%%%%%%%%%%%%
	\centering
	\includegraphics[width=\linewidth]{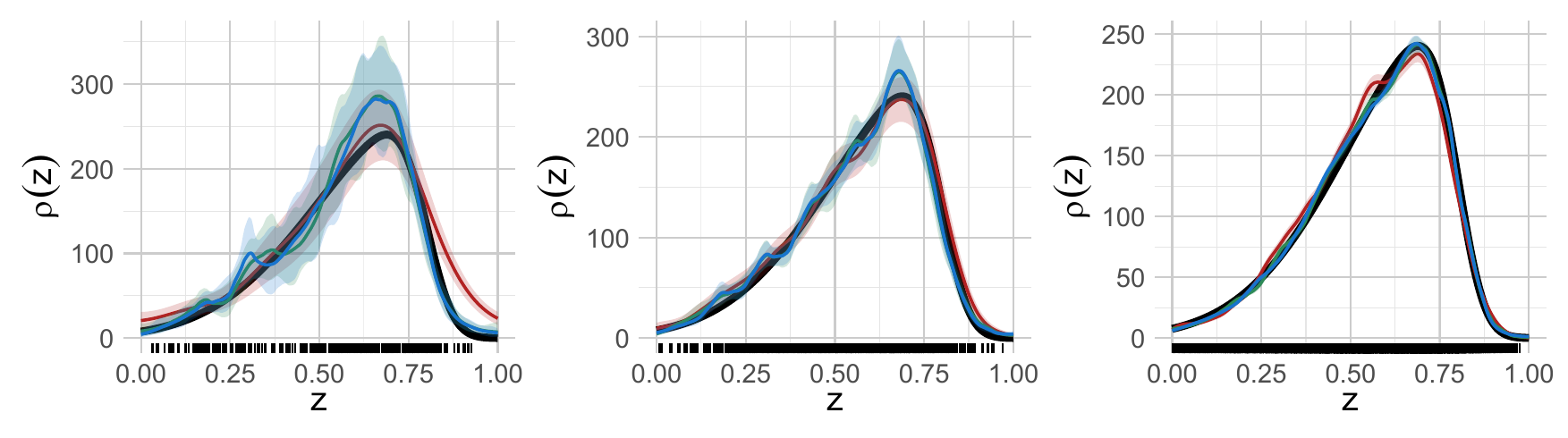}
	\caption{Left to right: Posterior means for Gaussian (solid green) and Besov-Laplace (solid blue) priors, pointwise $95\%$-credible intervals (shaded regions) and kernel estimates (solid red) for $n = \{ 4, 16, 256\}$, respectively. The ground truth $\rho_0$ from \eqref{Eq:1DTruth} is shown in solid black. Rugs at the bottom represent the covariate values at the point locations.}
	\label{Fig:1DEstim}
\end{figure}%%%%%%%%%%%%%%%%%%%%%%%%%%%%%%%%%%%%%%%%%%%%%%%%%%%%%%%%%%%%%%

\begin{table}[H]%%%%%%%%%%%%%%%%%%%%%%%%%%%%%%%%%%%%%%%%%%%%%%%%%%%%%%%%%%
	\caption{Relative $L^1$-estimation errors 
	$\|\hat\rho - \rho_0\|_{L^1}/\|\rho_0\|_{L^1}$ 
	for posterior mean estimators obtained from Gaussian and Besov--Laplace priors, and for the kernel estimator $\hat\rho_\kappa^{(n)}$, averaged over $50$ repeated experiments; standard deviations are reported in parentheses. The Gaussian and Besov--Laplace priors employ the oracle-tuned regularity parameters $\alpha=1.5$ and $\alpha=1$, respectively, selected via grid-search over $\alpha\in[0.05,5]$. For reference, $\|\rho_0\|_{L^1}=99.23$.}
	\centering
	\hrule
	\begin{tabular}{cccc}
		\makecell{\\
			$n$} & 
		\makecell{Gaussian ($\alpha = 0.5$)\\
			$\frac{\|\hat\rho_\Pi^{(n)} - \rho_0\|_{L^1}}{\|\rho_0\|_{L^1}}$}
		& 
		\makecell{Laplace ($\alpha = 0.5$)\\
			$\frac{\|\hat\rho_\Pi^{(n)} - \rho_0\|_{L^1}}{\|\rho_0\|_{L^1}}$}
		& \makecell{\\
			$\frac{\|\hat\rho_\kappa^{(n)} - \rho_0\|_{L^1}}{\|\rho_0\|_{L^1}}$} \\
		1
		& 0.25 (0.06) & 
		0.32 (0.04) & 
		0.21 (0.06) \\    
		%\hline
		4
		& 0.18 (0.06) & 
		0.16 (0.03) &
		0.15 (0.05) \\    
		%\hline
		16 &  
		0.08 (0.03) &  
		0.09 (0.02) & 
		0.10 (0.03) \\
		64 &  
		0.04 (0.01) &  
		0.03 (0.01) & 
		0.06 (0.01) \\
		256 &  
		0.03 (0.005) &  
		0.02 (0.005) & 
		0.06 (0.01) \\
	\end{tabular}
	\hrule
	\label{Tab:1DEstim}
\end{table}%%%%%%%%%%%%%%%%%%%%%%%%%%%%%%%%%%%%%%%%%%%%%%%%%%%%%%%%%%%%%%%

Concerning the wavelet basis, we used the Daubechies-8 maximally symmetric 
(i.e.,\\`Symmelet-8') functions, with symmetric boundary reflection, implemented in the $\texttt{R}$ package $\texttt{wavethresh}$. The series were truncated after the first $1,024$ terms, sufficient to ensure that the approximation errors are negligible compared to the statistical ones in all considered experimental settings. With these specifications, MCMC running times ranged from 5 to 18 minutes on an Intel(R) Core(TM) i7-10875H 2.30GHz processor with 32 GB of RAM, with longer executions for the experiments over larger domains. Each MCMC run was  initialised at cold (constant) starts and iterated for 25,000 steps,  with 10,000 discarded initial samples as the burn-in. The step-sizes for the MCMC algorithms were tuned in order to obtain a stabilisation of the acceptance probabilities between 20\% and 30\% after burn-in.

To select the prior regularity parameters in \eqref{Eq:GPWav} and \eqref{Eq:BaseLapalPrior}, we  tested a wide range of equally-spaced choices $\alpha\in[0.05,5]$; the results in Table \ref{Tab:1DEstim} refer to the values yielding the best reconstructions (which is reported at the top). We note that this is an oracle tuning choice unfeasible in practice, here employed for the purpose of illustration. Furthermore, while our theoretical results require the prior regularity to be greater than a certain minimal value (respectively, $\alpha>d/2$ for Gaussian priors and $\alpha>1+d$ for Besov-Laplace priors), the grid-search was extended below these thresholds.  The `optimal' regularity $\alpha =1$ selected for the Besov-Laplace priors hints at the applicability of Theorem \ref{Theo:LaplRates} even in low-regularity scenarios, even though such extension is technically challenging. We also acknowledge that the slight discrepancy between the effect of the smoothness in the theoretical results and the simulation studies may be due to finite-sample effects and discretisation errors within the employed posterior sampling algorithms.

For real-world scenarios, where the above oracle tuning is unavailable, our findings suggest to adopt a conservative choice of $\alpha$ close to the lower thresholds reported in Theorems \ref{Theo:GPRates} and \ref{Theo:LaplRates}, which guarantees consistency (albeit at the cost of a possibly sub-optimal contraction rate). An attractive alternative is to select $\alpha$ in a data-driven way, either by randomising it within a hierarchical Bayesian model, e.g., ~\cite[Chapter 11.6]{GvdV17}, or by replacing it with a preliminary estimate $\hat \alpha$ following the empirical Bayes approach, e.g., ~\cite{Rousseau2017asymptotic}. We provide a brief exploration of the former strategy in Section \ref{Subsec:Adaptive} below, where we consider hierarchical Gaussian and Besov-Laplace priors obtained by assigning to $\alpha$ a standard non-informative exponential hyper-prior.

%
%
%

%%%%%%%%%%%%%%%%%%%%%%%%%%%%%%%%%%%%%%%%%%%%%%%%%%%%%%%%%%%%%%%%%%%%%%%%%%
\subsection{Experiments with univariate covariates; inhomogeneous ground truth}\label{Subsec:1DInhom}

We next consider a spatially inhomogeneous ground truth $\rho_0$ modelled after the `Blocks' function introduced in \cite{DJ94}, suitably modified and rescaled to verify the positivity constraint and to enforce an expected number of points per unit area equal to 100. Specifically,
\begin{equation}
	\rho_0(z) = \frac{100}{1.64} \left(1.01 +\sum_{m=1}^6 h_m \frac{1+ \sgn(z - z_m)}{2} \right),
	\qquad z\in[0,1],
	\label{Eq:1D_block}
\end{equation}
with $h\in\{3,-4,3.1,-2.2,3.1,-3\}$ the vector of block amplitudes and $z\in\{0.1,0.15,0.25,0.40,0.71,0.81\}$ the block locations, cf., Figure \ref{Fig:1d_block}.

With this setup, we proceed performing analogous numerical experiments to the ones described in Section \ref{Subsec:1DHom}, implementing posterior inference with Gaussian wavelet series and Besov-Laplace priors, truncating the expansions at level $1, 024$, and selecting the prior regularity parameters after testing the values from an equally-spaced grid $\alpha \in [0.05, 5]$.

Figure \ref{Fig:1d_block} displays the obtained posterior means and pointwise $95\%$ credible intervals for increasing sample sizes $n=\{4,16,256\}$. Kernel estimates are also included. The relative $L^1$-estimation errors, averaged over 50 replications of each experiment, are reported in Table \ref{Tab:1D_block}. Here, kernel estimates evidently oversmooth the ground truth, and their associated estimation errors only marginally improve for larger $n$. From a comparison with the excellent reconstructions of the spatially homogeneous ground truth from Section \ref{Subsec:1DHom}, this hints at the known sub-optimality of linear procedures in the presence of spatial inhomogeneities \cite{DJ98}, and also at the possibly unfavourable behaviour in this setting of the automatic bandwidth selection mechanism implemented in \texttt{spatstat}, which follows Silverman’s rule-of-thumb \cite{S86}. In contrast, the performance of the Gaussian and Besov-Laplace priors is clearly superior, with a progressively more precise detection of the blocky-structure of $\rho_0$ that takes advantage of the underlying wavelet structure and its localisation properties. Between these, the Besov-Laplace priors achieves lower estimation errors across all sample sizes. This is in line with the minimax-optimal posterior contraction rates for Besov-regular ground truths established in Theorem \ref{Theo:LaplRates}, which Gaussian priors are expected to be unable to match \cite{AW24}.

\begin{figure}[H]%%%%%%%%%%%%%%%%%%%%%%%%%%%%%%%%%%%%%%%%%%%%%%%%%%%%%%%%%%
	\centering
	\includegraphics[width=\linewidth]{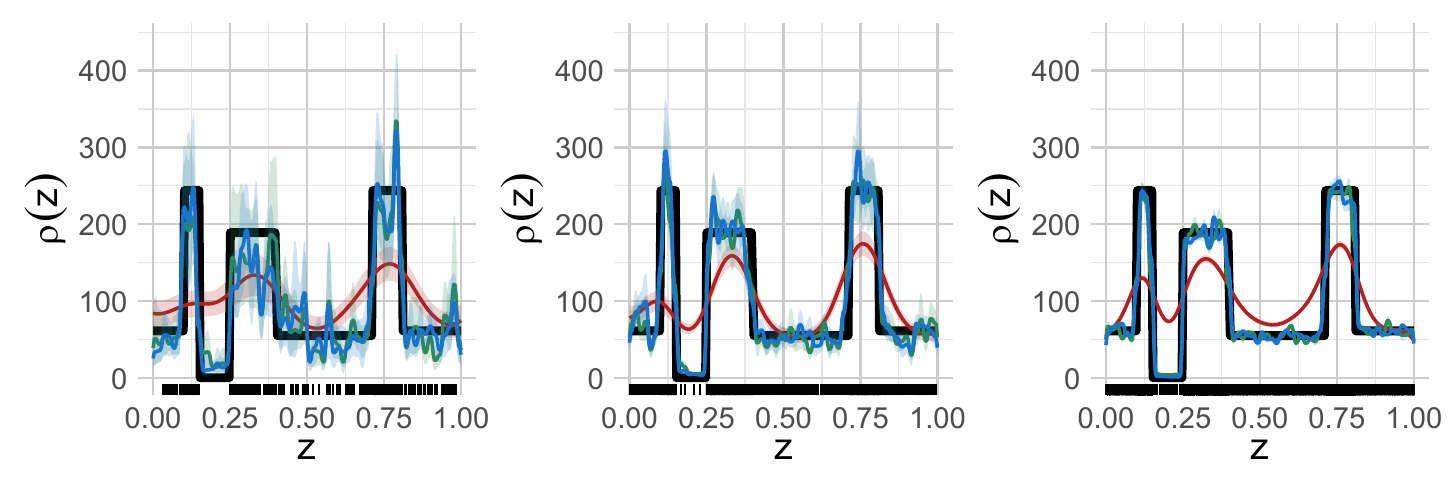}
	\caption{Left to right: Posterior means for Gaussian (solid green) and Laplace (solid blue) priors, pointwise $95\%$-credible intervals (shaded regions) and kernel estimates (solid red) for $n = \{ 4, 16, 256\}$, respectively. The ground truth $\rho_0$ from \eqref{Eq:1D_block} is shown in solid black.}
	\label{Fig:1d_block}
\end{figure}%%%%%%%%%%%%%%%%%%%%%%%%%%%%%%%%%%%%%%%%%%%%%%%%%%%%%%%%%%%%%%%

\begin{table}[H]%%%%%%%%%%%%%%%%%%%%%%%%%%%%%%%%%%%%%%%%%%%%%%%%%%%%%%%%%%%
\caption{Relative $L^1$-estimation errors 
	$\|\hat\rho-\rho_0\|_{L^1}/\|\rho_0\|_{L^1}$ 
	for posterior mean estimators obtained from Gaussian and Besov--Laplace priors, and for the kernel estimator $\hat\rho_\kappa^{(n)}$, averaged over $50$ repeated experiments; standard deviations are reported in parentheses. Both the Gaussian and Besov--Laplace priors employ the oracle-tuned regularity parameter $\alpha=0.5$, selected via grid-search over $\alpha\in[0.05,5]$. For reference, $\|\rho_0\|_{L^1}=100.96$.}
	\hrule
	\centering
	\begin{tabular}{cccc}
		\makecell{\\
			$n$} & 
		\makecell{Gaussian ($\alpha = 0.5$)\\
			$\frac{\|\hat\rho_\Pi^{(n)} - \rho_0\|_{L^1}}{\|\rho_0\|_{L^1}}$}
		& 
		\makecell{Laplace ($\alpha = 0.5$)\\
			$\frac{\|\hat\rho_\Pi^{(n)} - \rho_0\|_{L^1}}{\|\rho_0\|_{L^1}}$}
		& \makecell{\\
		$\frac{\|\hat\rho_\kappa^{(n)} - \rho_0\|_{L^1}}{\|\rho_0\|_{L^1}}$} \\

		1   & 0.67 (0.04) & 0.64 (0.03) & 0.59 (0.02) \\
		4   & 0.51 (0.03) & 0.48 (0.02) & 0.54 (0.02) \\
		16  & 0.33 (0.03) & 0.30 (0.04) & 0.48 (0.02) \\
		64  & 0.20 (0.01) & 0.15 (0.01) & 0.43 (0.01) \\
		256 & 0.12 (0.01) & 0.10 (0.02) & 0.45 (0.01) \\
	\end{tabular}
	\hrule
	\label{Tab:1D_block}
\end{table}

%
%
%
%

%%%%%%%%%%%%%%%%%%%%%%%%%%%%%%%%%%%%%%%%%%%%%%%%%%%%%%%%%%%%%%%%%%%%%%%%%%
\subsection{Hierarchical Gaussian and Besov-Laplace priors}
\label{Subsec:Adaptive}

As remarked after the statements of Theorems \ref{Theo:GPRates} and \ref{Theo:LaplRates}, and in the discussion of the empirical results in Section \ref{Subsec:1DHom}, the performance of the considered Gaussian and Besov-Laplace priors is heavily influenced by the choice of the regularity hyper-parameter $\alpha$. In the Bayesian approach, a principle strategy to tackle the selection of hyper-parameters is via hierarchical prior models, where the hyper-parameters are themselves treated as being random and assigned suitable hyper-priors, e.g., ~\cite[Chapter 11.6]{GvdV17}.

Here, we report the empirical performances, under the same experimental settings of Sections \ref{Subsec:1DHom} and \ref{Subsec:1DInhom} (and Appendix \ref{Suppl:AddSimul}), of hierarchical extensions of the Gaussian wavelet series (cf.~Example \ref{Ex:GPWav} in the Appendix) and the Besov-Laplace priors (see Section \ref{Subsec:LaplRates}) considered therein, with a standard exponential hyper-prior $\alpha\sim\text{Exp(1)}$. We note that our theory provides no theoretical guarantees for the resulting posterior distributions, whose asymptotic behaviours in the present setting with covariate-driven point processes observed over increasing domains present interesting open questions for future research; see Section \ref{Sec:Discussion} for further discussion. The purpose of this section is then to provide preliminary numerical evidence of the feasibility and reliability of such data-driven tuning approach for the problem at hand.

For the ground truths \eqref{Eq:1DTruth} and \eqref{Eq:1D_block} (and \eqref{Eq:2D_skn} and \eqref{Eq:2D_bs} from Appendix \ref{Suppl:AddSimul}), Table \ref{Tab:adapt} below displays the obtained relative $L^1$-estimation errors of the posterior means resulting from the hierarchical Gaussian and Besov-Laplace priors, for $n = 256$. The posterior mean of the regularity hyper-parameter $\alpha$, and 95\% credible intervals for it, are also included. The results are averaged over $50$ repetitions of each experiment, and standard deviations for the estimation errors are reported. Across the experiments, the hierarchical procedures obtained extremely close estimation errors to the ones scored by the fixed-regularity methods with oracle tuning of Sections \ref{Subsec:1DHom}, \ref{Subsec:1DInhom} and Appendix \ref{Suppl:AddSimul} (cf.~Tables \ref{Tab:1DEstim}, \ref{Tab:1D_block}, \ref{Tab:2D_skn} and \ref{Tab:2D_bs}, last rows). This hints at the capability of the hierarchical Gaussian and Besov-Laplace priors to flexibly adapt to the unknown characteristics of the true covariate-based intensity function, both under spatial homogeneity and inhomogeneity. The hierarchical procedure thus represents a natural and attractive choice for real-world scenarios where no information about the regularity of the inferential target is available.

In Table \ref{Tab:adapt}, we again note similar performances of both procedures for the spatially homogeneous ground truths \eqref{Eq:1DTruth} and \eqref{Eq:2D_skn}, while the hierarchical Besov-Laplace prior achieved lower estimation errors under the spatially inhomogeneous models \eqref{Eq:1D_block} and \eqref{Eq:2D_bs}. The obtained $95\%$ credible intervals for the regularity hyper-parameter $\alpha$ are compatible with the values from the oracle tuning via grid-search employed in the non-hierarchical case. The recorded slight discrepancies between the posterior means $\hat \alpha$ and the oracle hyper-parameters (e.g., ~for the Besov-Laplace prior and the spatially inhomogeneous ground truth \eqref{Eq:1D_block}) appear to be ultimately uninfluential in terms of the overall estimation error of the covariate-based intensity function. 

%\begin{table}[H]%%%%%%%%%%%%%%%%%%%%%%%%%%%%%%%%%%%%%%%%%%%%%%%%%%%%%%%%%%%
%\centering
%\begin{tabular}{c|c|c|c|c|}
%& \multicolumn{2}{c|}{Hierarchical Gaussian} & \multicolumn{2}{c|}{Hierarchical Laplace} \\
%$\rho_0$ & $\frac{\|\hat\rho_\Pi^{(n)} - \rho_0\|_{L^1}}{\|\rho_0\|_{L^1}}$ & $\hat\alpha$ & $\frac{\|\hat\rho_\Pi^{(n)} - \rho_0\|_{L^1}}{\|\rho_0\|_{L^1}}$ & $\hat\alpha $ \\
%\hline
%\eqref{Eq:1DTruth}  & 0.03 (0.007) & 1.43 (0.39)   & 0.02 (0.003) & 1.04 (0.21) \\
%\eqref{Eq:1D_block}   & 0.12 (0.01) &  0.76 ( 0.08) & 0.10 (0.02) & 1.08 (0.14) \\
%\eqref{Eq:2D_skn}  & 0.11 (0.009) & 2.02 (0.10) & 0.098 (0.01) & 2.06 (0.11) \\
%\eqref{Eq:2D_bs}  & 0.47 (0.05) &  0.96 (0.05) & 0.39 (0.01) &  1.44 (0.03) \\
%\hline
%\end{tabular}
%\caption{Averaged relative $L^1$-estimation errors (and their standard deviation) over 50 repeated experiments for the ground truths \eqref{Eq:1DTruth} and \eqref{Eq:1D_block}, and averaged posterior mean and $95\%$-credible intervals of the smoothness parameter. }
%    \label{Tab:adapt}
%\end{table}%%%%%%%%%%%%%%%%%%%%%%%%%%%%%%%%%%%%%%%%%%%%%%%%%%%%%%%%%%%%%%%%

\begin{table}[H]%%%%%%%%%%%%%%%%%%%%%%%%%%%%%%%%%%%%%%%%%%%%%%%%%%%%%%%%%%%
\caption{Averaged relative $L^1$-estimation errors 
	$\|\hat\rho-\rho_0\|_{L^1}/\|\rho_0\|_{L^1}$ (with standard deviations in parentheses) over $50$ repeated experiments for the ground truths \eqref{Eq:1DTruth}, \eqref{Eq:1D_block}, \eqref{Eq:2D_skn}, and \eqref{Eq:2D_bs}. The table reports results for hierarchical Gaussian and hierarchical Besov--Laplace priors, where the regularity parameter $\alpha$ is assigned a hyperprior and inferred jointly with the intensity function. For each setting, we also report the posterior mean of $\alpha$ together with pointwise $95\%$ credible intervals. The results illustrate the adaptive behaviour of the hierarchical approach, which automatically selects different levels of smoothness depending on the underlying regularity of the true intensity.}
	\hrule
	\centering
	\begin{tabular}{ccccc}
		& \multicolumn{2}{c}{Hierarchical Gaussian} & \multicolumn{2}{c}{Hierarchical Laplace} \\
		\hline
		$\rho_0$ & $\frac{\|\hat\rho_\Pi^{(n)} - \rho_0\|_{L^1}}{\|\rho_0\|_{L^1}}$ & $\hat\alpha \ [95\%\text{-CI}] $ & $\frac{\|\hat\rho_\Pi^{(n)} - \rho_0\|_{L^1}}{\|\rho_0\|_{L^1}}$ & $\hat\alpha \ [95\%\text{-CI}] $ \\

		\eqref{Eq:1DTruth}  & 0.03 (0.007) & 1.43 [0.95, 2.33]   & 0.02 (0.003) & 1.04 [0.62, 1.48] \\
		\eqref{Eq:1D_block}   & 0.12 (0.01) &  0.76 [0.21,1.31] & 0.10 (0.02) & 1.08 [0.50,1.58] \\
		\eqref{Eq:2D_skn}  & 0.11 (0.009) & 2.02 [1.42,2.60] & 0.098 (0.01) & 2.25 [1.54,2.86] \\
		\eqref{Eq:2D_bs}  & 0.47 (0.05) &  0.96 [0.46,1.49] & 0.39 (0.01) &  1.44 [0.89,1.98] \\
		\hline
	\end{tabular}
	\hrule
	\label{Tab:adapt}
\end{table}%%%%%%%%%%%%%%%%%%%%%%%%%%%%%%%%%%%%%%%%%%%%%%%%%%%%%%%%%%%%%%%%

%\begin{table}[H]
%\centering
%\begin{tabular}{c|c|c|}
%Model & \textbf{Gaussian} & \textbf{Laplace} \\
%\hline
%\eqref{Eq:1DTruth}  & 1.43 (1.25, 2.03) & 1.04 (0.92, 1.18) \\
%\eqref{Eq:1D_block}   & 0.76 (0.71, 0.81) &  1.04 (1.00, 1.08) \\
%\eqref{Eq:2D_skn}  & 2.02 (1.92, 2.10) &  2.25 (2.14, 2.36) \\
%\eqref{Eq:2D_bs}  &  0.96 (0.91, 0.99) &  1.44 (1.39, 1.48) \\
%\hline
%\end{tabular}
%\caption{Average posterior mean and $95\%$ credible intervals of the smoothness parameter $\alpha$ over 50 repeated experiments. }
%    \label{Tab:adapt}
%\end{table}

Computationally, the above hierarchical procedures require more sophisticated MCMC algorithms for posterior sampling, where the base routines to implement the fixed-regularity methods of Sections \ref{Subsec:1DHom} and \ref{Subsec:1DInhom} are embedded in a Metropolis-within-Gibbs structure that alternates approximate draws from the conditional posterior distributions of the covariate-based intensity and the regularity hyper-parameters. We provide details in Appendix \ref{App:AdaptiveMCMC}. Per-experiment running times ranged between 45 and 90 minutes.

%
%
%
%

%%%%%%%%%%%%%%%%%%%%%%%%%%%%%%%%%%%%%%%%%%%%%%%%%%%%%%%%%%%%%%%%%%%%%%%%%%
\section{Applications to a tropical rainforest dataset}
\label{Sec:RealData}

Covariate-driven point processes naturally arise in the analysis of forestry data, where studying the influence of location-dependent environmental variables on the spatial distribution of trees is often of primary interest. In this section, we present an application to a tropical rainforest dataset, containing the position of 3,605 \textit{Beilschmiedia pendula Lauraceae} (BEI) trees in a rectangular area of $1000 \times 500$ metres  in Barro Colorado Island \cite{BRT16}, with detailed information on the terrain elevation and steepness. See Figure \ref{Fig:BEI}. Similar data were investigated, among others, by \cite{G08}, using nonparametric kernel methods, and by \cite{MOLL07}, within a parametric Bayesian model based on the log-Gaussian Cox process. These studies broadly highlight the preference of this species of trees for medium-to-high terrain and steep slopes.

\begin{figure}[H]%%%%%%%%%%%%%%%%%%%%%%%%%%%%%%%%%%%%%%%%%%%%%%%%%%%%%%%%%
	\centering
	\includegraphics[width=\linewidth]{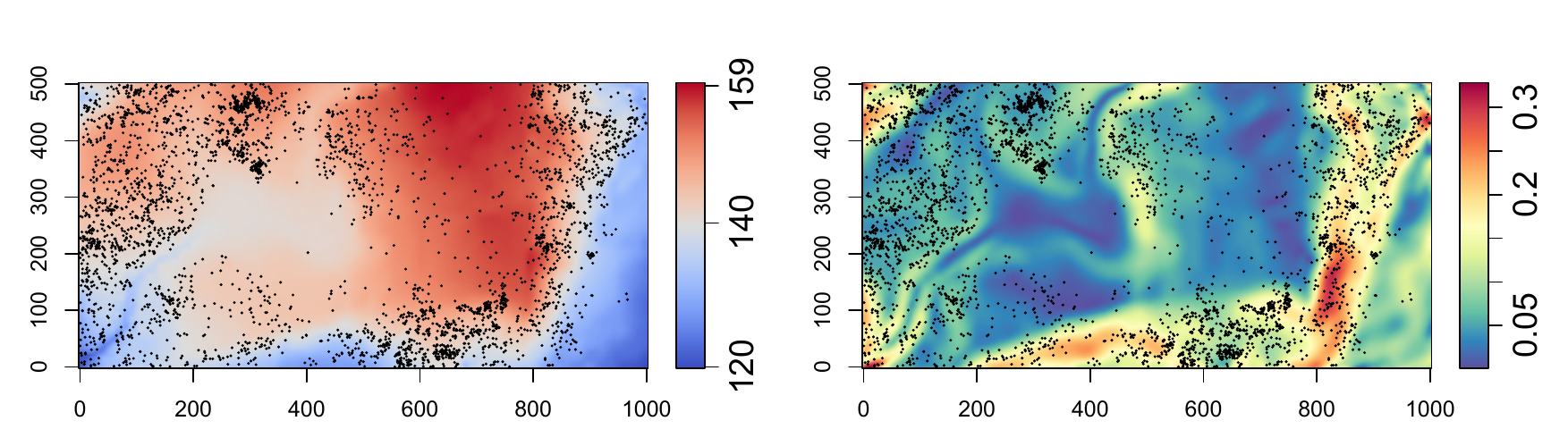}
	\caption{Terrain elevation (left, in metres) and terrain steepness (right, measured as the norm of the altitude gradient) for the BEI rainforest dataset from Barro Colorado Island. The black points represent the observed locations of the $3605$ \textit{Beilschmiedia pendula Lauraceae} trees within the $1000\times500$ metre observation window.}
	\label{Fig:BEI}
\end{figure}%%%%%%%%%%%%%%%%%%%%%%%%%%%%%%%%%%%%%%%%%%%%%%%%%%%%%%%%%%%%%%

A distinctive feature of the BEI dataset is that the trees appear to be mostly distributed along specific contour lines, with sharp variations in their density corresponding to changes in elevation and steepness. Motivated by this, we base our analysis on the Besov-Laplace priors from Section \ref{Subsec:LaplRates}, with the goal of achieving a clear separation between high- and low-intensity zones.

We start with an exploratory step in which we model the intensity as a function of each covariate individually. Figure \ref{Fig:BEI1DEstim} shows the obtained posterior means and point-wise credible sets, computed via the MCMC algorithm described in Appendix \ref{App:Algorithms}. As benchmark, the plot also includes the kernel estimates from \cite{BRT16}, alongside the associated pointwise 95\%-confidence intervals. The results are in mutual agreement and fall in line with the previous findings from the literature. The estimated elevation-based intensities steadily increase up to global maxima located near 150 metres, followed by sharp drops. For the slope, sharp raises are detected at gradient norms around 0.04, after which the steepness-based intensities stabilise. The sharp increase is particularly evident in the results for the Besov-Laplace prior, stemming from its edge-preserving properties. Within the identified trends, the estimates exhibit internal variability and detect local modes. As observed in \cite{MOLL07}, these fluctuations are likely spurious and may be caused by small scale clustering, between-species competition and/or unobserved covariates.

\begin{figure}[H]%%%%%%%%%%%%%%%%%%%%%%%%%%%%%%%%%%%%%%%%%%%%%%%%%%%%%%%%%
	\centering
	\includegraphics[width=\linewidth]{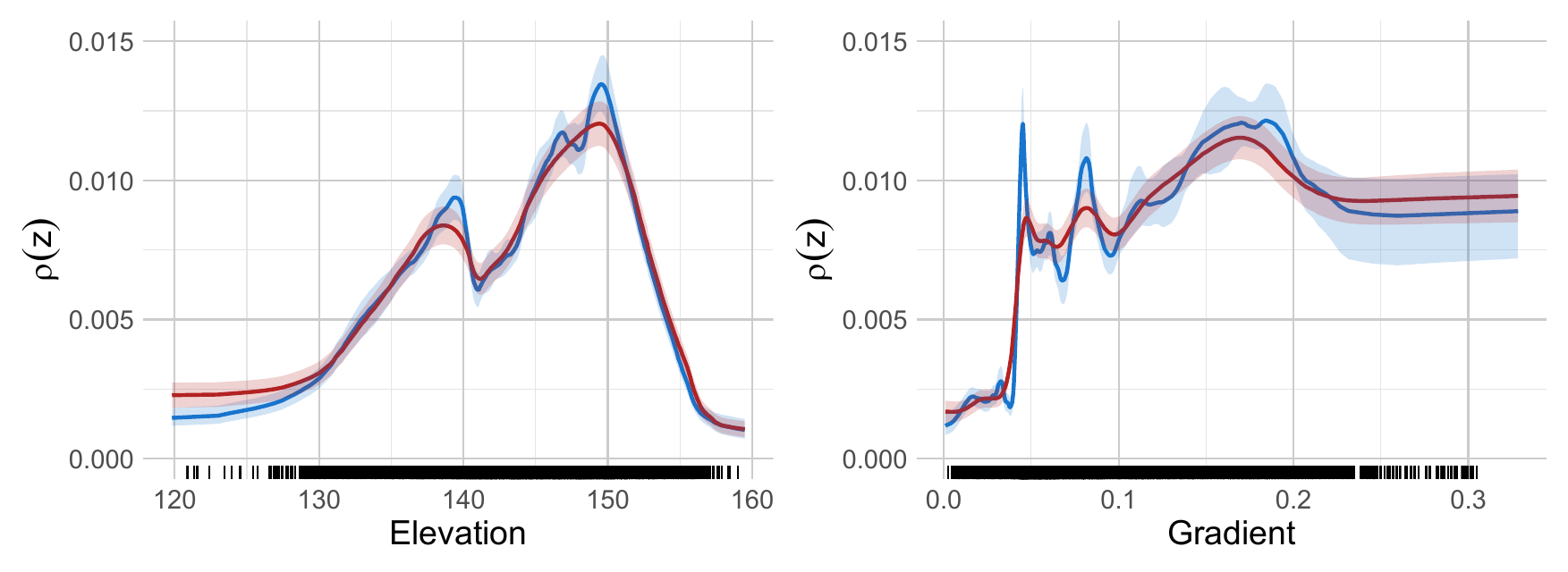}
	\caption{Posterior estimates of the covariate-dependent intensity function based on terrain elevation (left) and terrain steepness (right) in the BEI dataset. The solid blue curves represent the posterior means under the Besov--Laplace prior with regularity parameter $\alpha=1$, while the shaded blue regions denote the associated pointwise $95\%$-credible intervals. The red curves correspond to the kernel-based estimators from \cite{BRT16}, together with their pointwise $95\%$-confidence intervals. Both methods identify preferred elevation ranges around $150$ metres and sharp increases in intensity for slope values near $0.04$, with the Besov--Laplace prior producing sharper transitions due to its edge-preserving properties.}
	\label{Fig:BEI1DEstim}
\end{figure}%%%%%%%%%%%%%%%%%%%%%%%%%%%%%%%%%%%%%%%%%%%%%%%%%%%%%%%%%%%%%%

Turning to the full analysis, Figure \ref{Fig:BEI2DEstim} shows the posterior mean and the kernel estimate jointly based on elevation and slope. These broadly reinforce the observations from the preceding univariate study, identifying a particularly favourable range around 150 meters of altitude, as well as a sharp increase in the intensity for gradient norms around 0.04, across almost all elevations. Compared to the kernel method, the Besov-Laplace posterior mean more clearly separates the covariate values with high and low intensity. For example, a steep drop is detected for altitudes exceeding 155 metres, similar to the one displayed in the left panel of Figure \ref{Fig:BEI1DEstim}. Lastly, in Figure \ref{Fig:BEI2DSpatial}, we plot the plug-in posterior mean $\hat\rho^{(n)}_\Pi\circ Z^{(n)}$ and plug-in kernel estimate $\hat\rho^{(n)}_\kappa\circ Z^{(n)}$ of the spatial intensity function $\lambda^{(n)}_\rho$ from \eqref{Eq:PointProc}. Both closely match the terrain contour lines from Figure \ref{Fig:BEI} (right), providing convincing reconstructions of the point pattern and corroborating the notion that the density of trees be mainly influenced by the steepness, e.g.,~\cite[Example 10]{MOLL07} and \cite[Section 5]{G08}. Again, the results for the Besov-Laplace prior provide a sharper separation between the zones in the observation window with high and low spatial intensity, as can be seen for example from a comparison of the regions in the bottom-right corners of the two panels of Figure \ref{Fig:BEI2DSpatial}.

\begin{figure}[H]%%%%%%%%%%%%%%%%%%%%%%%%%%%%%%%%%%%%%%%%%%%%%%%%%%%%%%%%%
	\centering
	\includegraphics[width=\linewidth]{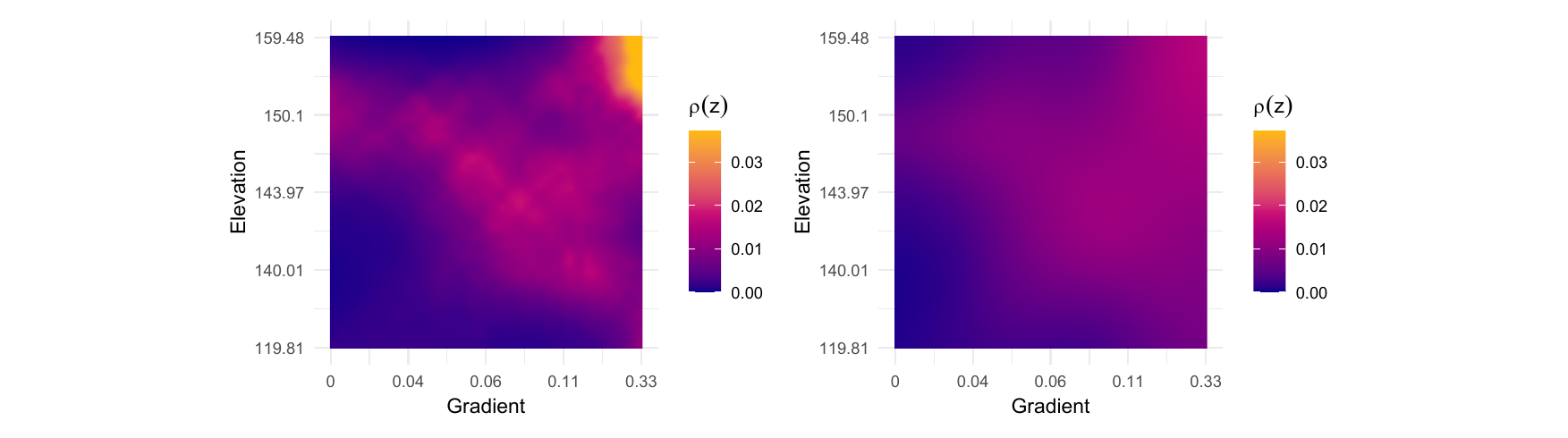}
	\caption{Posterior mean estimate of the intensity function under the Besov--Laplace prior with regularity parameter $\alpha=1$ (left), and corresponding kernel estimate (right), jointly modelled as a function of terrain elevation and terrain steepness in the BEI dataset. Both approaches identify a favourable elevation range around $150$ metres and a marked increase in intensity for gradient norms near $0.04$. Compared to the kernel estimator, the Besov--Laplace posterior mean yields a sharper separation between regions of high and low intensity, particularly for elevations exceeding approximately $155$ metres.}
	\label{Fig:BEI2DEstim}
\end{figure}%%%%%%%%%%%%%%%%%%%%%%%%%%%%%%%%%%%%%%%%%%%%%%%%%%%%%%%%%%%%%%

\begin{figure}[H]%%%%%%%%%%%%%%%%%%%%%%%%%%%%%%%%%%%%%%%%%%%%%%%%%%%%%%%%%
	\centering
	\includegraphics[width=\linewidth]{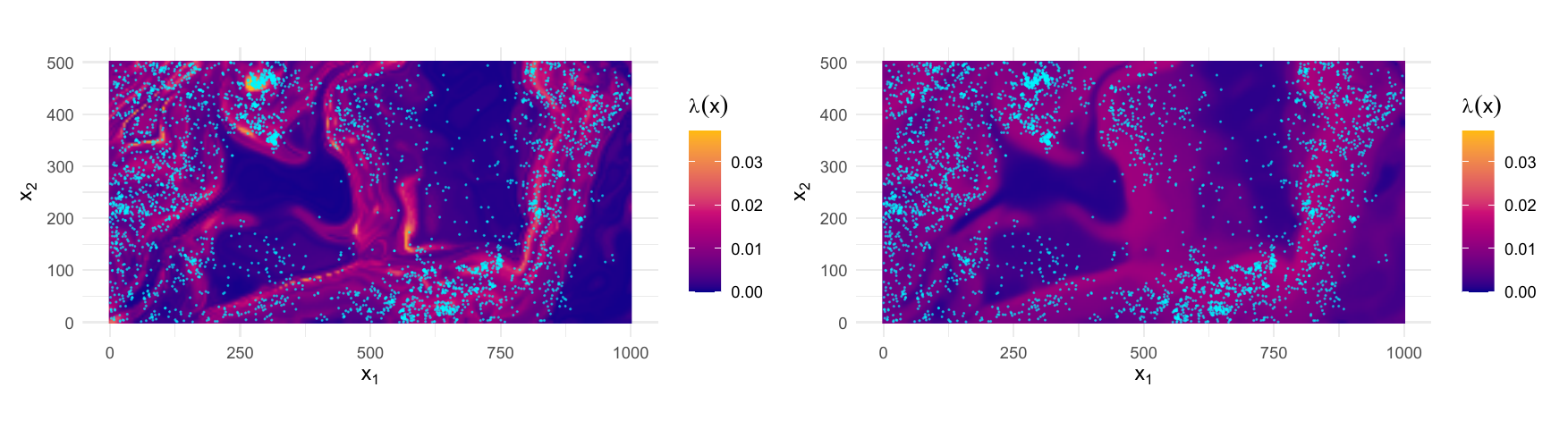}
	\caption{Plug-in estimates of the spatial intensity function $\lambda_\rho^{(n)}$ over the BEI observation window obtained from the posterior mean under the Besov--Laplace prior with regularity parameter $\alpha=1$ (left) and from the kernel estimator (right). The estimated spatial intensities closely follow the terrain contour structure, supporting the interpretation that tree density is strongly influenced by terrain steepness. Relative to the kernel estimator, the Besov--Laplace approach provides a sharper delineation between regions of high and low spatial intensity, particularly near the boundaries of steep terrain areas.}
	\label{Fig:BEI2DSpatial}
\end{figure}%%%%%%%%%%%%%%%%%%%%%%%%%%%%%%%%%%%%%%%%%%%%%%%%%%%%%%%%%%%%%%

%

%In \ref{Suppl:RealData} we present another application, analysing a Canadian wildfire dataset with meteorological covariates.

%
%
%
%
%

%%%%%%%%%%%%%%%%%%%%%%%%%%%%%%%%%%%%%%%%%%%%%%%%%%%%%%%%%%%%%%%%%%%%%%%%%%
\section{Discussion}
\label{Sec:Discussion}

%We have considered nonparametric Bayesian intensity estimation based on the observation of a covariate-driven Poisson process over a large domain. We have provided optimal theoretical guarantees for a wide class of Gaussian priors and for Besov-Laplace priors, demonstrating their feasibility and usefulness in practice via simulations and real data analyses. 
The increasing domain asymptotics considered throughout is prototypical in spatial statistics, e.g., ~\cite{J81,J93,Z05}, since in many applications points and covariates have already been ‘sampled’ at the time of data collection, and more information can be gathered only by enlarging the observation window. Other common asymptotics are of ‘in-fill’ type, with an increasing number of points within a fixed domain. The study of these require different assumptions and techniques, and we refer to the preprint \cite{DG25arXiv} for a nonparametric Bayesian analysis of i.i.d.~observations of covariate-driven Poisson processes.

A limitation of our results is that they are non-adaptive, since both Theorems \ref{Theo:GPRates} and \ref{Theo:LaplRates} require to correctly tune the prior regularity to the smoothness of the ground truth to achieve optimal rates. 
While hierarchical procedures based on Gaussian and Besov priors (similar to the ones explored empirically in Section \ref{Subsec:Adaptive}) have been shown to be able to achieve automatic adaptation in simpler statistical models, cf.~\cite{vdVvZ09,AS24}, pursuing this extension in the present setting is technically challenging, primarily because of the necessity, within our proof strategy, to employ concentration inequalities for spatial averages like \eqref{Eq:ConcIneq} uniformly over the support of the prior.

Another important open question concerns the frequentist validity of the associated uncertainty quantification. Indeed, in infinite-dimensional statistical models, it is generally known that, even for consistent posteriors, the associated credible sets may have asymptotically vanishing coverage, e.g., ~\cite{DF86}. This issue could be approached via semiparametric and nonparametric Bernstein-von Mises-type theorems, which have been established, in different statistical models, for both Gaussian, e.g., ~\cite{CN13}, and Besov-Laplace priors \cite{GR25}. %While some positive numerical results are provided in \ref{App:Coverage}, 
We leave these investigations for future work.

\section*{Data availability statement}
The \texttt{R} code to replicate the simulation study and real data analyses is available at:\\ \href{https://github.com/PatricDolmeta/covariate-based-nonparametric-Bayesian-intensity-estimation-with-Gaussian-and-Besov-Laplace-priors}{\texttt{https://github.com/PatricDolmeta}}.

\section*{Acknowledgments}

The authors are grateful to the Associate Editor and two anonymous Referees for helpful comments that lead to an improvement of the manuscript.
The authors thank the ``de Castro'' Statistics Initiative and Collegio Carlo Alberto for supporting this research. M.G.~also acknowledges the partial financial support by MUR, PRIN project 2022CLTYP4. 

\bibliography{Bibliography.bib}

@article{rousseau2017asymptotic,
  title={Asymptotic behaviour of the empirical bayes posteriors associated to maximum marginal likelihood estimator},
  author={Rousseau, Judith and Szabo, Botond and others},
  journal={ANNALS OF STATISTICS},
  volume={45},
  number={2},
  pages={833--865},
  year={2017}
}

@inproceedings{AMM09,
	author = {Adams, Ryan Prescott and Murray, Iain and MacKay, David J. C.},
	booktitle = {Proceedings of the 26th Annual International Conference on Machine Learning},
	numpages = {8},
	pages = {9--16},
	series = {ICML '09},
	title = {Tractable nonparametric {B}ayesian inference in {P}oisson processes with {G}aussian process intensities},
	year = {2009},
}

@article{ADH21,
  title={Rates of contraction of posterior distributions based on $p$-exponential priors},
  author={Agapiou, Sergios and Dashti, Masoumeh and Helin, Tapio},
  journal={{B}ernoulli},
  volume={27},
  number={3},
  pages={1616--1642},
  year={2021}
}

@article{AS24,
  title={Adaptive inference over {B}esov spaces in the white noise model using $p$-exponential priors},
  author={Agapiou, Sergios and Savva, Aimilia},
  journal={{B}ernoulli},
  volume={30},
  number={3},
  pages={2275--2300},
  year={2024}
}

@article{AW24,
  title={{L}aplace priors and spatial inhomogeneity in {B}ayesian Inverse Probl.},
  author={Agapiou, Sergios and Wang, Sven},
  journal={{B}ernoulli},
  volume={30},
  number={2},
  pages={878--910},
  year={2024}
}

@article{BCST12,
author = {Baddeley, Adrian and Chang, Ya-Mei and Song, Yong and Turner, Rolf},
year = {2012},
month = {01},
pages = {221-236},
title = {Nonparametric estimation of the dependence of a spatial point process on spatial covariates},
volume = {5},
journal = {Stat. Interface}
}

@book{BRT16,
  title={Spatial point patterns: methodology and applications with R},
  author={Baddeley, Adrian and Rubak, Ege and Turner, Rolf},
  volume={1},
  year={2016},
  publisher={CRC press Boca Raton}
}

@article {BS17,
    AUTHOR = {Bandyopadhyay, Soutir and Subba Rao, Suhasini},
    TITLE = {A test for stationarity for irregularly spaced spatial data},
   	JOURNAL = {J. Roy. Statist. Soc. Ser. B},
  	FJOURNAL = {J. Roy. Statist. Soc. Ser. B},
    VOLUME = {79},
    YEAR = {2017},
    NUMBER = {1},
    PAGES = {95--123},
    ISSN = {1369-7412,1467-9868},
   	MRCLASS = {62M30 (60G10 62E20 62M07 62M15)},
  	MRNUMBER = {3597966}
}

@article {BSvZ15,
    AUTHOR = {Belitser, Eduard and Serra, Paulo and van Zanten, Harry},
     TITLE = {Rate-optimal {B}ayesian intensity smoothing for inhomogeneous {P}oisson processes},
   	JOURNAL = {J. Statist. Plann. Inference},
  	FJOURNAL = {Journal of Statistical Planning and Inference},
    VOLUME = {166},
    YEAR = {2015},
    PAGES = {24--35},
    ISSN = {0378-3758,1873-1171},
   	MRCLASS = {62F15 (60G55 62G05)},
  	MRNUMBER = {3390131},
    DOI = {10.1016/j.jspi.2014.03.009},
}

@article{BGMMM20,
	author = {Borrajo, M. I. and Gonz\'{a}lez-Manteiga, W. and Mart\'{\i}nez-Miranda, M. D.},
	fjournal = {Computational Statistics \& Data Analysis},
	issn = {0167-9473},
	journal = {Comput. Statist. Data Anal.},
	pages = {106875, 21},
	title = {Bootstrapping kernel intensity estimation for inhomogeneous point processes with spatial covariates},
	volume = {144},
	year = {2020}}

@incollection{B78,
	author = {Brillinger, David R.},
	booktitle = {Developments in statistics, {V}ol. 1},
	isbn = {0-12-426601-0},
	mrclass = {62M10 (60G55)},
	mrnumber = {501668},
	mrreviewer = {Alan\ G.\ Hawkes},
	pages = {33--133},
	publisher = {Academic Press, New York-London},
	title = {Comparative aspects of the study of ordinary time series and of point processes},
	year = {1978}}

@article{CN13,
    author = {Isma{\"e}l Castillo and Richard Nickl},
    title = {{Nonparametric {B}ernstein–von {M}ises theorems in {G}aussian white noise}},
    volume = {41},
    journal = {Ann. Statist.},
    number = {4},
    publisher = {Institute of Mathematical Statistics},
    pages = {1999--2028},
    year = {2013}}

@article{CSRW13,
   title={{MCMC} Methods for Functions: Modifying Old Algorithms to Make Them Faster},
   volume={28},
   number={3},
   journal={Statistical Science},
   author={Cotter, S. L. and Roberts, G. O. and Stuart, A. M. and White, D.},
   year={2013}
}

@article {C55,
    AUTHOR = {Cox, D. R.},
    TITLE = {Some statistical methods connected with series of events},
   	JOURNAL = {J. Roy. Statist. Soc. Ser. B},
  	FJOURNAL = {J. Roy. Statist. Soc. Ser. B},
    VOLUME = {17},
    YEAR = {1955},
    PAGES = {129--157; discussion, 157--164},
    ISSN = {0035-9246},
   	MRCLASS = {62.0X},
  	MRNUMBER = {92301}
}

@book {C15,
    AUTHOR = {Cressie, Noel A. C.},
    TITLE = {Statistics for Spatial Data},
    SERIES = {Wiley Classics Library},
   	EDITION = {Revised},
 	PUBLISHER = {John Wiley \& Sons, Inc., New York},
    YEAR = {2015},
    PAGES = {xx+900},
    ISBN = {978-1-119-11461-1},
   	MRCLASS = {62M30 (01A75 60D05 60G55 62H11 62M40)},
  	MRNUMBER = {3559472},
}

@book{DVJ03,
  title={An introduction to the theory of point processes: volume I: elementary theory and methods},
  author={Daley, Daryl J and Vere-Jones, David},
  year={2003},
  publisher={Springer}
}

@article{DHS12,
  title={{B}esov priors for {B}ayesian Inverse Probl.},
  author={Dashti, Masoumeh and Harris, Stephen and Stuart, Andrew},
  journal={Inverse Probl. Imaging},
  volume={6},
  number={2},
  pages={183--200},
  year={2012}
}

@article{DF86,
  title={On the Consistency of {B}ayes Estimates},
  author={Diaconis, Persi and Freedman, David},
  journal={Ann. Statist.},
  volume={14},
  number={1},
  pages={1--26},
  year={1986},
  publisher={Institute of Mathematical Statistics}
}

@article{D90,
	author = {Peter J. Diggle},
	issn = {09641998, 1467985X},
	journal = {J. Roy. Statist. Soc. Ser. A},
	number = {3},
	pages = {349--362},
	publisher = {[Wiley, Royal Statistical Society]},
	title = {A point process modelling approach to raised incidence of a rare phenomenon in the vicinity of a prespecified point},
	urldate = {2023-10-10},
	volume = {153},
	year = {1990}}

@article{DG25arXiv,
  title={A nonparametric {B}ayesian analysis of independent and identically distributed observations of covariate-driven {P}oisson processes},
  author={Dolmeta, Patric and Giordano, Matteo},
  journal={arXiv preprint arXiv:2509.02299},
  year={2025}
}

@article{DG25Suppl,
  title={Supplement to ``Increasing domain asymptotics for covariate-based nonparametric {B}ayesian intensity estimation''},
  author={Dolmeta, Patric and Giordano, Matteo},
  journal={},
  year={2025}
}

@article{DJ94,
  title={Ideal spatial adaptation by wavelet shrinkage},
  author={Donoho, David L and Johnstone, Iain M},
  journal={{B}iometrika},
  volume={81},
  number={3},
  pages={425--455},
  year={1994},
  publisher={Oxford University Press}
}

@article{DJ98,
  title={Minimax estimation via wavelet shrinkage},
  author={Donoho, David L and Johnstone, Iain M},
  journal={{A}nn. {S}tatist.},
  volume={26},
  number={3},
  pages={879--921},
  year={1998},
  publisher={Institute of Mathematical Statistics}
}

@article{DRRS17,
	author = {Donnet, Sophie and Rivoirard, Vincent and Rousseau, Judith and Scricciolo, Catia},
	doi = {10.1214/15-BA986},
	fjournal = {{B}ayesian Analysis},
	issn = {1936-0975},
	journal = {{B}ayesian Anal.},
	mrclass = {62N02 (60G55 62F15 62G05)},
	mrnumber = {3597567},
	mrreviewer = {Alicja Jokiel-Rokita},
	number = {1},
	pages = {53--87},
	title = {Posterior concentration rates for counting processes with {A}alen multiplicative intensities},
	volume = {12},
	year = {2017}
}

@article{DG20,
  title={Multiscale functional inequalities in probability: Constructive approach},
  author={Duerinckx, Mitia and Gloria, Antoine},
  journal={Annales Henri Lebesgue},
  volume={3},
  pages={825--872},
  year={2020}
}

@book{E10,
  title={Partial Differential Equations},
  author={Evans, Lawrence C},
  volume={19},
  year={2010},
  publisher={American Mathematical Soc.}
}

@book{GvdV17,
  title={Fundamentals of nonparametric {B}ayesian inference},
  author={Ghosal, Subhashis and Van der Vaart, Aad W},
  year={2017},
  publisher={Cambridge University Press}
}

@article{GR25,
  title={Semiparametric {B}ernstein-von {M}ises theorems for reversible diffusions},
  author={Giordano, Matteo and Ray, Kolyan},
  journal={arXiv preprint arXiv:2505.16275},
  year={2025}
}

@book{GN16,
	author = {Gin\'e, Evarist and Nickl, Richard},
	isbn = {978-1-107-04316-9},
	mrclass = {62-02 (60B11 60F05 60F17 60G15 62E20 62Gxx)},
	mrnumber = {3588285},
	pages = {xiv+690},
	publisher = {Cambridge University Press, New York},
	title = {Mathematical foundations of infinite-dimensional statistical models},
	year = {2016}
}

@article{G23,
  title={{B}esov-{L}aplace priors in density estimation: optimal posterior contraction rates and adaptation},
  author={Giordano, Matteo},
  journal={Electron. J. Stat},
  volume={17},
  number={2},
  pages={2210--2249},
  year={2023},
  publisher={The Institute of Mathematical Statistics and the {B}ernoulli Society}
}

@article{GKR25,
  title={Nonparametric Bayesian intensity estimation for covariate-driven inhomogeneous point processes},
  author={Giordano, Matteo and Kirichenko, Alisa and Rousseau, Judith},
  journal={Bernoulli},
  volume={32},
  number={2},
  pages={1020--1044},
  year={2026},
  publisher={Bernoulli Society for Mathematical Statistics and Probability}
}

@article{GN20,
  title={Consistency of {B}ayesian inference with {G}aussian process priors in an elliptic inverse problem},
  author={Giordano, Matteo and Nickl, Richard},
  journal={Inverse Probl.},
  volume={36},
  number={8},
  pages={085001},
  year={2020},
  publisher={IOP Publishing}
}

@article{GR22,
  title={Nonparametric {B}ayesian inference for reversible multidimensional diffusions},
  author={Giordano, Matteo and Ray, Kolyan},
  journal={{A}nn. {S}tatist.},
  volume={50},
  number={5},
  pages={2872--2898},
  year={2022},
  publisher={Institute of Mathematical Statistics}
}

@article{G08,
	author = {Guan, Yongtao},
	fjournal = {Journal of the American Statistical Association},
	issn = {0162-1459},
	journal = {J. Amer. Statist. Assoc.},
	mrclass = {Expansion},
	mrnumber = {2528839},
	number = {483},
	pages = {1238--1247},
	title = {On consistent nonparametric intensity estimation for inhomogeneous spatial point processes},
	volume = {103},
	year = {2008}}

@article{HB15,
  title={Maximum a posteriori probability estimates in infinite-dimensional {B}ayesian Inverse Probl.},
  author={Helin, Tapio and Burger, Martin},
  journal={Inverse Probl.},
  volume={31},
  number={8},
  pages={085009},
  year={2015},
  publisher={IOP Publishing}
}

@book{HKPT12,
  title={Wavelets, approximation, and statistical applications},
  author={H{\"a}rdle, Wolfgang and Kerkyacharian, Gerard and Picard, Dominique and Tsybakov, Alexander},
  volume={129},
  year={2012},
  publisher={Springer Science \& Business Media}
}

@article{J93,
  title={Asymptotic normality of estimates in spatial point processes},
  author={Jensen, Jens Ledet},
  journal={Scand. J. Statist.},
  pages={97--109},
  year={1993},
  volume={20},
  number = {2},
  publisher={JSTOR}
}

@article{J81,
  title={Central limit theorem and convergence of empirical processes for stationary point processes},
  author={Jolivet, E},
  journal={Point processes and queuing problems (Colloqium, Keszthely, 1978)},
  volume={24},
  pages={117--161},
  year={1981},
  publisher={North-Holland Amsterdam-New York}
}

@article{JMS12,
  title={Pinpointing spatio-temporal interactions in wildfire patterns},
  author={Juan, Pablo and Mateu, Jorge and Saez, M},
  journal={Stochastic Environmental Research and Risk Assessment},
  volume={26},
  pages={1131--1150},
  year={2012}
}

@article{KvZ15,
	author = {Kirichenko, Alisa and van Zanten, Harry},
	fjournal = {Journal of Machine Learning Research (JMLR)},
	issn = {1532-4435},
	journal = {J. Mach. Learn. Res.},
	mrclass = {62F15 (60G15 60G55 68T05)},
	mrnumber = {3450529},
	pages = {2909--2919},
	title = {Optimality of {P}oisson processes intensity learning with {G}aussian processes},
	volume = {16},
	year = {2015}
}

@article{KLNS12,
  title={Sparsity-promoting {B}ayesian inversion},
  author={Kolehmainen, Ville and Lassas, Matti and Niinim{\"a}ki, Kati and Siltanen, Samuli},
  journal={Inverse Probl.},
  volume={28},
  number={2},
  pages={025005},
  year={2012},
  publisher={IOP Publishing}
}

@article{KPDO23,
    author = {Jonathan Koh and Fran{\c{c}}ois Pimont and Jean-Luc Dupuy and Thomas Opitz},
    title = {{Spatiotemporal wildfire modeling through point processes with moderate and extreme marks}},
    volume = {17},
    journal = {The Annals of Applied Statistics},
    number = {1},
    pages = {560 -- 582},
    year = {2023}}

@article {KS07,
    AUTHOR = {Kottas, Athanasios and Sans\'{o}, Bruno},
    TITLE = {{B}ayesian mixture modeling for spatial {P}oisson process intensities, with applications to extreme value analysis},
   	JOURNAL = {J. Statist. Plann. Inference},
  	FJOURNAL = {Journal of Statistical Planning and Inference},
    VOLUME = {137},
    YEAR = {2007},
    NUMBER = {10},
    PAGES = {3151--3163},
    ISSN = {0378-3758,1873-1171},
   	MRCLASS = {62M30 (62G07 62G32)},
  	MRNUMBER = {2365118},
    DOI = {10.1016/j.jspi.2006.05.022},
}

@book{K98,
	author = {Kutoyants, Yu. A.},
	isbn = {0-387-98562-X},
	mrclass = {62M09 (62G05)},
	mrnumber = {1644620},
	mrreviewer = {Paul I. Nelson},
	pages = {viii+276},
	publisher = {Springer-Verlag, New York},
	series = {Lecture Notes in Statistics},
	title = {Statistical inference for spatial {P}oisson processes},
	volume = {134},
	year = {1998}}

@article{LSS09,
  title={Discretization invariant {B}ayesian inversion and {B}esov space priors},
  author={Lassas, M and Saksman, E and Siltanen, S},
  journal={Inverse Probl. Imaging},
  volume={3},
  number={1},
  pages={87--122},
  year={2009},
  publisher={American Institute of Mathematical Sciences}
}

@article{LL99,
  title={Approximation, metric entropy and small ball estimates for {G}aussian measures},
  author={Li, Wenbo V and Linde, Werner},
  journal={The Annals of Probability},
  volume={27},
  number={3},
  pages={1556--1578},
  year={1999},
  publisher={Institute of Mathematical Statistics}
}

@article{L82,
	author = {Lo, Albert Y.},
	doi = {10.1007/BF00575525},
	fjournal = {Zeitschrift f\"{u}r Wahrscheinlichkeitstheorie und Verwandte Gebiete},
	issn = {0044-3719},
	journal = {Z. Wahrsch. Verw. Gebiete},
	mrclass = {62M07 (60G55)},
	mrnumber = {643788},
	mrreviewer = {Albert\ M.\ Liebetrau},
	number = {1},
	pages = {55--66},
	title = {{B}ayesian nonparametric statistical inference for {P}oisson point processes},
	volume = {59},
	year = {1982}}

@article{MSW98,
	author = {M{\o}ller, Jesper and Syversveen, Anne Randi and Waagepetersen, Rasmus Plenge},
	fjournal = {Scand. J. Stat. Theory and Applications},
	issn = {0303-6898,1467-9469},
	journal = {Scand. J. Statist.},
	mrclass = {62M09 (60G55 62C12 62F10 62M30)},
	mrnumber = {1650019},
	number = {3},
	pages = {451--482},
	title = {Log {G}aussian {C}ox processes},
	volume = {25},
	year = {1998}}

@book{N23,
  title={{B}ayesian Non-linear Statistical Inverse Probl.},
  author={Nickl, R.},
  isbn={9783985470532},
  series={Zurich Lectures in Advanced Mathematics},
  year={2023},
  publisher={EMS Press}
}

@book{RW06,
    author = {Rasmussen, Carl Edward and Williams, Christopher K. I.},
    title = {{G}aussian Processes for Machine Learning},
    publisher = {The MIT Press},
    year = {2005}
}

@book{R00,
  title={{G}aussian and non-{G}aussian linear time series and random fields},
  author={Rosenblatt, Murray},
  year={2000},
  publisher={Springer Science \& Business Media}
}

@article{RMC09,
	author = {Rue, H{\aa}vard and Martino, Sara and Chopin, Nicolas},
	doi = {10.1111/j.1467-9868.2008.00700.x},
	fjournal = {J. Roy. Statist. Soc. Ser. B},
	issn = {1369-7412,1467-9868},
	journal = {J. Roy. Statist. Soc. Ser. B},
	mrclass = {99-01},
	mrnumber = {2649602},
	number = {2},
	pages = {319--392},
	title = {Approximate {B}ayesian inference for latent {G}aussian models by using integrated nested {L}aplace approximations},
	volume = {71},
	year = {2009}}

@book {S86,
    AUTHOR = {Silverman, B. W.},
     TITLE = {Density estimation for statistics and data analysis},
    SERIES = {Monographs on Statistics and Applied Probability},
 PUBLISHER = {Chapman \& Hall, London},
      YEAR = {1986},
     PAGES = {x+175},
      ISBN = {0-412-24620-1},
   MRCLASS = {62G05},
  MRNUMBER = {848134},
MRREVIEWER = {David\ W.\ Scott},
       DOI = {10.1007/978-1-4899-3324-9},
}

@book{T02,
  title={Random heterogeneous materials: microstructure and macroscopic properties},
  author={Torquato, Salvatore},
  volume={16},
  year={2002},
  publisher={Springer}
}

@article{vdVvZ09,
author = {A. W. van der Vaart and J. H. van Zanten},
title = {{Adaptive {B}ayesian estimation using a {G}aussian random field with inverse Gamma bandwidth}},
volume = {37},
journal = {Ann. Statist.},
number = {5B},
publisher = {Institute of Mathematical Statistics},
pages = {2655--2675},
year = {2009},
}

@article{W07,
	author = {Waagepetersen, Rasmus Plenge},
	doi = {10.1111/j.1541-0420.2006.00667.x},
	fjournal = {Biometrics. Journal of the International Biometric Society},
	issn = {0006-341X,1541-0420},
	journal = {Biometrics},
	mrclass = {99-01},
	mrnumber = {2345595},
	number = {1},
	pages = {252--258},
	title = {An estimating function approach to inference for inhomogeneous {N}eyman-{S}cott processes},
	volume = {63},
	year = {2007},}

@article{YL11,
	author = {Yue, Yu Ryan and Loh, Ji Meng},
	doi = {10.1111/j.1541-0420.2010.01531.x},
	fjournal = {Biometrics. Journal of the International Biometric Society},
	issn = {0006-341X,1541-0420},
	journal = {Biometrics},
	mrclass = {99-01},
	mrnumber = {2829268},
	number = {3},
	pages = {937--946},
	title = {{B}ayesian semiparametric intensity estimation for inhomogeneous spatial point processes},
	volume = {67},
	year = {2011}}

@article{Z05,
  title={Towards reconciling two asymptotic frameworks in spatial statistics},
  author={Zhang, Hao and Zimmerman, Dale L},
  journal={{B}iometrika},
  volume={92},
  number={4},
  pages={921--936},
  year={2005},
  publisher={Oxford University Press}
}

@article{MOLL07,
author = {M{\o}ller, Jesper and Waagepetersen, Rasmus P.},
title = {Modern Statistics for Spatial Point Processes},
journal = {Scand. J. Statist.},
volume = {34},
number = {4},
pages = {643-684},
year = {2007}
}

@article{CDPS18,
author = {Chen, Victor and Dunlop, Matthew and Papaspiliopoulos, Omiros and Stuart, Andrew},
year = {2018},
month = {03},
pages = {},
title = {Robust MCMC Sampling with Non-{G}aussian and Hierarchical Priors in High Dimensions},
journal = {arXiv preprint arXiv:1803.03344}
}

\clearpage 
\begin{center}
\LARGE \textbf{Supplementary Material}
\end{center}
In this supplement, we present the proofs of all our results, additional simulations and another real data analysis.

\appendix

\counterwithin{equation}{section}
\counterwithin{figure}{section}
\counterwithin{table}{section}
%%%%%%%%%%%%%%%%%%%%%%%%%%%%%%%%%%%%%%%%%%%%%%%%%%%%%%%%%%%%%%%%%%%%%%%%%%%
\section{Further background material}
\label{App:AddMaterial}

In this appendix, we complement the theoretical results from the main paper presenting concrete examples of covariate random fields and Gaussian priors verifying the assumptions of the theorems.

%
%
%

%%%%%%%%%%%%%%%%%%%%%%%%%%%%%%%%%%%%%%%%%%%%%%%%%%%%%%%%%%%%%%%%%%%%%%%%%%
\subsection{Stationary and ergodic Gaussian covariates}\label{Subsec:GaussCov}

Gaussian processes are ubiquitous models for spatially correlated real-valued data in view of both their near-universal capability of describing physical phenomena and their methodological convenience \cite{RW06}. They provide the primary class of covariate random fields satisfying Condition \ref{Cond:ErgCov}, under the following mild assumptions on the associated covariance kernels.

\begin{example}\label{Ex:GaussCov}%%%%%%%%%%%%%%%%%%%%%%%%%%%%%%%%%%%%%%%
	For $h \in \{1,\dots,d\}$, let $\tilde Z^{(h)}:=(\tilde Z^{(h)}(x), \ x\in\R^D)$ be independent, almost surely locally bounded, centred and stationary Gaussian processes with integrable covariance functions, and assume that $\textnormal{Var}[Z^{(h)}(x)]=1$ for all $h \in \{1,\dots,d\}$ and $x\in\R^d$. Set
	\begin{equation}
		\label{Eq:PhiZ}
		Z(x) := [\phi(\tilde Z^{(1)}(x)), \dots, \phi(\tilde Z^{(d)}(x))], \qquad x\in\R^D,
	\end{equation}
	where $\phi$ is the standard normal cumulative distribution function (c.d.f.). It follows that $Z$ is stationary, with uniform stationary distribution $\nu_Z = \textnormal{Un}([0,$ $1]^d)$. Further, by Proposition D.1 in \cite{GKR25}, provided that $\Wcal_n$ is as in \eqref{Eq:SpatialWn}, $Z$ enjoys the concentration inequality \eqref{Eq:ConcIneq} with $K_2 = 1/(c K_1^2)$ for some numerical constant $c>0$ that only depends on $d$ and the covariance functions.
\end{example}%%%%%%%%%%%%%%%%%%%%%%%%%%%%%%%%%%%%%%%%%%%%%%%%%%%%%%%%%%%%

For $Z$ as in Example \ref{Ex:GaussCov}, the point pattern $N^{(n)}$ from \eqref{Eq:PointProc} is a `nonparametric version' of the popular log-Gaussian Cox process \cite{MSW98}, which prescribes a parametric covariate-based intensity function $\rho(z) = \exp(\beta^\top z)$, $z\in\R^d$, for some $\beta\in\R^d$. The assumption that the random vectors $(\tilde Z^{(1)}(x),\dots,\tilde Z^{(d)}(x))$ be standard $d$-variate Gaussian for all $x\in\R^d$ is without loss of generality (in that other covariance structures could be accommodated as well) and reflects the common operation of standardising the covariates before the analysis, e.g., ~\cite[Section 3.2]{G08}. The integrability of the covariance functions is a standard condition that ensures that $Z$ is ergodic; it is satisfied as long as the correlation between covariates at distant locations decays sufficiently fast, which is a common scenario in practice, e.g.,~\cite[p.~58]{C15}. Lastly, the transformation \eqref{Eq:PhiZ} represents a concrete instance of the pre-processing map $\Phi$ mentioned after Condition \ref{Cond:ErgCov}. Similar compactification steps are widespread in spatial statistics; for example, the popular $\texttt{R}$ package \texttt{spatstat} \cite{BRT16} for kernel-based intensity estimation also automatically performs a rescaling to $[0,1]^d$. 

%
%
%

%%%%%%%%%%%%%%%%%%%%%%%%%%%%%%%%%%%%%%%%%%%%%%%%%%%%%%%%%%%%%%%%%%%%%%%%%%%
\subsection{Poisson random tessellations}
\label{App:OtherCov}

The second major class of stationary and ergodic covariate processes satisfying Condition \ref{Cond:ErgCov} are piecewise constant constructions based on random tessellations. These are suited to model discrete heterogeneous objects that appear in a variety of applications, such as the material sciences \cite{T02}.

\begin{example}\label{Ex:PoissTess}%%%%%%%%%%%%%%%%%%%%%%%%%%%%%%%%%%%%%%%%
	Let $\Xi := (\xi_i, \ i\in\N)$ be a standard Poisson point process on $\R^D$, and let $(\Ccal_i, \ i\in\N)$ be the random partition of $\R^D$ given by the associated Voronoi tessellation, that is, 
	$$
	\Ccal_i := \left\{ x\in \R^D : |x - \xi_i| = \inf_{j\in\N }|x - \xi_j|\right\}.
	$$
	For $\nu_Z$ an absolutely continuous probability measure supported on $[0,1]^d$ with bounded p.d.f., set
	$$
	Z(x) = \sum_{i=1}^\infty\zeta_i1_{\Ccal_i}(x), 
	\qquad \zeta_i\iid \nu_Z,
	\qquad x\in\R^D.
	$$
	It follows that $Z$ is stationary, with stationary distribution equal to $\nu_Z$. Further, by Lemma D.6 in \cite{GKR25}, provided that $\Wcal_n$ is as in \eqref{Eq:SpatialWn}, $Z$ enjoys the concentration inequality \eqref{Eq:ConcIneq} with $K_2 = 1/(c K_1)$ for some numerical constant $c>0$.
\end{example}%%%%%%%%%%%%%%%%%%%%%%%%%%%%%%%%%%%%%%%%%%%%%%%%%%%%%%%%%%%%%%

The covariate random field $Z$ from Example \ref{Ex:PoissTess} is piecewise constant over the cells of the random Voronoi tessellations $(\Ccal_i, \ i\in\N)$, with cell-wise values $(\zeta_i, \ i\in\N)$ drawn at random from the stationary distribution $\nu_Z$. The assumption that $\nu_Z$ be supported on $[0,1]^d$ is, again, for concreteness; any other compact subset of $\R^d$ could be taken as well. Note that, in view of the standard Poisson assumption on $\Xi$, large subsets in the random tessellation occur with small probability. This is the key feature determining the ergodicity of $Z$; see \cite[Section 3]{DG20} for further details.

%
%
%

%%%%%%%%%%%%%%%%%%%%%%%%%%%%%%%%%%%%%%%%%%%%%%%%%%%%%%%%%%%%%%%%%%%%%%%%%%%
\subsection{Example of Gaussian priors}
\label{App:GPs}

We provide details on two classes of Gaussian priors satisfying the requirements of Condition \ref{Cond:GPCondition}. We start with the Gaussian wavelet series priors employed in the simulation studies of Section \ref{Sec:Simulations} and Appendix \ref{Suppl:AddSimul}.

\begin{example}[Gaussian wavelet series priors]\label{Ex:GPWav}%%%%%%%%%%%%%
	For $(\psi_\ell, \ \ell\in\N)$ a (single-index re-ordering of an) orthonormal wavelet basis of $L^2([0,1]^d)$, see ~Section \ref{Subsec:Notation} for details, and for any $\alpha>1+d$, let $\Pi$ be the law of the random function
	\begin{equation}
		\label{Eq:GPWav}
		W(z) = \sum_{\ell=1}^\infty \ell^{-\alpha/d} w_\ell \psi_\ell(z), 
		\quad z\in[0,1]^d,
		\quad w_\ell\iid N(0,1).
	\end{equation}
	The RKHS of $\Pi$ is equal to
	$$
	\Hcal
	= \left\{f=\sum_{\ell=1}^\infty f_\ell \psi_\ell : \| f \|^2_{\Hcal}
	:=\sum_{\ell=1}^\infty \ell^{ 2\alpha/d}|f_\ell|^2<\infty \right\},
	$$
	see ~\cite[Section 11.4.5]{GvdV17}, so that it holds, with norm equivalence, that $\Hcal = B^\alpha_2([0,1]^d) = H^\alpha([0,1]^d)$; see Section \ref{Subsec:Notation}. Further, a simple computation yields that $\Enorm^\Pi[\|W\|_{H^{\tilde\alpha}}^2]<\infty$ for all $\tilde\alpha < \alpha - d/2$. Taking any $1+d/2<\tilde \alpha <\alpha - d/2$, this implies that $\Pi(C^1([0,1]^d)) = 1$ by the Sobolev embedding (e.g.,~\cite[p.~370]{GN16}), which shows that $\Pi$ satisfies Condition \ref{Cond:GPCondition}. 
\end{example}%%%%%%%%%%%%%%%%%%%%%%%%%%%%%%%%%%%%%%%%%%%%%%%%%%%%%%%%%%%%%%

We next consider the widely used class of Matérn processes.

\begin{example}[Matérn process priors]\label{Ex:Matern}%%%%%%%%%%%%%%%%%%%%
	For fixed $\alpha>1+d/2$ and $\ell>0$, let $W=(W(z),\ z\in [0,1]^d)$ be the centred and stationary Gaussian process with Matérn covariance kernel
	\begin{equation*}
		C_{\textnormal{Mat}}(z,y) := \frac{2^{1-\alpha}}{\Gamma(\alpha)}\left(\frac{|z-y|\sqrt {2 \alpha}}{\ell}\right)^\alpha
		B_\alpha \left(\frac{|z-y|\sqrt{2\alpha}}{\ell}\right),
		\qquad z,y\in[0,1]^d,
	\end{equation*}
	where $\Gamma$ denotes the Gamma function and $B_\alpha$ is the modified Bessel function of the second kind. By Lemma I.4 in \cite{GvdV17}, $W$ admits a version whose sample paths are almost surely in $C^{\tilde\alpha}([0,1]^d)$ for any $\tilde\alpha<\alpha-d/2$. Taking any $1<\tilde\alpha<\alpha - d/2$, this implies that the law $\Pi$ of such version defines a (centred) Gaussian Borel probability measure supported on $C^1([0,1]^d)$ in view of Lemma I.7 in \cite{GvdV17}.

	Furthermore, by the results in Section 11.4.4 of \cite{GvdV17}, the RKHS $\Hcal$ of $W$ is equal to the set of restrictions to $[0,1]^d$ of functions in the Sobolev space $H^\alpha(\R^d)$, defined via the Fourier transform. This coincides, with norm equivalence, to $H^\alpha([0,1]^d)$, cf.~p.~351 and 371 in \cite{GN16}. This shows that $\Pi$ satisfies Condition \ref{Cond:GPCondition}. 
\end{example}%%%%%%%%%%%%%%%%%%%%%%%%%%%%%%%%%%%%%%%%%%%%%%%%%%%%%%%%%%%%%%%

%%%%%%%%%%%%%%%%%%%%%%%%%%%%%%%%%%%%%%%%%%%%%%%%%%%%%%%%%%%%%%%%%%%%%%%%%
\section{A general posterior contraction result and proofs of Theorems \ref{Theo:GPRates} and \ref{Theo:LaplRates}}
\label{Sec:Proofs}

The proofs of Theorems \ref{Theo:GPRates} and \ref{Theo:LaplRates} are based on the following general posterior contraction result, holding under abstract prior conditions similar to the standard ones from Bayesian nonparametrics, cf.~\cite[Chapter 8]{GvdV17}. It refines ideas from \cite{GKR25}, involving a preliminary step where asymptotic concentration is obtained in the empirical `testing' distance \eqref{Eq:EmpDist}, to be subsequently related to the $L^1([0,1]^d,\nu_Z)$-metric via the exponential concentration inequality \eqref{Eq:ConcIneq}.

\begin{theorem}\label{Theo:GenTheo}%%%%%%%%%%%%%%%%%%%%%%%%%%%%%%%%%%%%%%
	
	For $\rho_0\in\Rcal\cap C^1([0,1]^d)$, consider data $(N^{(n)},Z^{(n)})\sim P^{(n)}_{\rho_0}$ from \eqref{Eq:PointProc} with $\rho=\rho_0$ and with $Z$ satisfying Condition \ref{Cond:ErgCov}. Let $\Pi_n$ be a (possibly $n$-dependent) probability measure supported on $\Rcal\cap C^1([0,1]^d)$ and assume that for some positive sequence $\varepsilon_n\to0$ such that $n\varepsilon_n^2\to\infty$ as $n\to\infty$, 
	\begin{equation}
		\label{Eq:SmallBall}
		\Pi(\rho : \|\rho - \rho_0\|_{L^\infty}\ge \varepsilon_n)
		\ge e^{-L_1n\varepsilon_n^2},
	\end{equation}
	for some $L_1>0$. Further assume that for all $L_2>0$ there exist measurable sets $\Rcal_n\subseteq \Rcal\cap C^1([0,1]^d)$ such that
	\begin{equation}
		\label{Eq:Sieves}
		\Pi_n(\Rcal_n^c)
		\le e^{-L_2n\varepsilon_n^2},
		\qquad \ln \Ncal(\varepsilon_n;\Rcal_n,\|\cdot\|_{L^1})\le L_3n\varepsilon_n^2,
	\end{equation}
	and $\|\rho\|_{C^1}\le L_4$ for all $\rho\in\Rcal_n$ and some $L_3,L_4>0$. Then, for $M>0$ large enough, as $n\to\infty$,
	$$
	\Enorm_{\rho_0}^{(n)}
	\Bigg[\Pi_n\Big(\rho : \|\rho - \rho_0\|_{L^1([0,1]^d,\nu_Z)} 
	> M\varepsilon_n
	\Big| N^{(n)},Z^{(n)}\Big)\Bigg]
	\to 0.
	$$
\end{theorem}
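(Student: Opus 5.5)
The plan is to follow the classical testing-plus-prior-mass scheme of \cite[Chapter 8]{GvdV17}, carried out conditionally on the covariates as in \cite{GKR25}. First I obtain contraction in the empirical distance $d_n(\rho,\rho_0):=n^{-1}\|\lambda^{(n)}_\rho-\lambda^{(n)}_{\rho_0}\|_{L^1(\Wcal_n)}$ from \eqref{Eq:EmpDist}. Then I transfer it to $L^1([0,1]^d,\nu_Z)$ using \eqref{Eq:ConcIneq}, applied uniformly over the sieves $\Rcal_n$.

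Throughout I read \eqref{Eq:SmallBall} as the lower bound $\Pi_n(\rho:\|\rho-\rho_0\|_{L^\infty}\le\varepsilon_n)\ge e^{-L_1n\varepsilon_n^2}$. I also use that $\rho_0=\eta\circ w_0$ is bounded and bounded away from zero on $[0,1]^d$, which holds in the applications.

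\textbf{Evidence lower bound.} Conditionally on $Z^{(n)}$, the log-likelihood ratio from \eqref{Eq:Likelihood} has Kullback--Leibler divergence and variance of order $\int_{\Wcal_n}(\rho-\rho_0)^2(Z(x))/\rho_0(Z(x))\,dx\lesssim n\|\rho-\rho_0\|_{L^\infty}^2$. This uses the elementary bounds for Poisson processes with intensities bounded above and below, e.g.\ \cite{K98}. Hence every $\rho$ in the sup-norm ball of radius $\varepsilon_n$ lies in a KL-type neighbourhood of size $n\varepsilon_n^2$. Since this bound does not depend on $Z^{(n)}$, the standard lemma \cite[Lemma 8.10]{GvdV17} shows that the denominator of \eqref{Eq:Post} exceeds $e^{-(L_1+2)n\varepsilon_n^2}$ with $P^{(n)}_{\rho_0}$-probability tending to one. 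Combined with $\Pi_n(\Rcal_n^c)\le e^{-L_2n\varepsilon_n^2}$ for $L_2>L_1+2$, this gives $\Enorm^{(n)}_{\rho_0}[\Pi_n(\Rcal_n^c|N^{(n)},Z^{(n)})]\to0$.

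\textbf{Tests.} For fixed $Z^{(n)}$ the data form an inhomogeneous Poisson process. For each $\rho_1$ with $d_n(\rho_1,\rho_0)>\varepsilon$, the standard Poisson-process test (as in \cite{GKR25}, built from $\int(\1_{\{\lambda_{\rho_1}>\lambda_{\rho_0}\}})dN$ and using the boundedness of $\rho$ on $\Rcal_n$) has both error probabilities at most $e^{-cn\varepsilon^2}$. This holds uniformly over the $d_n$-ball of radius $\varepsilon/4$ around $\rho_1$.

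I then combine tests over a cover of $\{\rho\in\Rcal_n:\|\rho-\rho_0\|_{L^1(\nu_Z)}>M\varepsilon_n\}$. The plan is to use the $\|\cdot\|_{L^1}$-net $\{\rho_j\}$ from \eqref{Eq:Sieves}, of cardinality $e^{L_3n\varepsilon_n^2}$, and to work on a good event $A_n$ for $Z^{(n)}$. On $A_n$ I require the following:
\begin{enumerate}[label=(\roman*)]
\item $d_n(\rho_j,\rho_0)\ge\tfrac12\|\rho_j-\rho_0\|_{L^1(\nu_Z)}$ for every net point $\rho_j$;
\item $d_n(\rho,\rho_j)\lesssim\varepsilon_n$ whenever $\rho$ belongs to the cell of $\rho_j$.
\end{enumerate}
Property (i) follows from \eqref{Eq:ConcIneq} applied to $f=|\rho_j-\rho_0|$, which lies in $W^{1,\infty}$ with norm at most $K_1:=L_4+\|\rho_0\|_{C^1}$. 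I take $t=M\varepsilon_n/4$ and a union bound, giving failure probability $e^{L_3n\varepsilon_n^2-K_2M^2n\varepsilon_n^2/16}\to0$ for $M$ large. It is essential here that $K_2$ depends only on $K_1$, which is fixed by the $C^1$ bound $L_4$.

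\textbf{Main obstacle.} The hard step is property (ii): controlling the empirical distance within each cell. An $L^1(dz)$-proximity of $\rho$ to $\rho_j$ does not directly control $d_n(\rho,\rho_j)$, because the occupation measure $n^{-1}\int_{\Wcal_n}\delta_{Z(x)}dx$ need not have a bounded density.

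My first attempt is to refine the net into cells defined by simultaneous closeness in $L^1(\nu_Z)$ and in the empirical $L^1$ metric. I would bound the entropy of $\Rcal_n$ in $d_n$ by exploiting the $C^1$ bound. On a grid of mesh $h$, $C^1$-functions are determined up to $L_4h$ by their grid values. I then apply \eqref{Eq:ConcIneq}, with a union bound over the $h^{-d}$ grid cells, to smooth bump functions localising each cell. This shows that on $A_n$ the occupation measure charges every cell by at most a constant times its $\nu_Z$-mass, up to $O(\varepsilon_n)$ errors. With that, $d_n$-distances and $L^1(\nu_Z)$-distances are comparable on $\Rcal_n$ up to $O(\varepsilon_n)$, and property (ii) follows.

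If this comparison proves too lossy, the fallback is to apply the union bound directly to an $\varepsilon_n$-net of $\Rcal_n$ in $d_n$. I would then verify that its log-cardinality is still $O(n\varepsilon_n^2)$ on $A_n$.

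\textbf{Conclusion.} On $A_n$, the tests have type I error $e^{L_3n\varepsilon_n^2-cM^2n\varepsilon_n^2/16}\to0$ and type II errors $e^{-cM^2n\varepsilon_n^2/16}$ over the alternative set. Together with the evidence lower bound and the sieve bound, the standard argument then yields the claim.
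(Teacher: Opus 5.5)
\textbf{Verdict: there is a genuine gap, at your property (ii).} You flag it correctly but do not close it.

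\textbf{Why (ii) fails as stated.} You need a single covariate event $A_n$ on which $d_n(\rho,\rho_j)\lesssim\varepsilon_n$ holds simultaneously for all $\rho$ in every $L^1$-cell. But \eqref{Eq:ConcIneq} controls only one fixed function of bounded $W^{1,\infty}$-norm at a time.

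Your grid repair does not deliver the uniform statement. Bumps localising cells of side $h\asymp\varepsilon_n$ have $W^{1,\infty}$-norm $\asymp h^{-1}$. Rescaling them into the fixed-$K_1$ regime of Condition \ref{Cond:ErgCov} costs a factor $h$ in accuracy. The per-cell errors $\asymp\varepsilon_n h^d$, needed to sum to $O(\varepsilon_n)$ over $h^{-d}$ cells, then come with failure probabilities $e^{-K_2n\varepsilon_n^{2d+4}}$. For $\varepsilon_n=n^{-\alpha/(2\alpha+d)}$ these do not even tend to zero.

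In fact the uniform comparison you aim for is false in general. Take the Poisson--Voronoi covariates of Example \ref{Ex:PoissTess}. The occupation measure $n^{-1}\int_{\Wcal_n}\delta_{Z(x)}dx$ is atomic, with $\asymp n$ atoms drawn from $\nu_Z$. Add to an element of the Besov--Laplace sieve of Section \ref{Subsec:ProofLaplRates} a sum of $C^1$-bounded bumps of height and width $\asymp K\varepsilon_n$, centred at the well-separated atoms. This changes the $L^2$-norm by $O(K^{1+d/2}\varepsilon_n(n\varepsilon_n^d)^{1/2})$, which is $o(\varepsilon_n)$ whenever $n\varepsilon_n^d\to0$; that holds there for every $d\ge3$. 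Yet it changes $d_n$ by $\gtrsim K\varepsilon_n$ (e.g.\ for the exponential link), for arbitrary $K$. Taking subsets of atoms also shows that the $d_n$-entropy of $\Rcal_n$ at scale $\varepsilon_n$ is at least of order $n\gg n\varepsilon_n^2$, so your fallback fails too.

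\textbf{The missing idea: no uniformity in $\rho$ is needed.} Type-II errors enter the posterior bound only through
$\Enorm^{(n)}_{\rho_0}[\int_A (L^{(n)}(\rho)/L^{(n)}(\rho_0))(1-\phi_n)\,d\Pi_n(\rho)]=\int_A\Enorm^{(n)}_\rho[1-\phi_n]\,d\Pi_n(\rho)$,
by Fubini. Here $\Enorm^{(n)}_\rho$ also averages over $Z^{(n)}$, whose law does not depend on $\rho$. Hence the exceptional covariate event may depend on $\rho$.

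This is exactly what Lemma \ref{Lem:GlobalTests} exploits. For fixed $\rho$ with nearest centre $\rho_j$, it bounds $\Enorm^{(n)}_\rho[1-\phi_n]\le\Enorm^{(n)}_\rho[(1-\phi_{\rho_j})1_{\Omega_\rho}]+P_{Z^{(n)}}(\Omega_\rho^c)$, with $\Omega_\rho=\{|d_n(\rho,\rho_j)-\|\rho-\rho_j\|_{L^1([0,1]^d,\nu_Z)}|<M\varepsilon_n/4\}$. Applying \eqref{Eq:ConcIneq} to the single function $|\rho-\rho_j|$ gives $P_{Z^{(n)}}(\Omega_\rho^c)\le e^{-cM^2n\varepsilon_n^2}$, with $c$ uniform in $\rho$ because $K_2$ depends only on the bound $2L_4$. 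The paper uses the same per-$\rho$ device, against the evidence factor $e^{(c_1+L_1)n\varepsilon_n^2}$, to pass from $d_n$ to $L^1([0,1]^d,\nu_Z)$.

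\textbf{How to repair your scheme.} Replace (ii) by this $\rho$-dependent event and keep (i); the union bound over your deterministic net is fine. Take $\phi_n:=1_{A_n}\max_j\phi_{\rho_j}$ and run the last step unconditionally via Fubini. Your scheme then goes through, since all exceptional probabilities are $e^{-Cn\varepsilon_n^2}$ with $C>L_1+2$ once $M$ is large. It would test $L^1(\nu_Z)$-alternatives directly, bypassing the paper's intermediate contraction in $d_n$.

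A minor point: the lower bound on $\rho_0$ that you invoke is not part of the statement. The paper obtains \eqref{Eq:Denom} from Lemma 8.21 of \cite{GvdV17} together with Lemma A.1 and Remark 3.2 of \cite{GKR25}.
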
%%%%%%%%%%%%%%%%%%%%%%%%%%%%%%%%%%%%%%%%%%%%%%%%%%%%%%%%%%%%

Condition \eqref{Eq:Sieves} involves the covering number $\Ncal(\varepsilon_n;\Rcal_n,\|\cdot\|_{L^1})$, defined as the smallest number of $L^1$-balls of radius $\varepsilon_n$ required to contain $\Rcal_n$ in their union. The main improvement of Theorem \ref{Theo:GenTheo} relative to the general result from Section 3.1 in \cite{GKR25} consists in expressing the `metric entropy' inequality in \eqref{Eq:Sieves} with respect to the $L^1$-norm instead of the stronger $L^\infty$-norm from eq.~(3.3) in \cite{GKR25}. This is crucial for deriving optimal posterior contraction rates for Besov-Laplace priors, for which $L^\infty$-complexity bounds are known to be too restrictive, cf.~the discussion after Theorem 1 in \cite{G23}.

Given the abstract result, the proofs of Theorems \ref{Theo:GPRates} and \ref{Theo:LaplRates} boil down to verifying the two prior conditions \eqref{Eq:SmallBall} and \eqref{Eq:Sieves} above, for which we use well-established techniques for rescaled Gaussian and Besov-Laplace priors. We include this verification in \ref{Subsec:ProofGPRates} and \ref{Subsec:ProofLaplRates} for completeness and the convenience of the reader.

\begin{proof}%%%%%%%%%%%%%%%%%%%%%%%%%%%%%%%%%%%%%%%%%%%%%%%%%%%%%%%%%%%%
	We build on the arguments to prove Theorems 3.1 and 3.12 in \cite{GKR25}, first showing that, for $\tilde\Rcal_n := \{\rho \in\Rcal_n: \|\lambda^{(n)}_\rho - \lambda^{(n)}_{\rho_0}\|_{L^1(\Wcal_n)} \le \tilde Mn\varepsilon_n\}$,
	\begin{equation}
		\label{Eq:EmpricalRates}
		\Enorm_{\rho_0}^{(n)}
		\left[\Pi(\tilde\Rcal_n^c | N^{(n)},Z^{(n)})\right]\to 0
	\end{equation}
	as $n\to\infty$ for sufficiently large $\tilde M>0$ to be chosen below. We have,
	\begin{equation}
		\label{Eq:PostRn}
		\Pi(\tilde\Rcal_n^c | N^{(n)},Z^{(n)}) 
		=
		\frac{\int_{\tilde\Rcal_n^c} L^{(n)}(\rho)/L^{(n)}(\rho_0)d\Pi(\rho)}
		{B^{(n)}},
	\end{equation}
	by Bayes' formula \eqref{Eq:Post}, with $B^{(n)}$ the normalisation constant, which, in view of the small ball lower bound \eqref{Eq:SmallBall}, using Lemma 8.21 of \cite{GvdV17} jointly with Lemma A.1 (and Remark 3.2) in \cite{GKR25}, satisfies
	\begin{equation}
		\label{Eq:Denom}
		P_{\rho_0}^{(n)}
		\left( B^{(n)}\le e^{- (c_1 + L_1)n\varepsilon_n^2} \right) 
		= O((n \varepsilon_n^2)^{-1} ), 
	\end{equation}
	as $n\to\infty$ for some constant $c_1>0$ only depending on $\|\rho_0\|_{L^\infty}$. Hence,
	\begin{equation*}
			\Enorm_{\rho_0}^{(n)}[\Pi(\tilde\Rcal_n^c | N^{(n)},Z^{(n)}) ] \le 	\Enorm_{\rho_0}^{(n)}\left[\Pi(\tilde\Rcal_n^c | N^{(n)},Z^{(n)}) 
			1_{\{ B^{(n)} > e^{-(c_1+L_1)n\varepsilon_n^2}\}}\right]
			+O((n\varepsilon_n^2)^{-1}),
	\end{equation*}
	as $n\to\infty$. Next, we upper bound the expectation in the last line by
	\begin{equation*}
		\begin{split}
			\Enorm_{\rho_0}^{(n)}&
			\left[\Pi(\rho\in\Rcal_n :
			\|\lambda^{(n)}_\rho - \lambda^{(n)}_{\rho_0}\|_{L^1(\Wcal_n)} > \tilde Mn\varepsilon_n  | N^{(n)},Z^{(n)}) 
			1_{\{ B^{(n)} > e^{-(c_1 + L_1 )n\varepsilon_n^2} \}}\right]\\
			&+\Enorm_{\rho_0}^{(n)}\left[\Pi(\Rcal_n^c | N^{(n)},Z^{(n)}) 
			1_{\{ B^{(n)} > e^{-(c_1 + L_1 )n\varepsilon_n^2} \}}\right]
		\end{split}
	\end{equation*}
	with the last term being smaller than
	\begin{equation*}
		\begin{split}
			e^{(c_1 + L_1 )n\varepsilon_n^2}
			\int_{\Rcal_n^c}
			\Enorm_{\rho_0}^{(n)}\left[ L^{(n)}(\rho)/L^{(n)}(\rho_0)\right] d\Pi(\rho)
			&= e^{(c_1 + L_1 )n\varepsilon_n^2}
			\Pi(\Rcal_n^c) = o(1),
		\end{split}
	\end{equation*}
	as $n\to\infty$, having used Fubini's theorem, the identity  $\Enorm_{\rho_0}^{(n)} [ L^{(n)}(\rho)/L^{(n)}(\rho_0)]$ $= \Enorm^{(n)}_\rho[1]=1$ and assumption \eqref{Eq:Sieves}. Next, by Lemma \ref{Lem:GlobalTests}, for all sufficiently large $\tilde M>0$ and $n\in\N$, there exists a test $\phi_n$ such that
	\begin{equation}
		\label{Eq:TypeI}
		\max\left\{\Enorm_{\rho_0}^{(n)}[\phi_n],\sup_{\rho\in\Rcal_n : \|\lambda^{(n)}_\rho - \lambda^{(n)}_{\rho_0}\|_{L^1(\Wcal_n)}\ge \tilde M n\varepsilon_n}
		\Enorm^{(n)}_\rho[1-\phi_n ]\right\}
		\le 3e^{-c_2 \tilde M^2 n\varepsilon_n^2},
	\end{equation}
	for some fixed constant $c_2>0$ that only depends on $\|\rho_0\|_{C^1}$. The proof of \eqref{Eq:EmpricalRates} is then concluded by noting that, provided that $\tilde M$ is large enough,
	\begin{equation*}
		\begin{split}
			&\Enorm_{\rho_0}^{(n)}
			\left[\Pi(\rho\in\Rcal_n :
			\|\lambda^{(n)}_\rho - \lambda^{(n)}_{\rho_0}\|_{L^1(\Wcal_n)} > \tilde Mn\varepsilon_n  | N^{(n)},Z^{(n)}) 
			1_{\{ B^{(n)} > e^{-(c_1 + L_1 )n\varepsilon_n^2} \}}\right]
			\\
			& \ \le
			\Enorm_{\rho_0}^{(n)}[\phi_n] +e^{(c_1 + L_1 )n\varepsilon_n^2}\Enorm_{\rho_0}^{(n)}
			\left[\int_{\{\rho\in\Rcal_n :
				\|\lambda^{(n)}_\rho - \lambda^{(n)}_{\rho_0}\|_{L^1(\Wcal_n)} > \tilde Mn\varepsilon_n\}}
			\frac{L^{(n)}(\rho)}{L^{(n)}(\rho_0)}(1-\phi_n)d\Pi(\rho)
			\right]\\
			&\le 3e^{- c_2 \tilde M^2 n\varepsilon_n^2}+ 3e^{-( c_2\tilde M^2 - c_1 - L_1 )n\varepsilon_n^2}
			=o(1),
		\end{split}
	\end{equation*}
	having used \eqref{Eq:TypeI} and Fubini's theorem to deduce that
	\begin{align*}
		\Enorm_{\rho_0}^{(n)}&
		\left[\int_{\{\rho\in\Rcal_n :
			\|\lambda^{(n)}_\rho - \lambda^{(n)}_{\rho_0}\|_{L^1(\Wcal_n)} > \tilde Mn\varepsilon_n\}}
		\frac{L^{(n)}(\rho)}{L^{(n)}(\rho_0)}(1-\phi_n)d\Pi(\rho)
		\right] \\
        & =\int_{\{\rho\in\Rcal_n :
			\|\lambda^{(n)}_\rho - \lambda^{(n)}_{\rho_0}\|_{L^1(\Wcal_n)} > \tilde Mn\varepsilon_n\}}
		\Enorm_{\rho}^{(n)}[1 - \phi_n]d\Pi(\rho)
		\le 3e^{-c_2\tilde M^2 n\varepsilon_n^2}.
	\end{align*}

	For the second part of the proof, we show that, for $\Tcal_n :=\{\rho\in \Rcal_n:\|\rho - \rho_0\|_{L^1([0,1]^d,\nu_Z)} \le M\varepsilon_n\}$, with sufficiently large $M>0$ to be chosen below, $\Enorm_{\rho_0}^{(n)} [\Pi(\Tcal_n^c | N^{(n)}, Z^{(n)}) ] \to0$ as $n\to\infty$. To do so, note that by \eqref{Eq:EmpricalRates},
	\begin{align*}
		\Pi(\Tcal_n^c | N^{(n)}, Z^{(n)}) 
		&= \frac{ \int_{\Tcal^c_n \cap \tilde \Rcal_n} L^{(n)}(\rho)/L^{(n)}(\rho_0)d\Pi(\rho) }
		{ B^{(n)}}  
		+ o_{P^{(n)}_{\rho_0}}(1),
	\end{align*}
	with $B^{(n)}$ the same normalisation constant from \eqref{Eq:PostRn}. Arguing similarly to the previous steps, using \eqref{Eq:Denom} and Fubini's theorem, we then obtain that
	\begin{equation*}
		\Enorm_{\rho_0}^{(n)} \left[\Pi(\Tcal_n^c | N^{(n)}, Z^{(n)})\right] \leq e^{  (c_1 + L_1) n \varepsilon_n^2 }  
		\int_{\Tcal_n^c \cap  \Rcal_n} 
		P_{Z^{(n)}} (\|\lambda^{(n)}_\rho - \lambda^{(n)}_{\rho_0}
		\|_{L^1(\Wcal_n)} \le \tilde Mn\varepsilon_n) d\Pi(\rho)+ o(1).
	\end{equation*}
	Fix any $\rho\in \Tcal_n^c \cap \Rcal_n$. Then, if $\|\lambda^{(n)}_\rho - \lambda^{(n)}_{\rho_0}\|_{L^1(\Wcal_n)} \le \tilde Mn\varepsilon_n $, necessarily 
	$$
	\Delta^{(n)}(\rho) := \| \rho - \rho_0\|_{L^1([0,1]^d,\nu_Z)} 
	- \frac{1}{n}\|\lambda^{(n)}_\rho - \lambda^{(n)}_{\rho_0}
	\|_{L^1(\Wcal_n)} > (M-\tilde M)\varepsilon_n
	\geq \frac{\tilde M}{2}\varepsilon_n  
	$$ 
	upon taking $M > 2\tilde M$. Thus, the expectation of interest is smaller than
	\begin{align*}
		e^{  (c_1+L_1) n \varepsilon_n^2 }  \int_{\Tcal_n^c \cap R_n}
		P_{Z^{(n)}} (\Delta^{(n)}(\rho) > \tilde M\varepsilon_n/2)d\Pi(\rho)+ o(1)  .  
	\end{align*}
	The concentration inequality \eqref{Eq:ConcIneq} applied with $f := |\rho - \rho_0|- n^{-1}\|\lambda^{(n)}_\rho - \lambda^{(n)}_{\rho_0}\|_{L^1(\Wcal_n)}$, for $\rho\in \Rcal_n$, whose (weak) gradient satisfies $ \| \nabla f  \|_{L^\infty([0,1]^d;\R^d)}\le \| \rho  \|_{C^1}+\|  \rho_0  \|_{C^1}\lesssim 1$ now gives that
	$$
	\sup_{\rho\in \Tcal_n^c \cap\Rcal_n}
	P_{Z^{(n)}} \left(\Delta^{(n)}(\rho) > \tilde M\varepsilon_n/2\right)
	\leq e^{- c_3 \tilde M^2 n\varepsilon_n^2 }
	$$
	for some $c_3>0$. The proof is then concluded taking $\tilde M>0$ large enough and combining the last two displays.
\end{proof}%%%%%%%%%%%%%%%%%%%%%%%%%%%%%%%%%%%%%%%%%%%%%%%%%%%%%%%%%%%%%%

The following key lemma shows the existence of consistent tests with uniformly controlled type-II error probabilities. It refines, under Condition \ref{Cond:ErgCov}, Lemma A.3 in \cite{GKR25} by replacing the $L^\infty$-metric entropy inequality assumed therein with the weaker $L^1$-complexity bound from \eqref{Eq:Sieves}.

\begin{lemma}\label{Lem:GlobalTests}%%%%%%%%%%%%%%%%%%%%%%%%%%%%%%%%%%%%%
	Let $\rho_0\in\Rcal\cap C^1([0,1]^d)$, let $Z$ satisfy Condition \ref{Cond:ErgCov}, and let the sets $\Rcal_n$ be as in the statement of Theorem \ref{Theo:GenTheo} for some $L_2,L_3,L_4>0$ and some positive sequence $\varepsilon_n\to0$ such that $n\varepsilon_n^2\to\infty$ as $n\to\infty$. Then, for all sufficiently large $M>0$ and $n\in\N$, there exists a test $\phi_n$ such that 
	$$
	\max\left\{\Enorm_{\rho_0}^{(n)}[\phi_n]
	,\sup_{\rho\in\Rcal_n : \|\lambda_\rho^{(n)} - \lambda_{\rho_0}^{(n)}\|_{L^1(\Wcal_n)}\ge M n\varepsilon_n}
	\Enorm^{(n)}_\rho[1-\phi_n ]\right\}
	\le 3e^{-CM^2 n\varepsilon_n^2}
	$$
	for some constants $C>0$ that only depend on $\|\rho\|_{C^1}$.
\end{lemma}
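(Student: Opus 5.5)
The plan is to argue conditionally on the covariates. Given $Z^{(n)}$, the data $N^{(n)}$ form an inhomogeneous Poisson process with intensity $\lambda^{(n)}_\rho$. On $\Rcal_n$ we have $\|\lambda^{(n)}_\rho\|_{L^\infty}\le L_4$, and $\|\lambda^{(n)}_{\rho_0}\|_{L^\infty}\le\|\rho_0\|_{C^1}$. I would build the test from three ingredients: Bernstein-type single-alternative tests for Poisson processes, an $L^1$-cover of $\Rcal_n$ supplied by \eqref{Eq:Sieves}, and the concentration inequality \eqref{Eq:ConcIneq}, which transfers $L^1([0,1]^d)$-closeness to empirical closeness.

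\emph{Step 1 (single alternative, conditionally on $Z^{(n)}$).} Fix a function $\rho_1\in\Rcal_n$ and set $A:=\{x\in\Wcal_n:\lambda^{(n)}_{\rho_1}(x)>\lambda^{(n)}_{\rho_0}(x)\}$, which is $Z^{(n)}$-measurable. Let $\delta:=\|\lambda^{(n)}_{\rho_1}-\lambda^{(n)}_{\rho_0}\|_{L^1(\Wcal_n)}$. Consider the test that rejects when $|N^{(n)}(A)-\int_A\lambda^{(n)}_{\rho_0}|>\delta/4$, or the analogous event on $A^c$ (at least one of the two sets carries half of $\delta$). Since $N^{(n)}(A)$ is Poisson, Bernstein's inequality bounds the type-I error by $2\exp(-c\,\delta^2/(n\|\rho_0\|_\infty+\delta))$. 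For any $\rho$ with $\|\lambda^{(n)}_\rho-\lambda^{(n)}_{\rho_1}\|_{L^1(\Wcal_n)}\le\delta/8$, the mean of $N^{(n)}(A)$ under $\rho$ still differs from the null mean by at least $\delta/2-\delta/8$. Hence the type-II error is at most $2\exp(-c\,\delta^2/(n(L_4+\|\rho_0\|_\infty)+\delta))$. Because $\delta\le n(L_4+\|\rho_0\|_\infty)$, whenever $\delta\ge Mn\varepsilon_n/2$ both errors are $\le 2e^{-c'M^2n\varepsilon_n^2}$.

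\emph{Step 2 (cover and combined test).} Let $\rho_1,\dots,\rho_J$ be centres of an $\varepsilon_n$-cover of $\Rcal_n$ in $\|\cdot\|_{L^1}$, chosen inside $\Rcal_n$ at the cost of doubling the radius, so that $\ln J\le L_3n\varepsilon_n^2$. Given $Z^{(n)}$, let $\mathcal J(Z^{(n)})$ be the set of indices $j$ whose empirical distance satisfies $\|\lambda^{(n)}_{\rho_j}-\lambda^{(n)}_{\rho_0}\|_{L^1(\Wcal_n)}\ge Mn\varepsilon_n/2$. Define $\phi_n:=\max_{j\in\mathcal J(Z^{(n)})}\phi_{n,j}$, with $\phi_{n,j}$ the tests from Step 1. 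A union bound gives $\Enorm^{(n)}_{\rho_0}[\phi_n]\le 2e^{(L_3-c'M^2)n\varepsilon_n^2}\le e^{-CM^2n\varepsilon_n^2}$ for $M$ large.

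\emph{Step 3 (type II; the main point).} Fix $\rho\in\Rcal_n$ with $\|\lambda^{(n)}_\rho-\lambda^{(n)}_{\rho_0}\|_{L^1(\Wcal_n)}\ge Mn\varepsilon_n$, and pick a centre $\rho_j$ with $\|\rho-\rho_j\|_{L^1}\le2\varepsilon_n$. The difficulty is that $L^1([0,1]^d)$-closeness says nothing directly about the empirical distance $\|\lambda^{(n)}_\rho-\lambda^{(n)}_{\rho_j}\|_{L^1(\Wcal_n)}$. Handling it uniformly over $\rho$ would be impossible, but the supremum sits \emph{outside} the expectation, so only fixed $\rho$ is needed. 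Apply \eqref{Eq:ConcIneq} to $f:=|\rho-\rho_j|$, which satisfies $\|f\|_{W^{1,\infty}}\le2L_4$, with $t:=Mn^{0}\varepsilon_n/16$ (i.e.\ $t=M\varepsilon_n/16$). Since $\nu_Z$ has a bounded density, $\int f\,d\nu_Z\lesssim\varepsilon_n$. So, outside a $Z$-event $G^c$ of probability $\le e^{-K_2M^2n\varepsilon_n^2/256}$, we have $\|\lambda^{(n)}_\rho-\lambda^{(n)}_{\rho_j}\|_{L^1(\Wcal_n)}\le Mn\varepsilon_n/8$ for $M$ large. On $G$, the triangle inequality gives $j\in\mathcal J(Z^{(n)})$, and $\rho$ lies within $\delta_j/8$ of $\rho_j$ in the empirical metric once the bookkeeping of constants is adjusted (e.g.\ use $\delta_j/8$ with $\delta_j\ge 7Mn\varepsilon_n/8$). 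Step 1 then yields $\Enorm^{(n)}_\rho[(1-\phi_{n,j})1_G]\le2e^{-c'M^2n\varepsilon_n^2}$. Adding $P_{Z^{(n)}}(G^c)$ gives the claimed bound $3e^{-CM^2n\varepsilon_n^2}$, with $C$ depending on $L_4$, $\|\rho_0\|_{C^1}$, $K_2$ and the density bound.

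I expect the main obstacle to be this last step: the exceptional set must be controlled per alternative, and the single-centre tests must be robust to the residual empirical perturbation. The remaining steps are routine Poisson-deviation and constant-tracking arguments.
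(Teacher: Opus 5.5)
Your argument is correct and has the same architecture as the paper's proof. Both use an $L^1$-cover of the sieve, local Poisson tests around each centre (the paper imports these from Lemma A.2 of \cite{GKR25} rather than building them from Bernstein's inequality), and $\phi_n$ defined as their maximum. Both then handle each fixed alternative $\rho$ by applying the concentration inequality \eqref{Eq:ConcIneq} to $|\rho-\rho_j|$ together with the tower property.

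The one substantive difference is in the cover. The paper covers the $Z^{(n)}$-dependent set $\{\rho\in\Rcal_n:\|\lambda^{(n)}_\rho-\lambda^{(n)}_{\rho_0}\|_{L^1(\Wcal_n)}\ge Mn\varepsilon_n\}$ directly. Its centres must lie in that set to obtain $\tfrac12 Mn\varepsilon_n\le\tfrac12\|\lambda^{(n)}_{\rho_j}-\lambda^{(n)}_{\rho_0}\|_{L^1(\Wcal_n)}$, so they are random, and applying \eqref{Eq:ConcIneq} to $|\rho-\rho_j|$ is then not immediate. Your deterministic cover of $\Rcal_n$, combined with the $Z^{(n)}$-measurable active set $\mathcal J(Z^{(n)})$, keeps each centre fixed. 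You then recover the separation on the good event via the triangle inequality, which is the cleaner way to make that step rigorous.

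As you anticipated, the constants in Step 3 do not close as written: $Mn\varepsilon_n/8$ exceeds $\delta_j/8\ge 7Mn\varepsilon_n/64$. Taking $t=M\varepsilon_n/32$ with $M$ large relative to the density bound, or using robustness radius $\delta/7$ in Step 1, fixes this.
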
%%%%%%%%%%%%%%%%%%%%%%%%%%%%%%%%%%%%%%%%%%%%%%%%%%%%%%%%%%%%%%

\begin{proof}%%%%%%%%%%%%%%%%%%%%%%%%%%%%%%%%%%%%%%%%%%%%%%%%%%%%%%%%%%%%
	For some sufficiently large $M>0$ to be chosen below, cover the set $\{\rho\in\Rcal_n: \|\lambda^{(n)}_\rho - \lambda^{(n)}_{\rho_0}\|_{L^1(\Wcal_n)}\ge M n\varepsilon_n\}$ via $L^1([0,1],\nu_Z)$-balls of radius $M\varepsilon_n/4$ and centres $(\rho_j, \ j\in\{1,\dots,$ $\Ncal^{(n)}\}$, where, in view of \eqref{Eq:Sieves} and the boundedness assumption on the p.d.f.~of $\nu_Z$, 
	\begin{equation}
		\label{Eq:NBound}
		\Ncal^{(n)} \le \Ncal(\varepsilon_n;\Rcal_n,\|\cdot\|_{L^1})
		\le e^{L_3n\varepsilon_n^2},
	\end{equation}
	provided that $M$ is large enough. For $\ell \in \{1,\dots,\Ncal^{(n)}\}$, by Lemma A.2 in \cite{GKR25}, there exists a test $\phi_{\rho_j}$ such that for all sufficiently large $n$
	\begin{align*}
		\Enorm_{\rho_0}^{(n)}[\phi_{\rho_j}|Z^{(n)}]
		\le
		2e^{-c_1 M^2n\varepsilon_n^2},
	\end{align*}
	for some constant $c_1>0$ that only depends on $\|\rho_0\|_{L^\infty}$, as well as
	\begin{align*}
		\sup_{\rho:\|\lambda^{(n)}_\rho-\lambda^{(n)}_{\rho_j}\|_{L^1(\Wcal_n)}
			\le\|\lambda^{(n)}_{\rho_j} - \lambda^{(n)}_{\rho_0}\|_{L^1(\Wcal_n)}/2}
		\Enorm^{(n)}_\rho[1-\phi_{\rho_j}|Z^{(n)}]
		&\le 2e^{-c_1 M^2n\varepsilon_n^2}.
	\end{align*}
	Set $\phi_n:=\max_{j=1,\dots,\Ncal^{(n)}}\phi_{\rho_j}$. Then, for all sufficiently large $n$, in view of \eqref{Eq:NBound},
	\begin{align*}
		\Enorm_{\rho_0}^{(n)}[\phi_n|Z^{(n)}]
		\le \sum_{j=1}^{\Ncal^{(n)}}\Enorm_{\rho_0}^{(n)}[\phi_{\rho_j}|Z^{(n)}]
		\le 2\Ncal^{(n)}e^{-c_1 M^2n\varepsilon_n^2}
		\le 2e^{-(c_1 M^2 - L_3)n\varepsilon_n^2},
	\end{align*}
	which, provided that $M^2>\max(L_3/c_1,1)$, concludes the derivation of the first claim. Further, since for for each $\rho\in\Rcal_n$ with $\|\lambda^{(n)}_\rho - \lambda^{(n)}_{\rho_0}\|_{L^1(\Wcal_n)}>M n \varepsilon_n$ there exists, by construction, a centre $\rho_j$ with $\|\rho - \rho_j\|_{L^1([0,1]^d,\nu_Z)}\le M\varepsilon_n/4$, on the event
	$$
	\Omega_\rho :=\left\{\left| \frac{1}{n}\|\lambda^{(n)}_{\rho} - \lambda^{(n)}_{\rho_j}\|_{L^1(\Wcal_n)} - \|\rho - \rho_j\|_{L^1([0,1]^d,\nu_Z)} \right|
	<\frac{M\varepsilon_n}{4}\right\},
	$$
	we have
	\begin{equation*}
		\|\lambda^{(n)}_\rho - \lambda^{(n)}_{\rho_j}\|_{L^1(\Wcal_n)} \le \frac{Mn\varepsilon_n}{4} 
		+ n\|\rho - \rho_j\|_{L^1([0,1]^d,\nu_Z)} \le \frac{1}{2} Mn\varepsilon_n
		\le \frac{1}{2} \|\lambda^{(n)}_{\rho_j} -
		\lambda^{(n)}_{\rho_0}\|_{L^1(\Wcal_n)},
	\end{equation*}
	and therefore
	$$
	1_{\Omega_\rho} \Enorm_\rho^{(n)}[
	1 - \phi_{\rho_j}|Z^{(n)}]
	\le 2e^{-c_1 M^2n\varepsilon_n^2}.
	$$
	Conclude by the tower property of conditional expectation that
	\begin{align*}
		\Enorm_\rho^{(n)}[1 - \phi_n]
		%    &\le \Enorm_\rho^{(n)}[(1 - \phi_n)1_{\Omega_\rho}]
		%    + P_{Z^{(n)}}(\Omega_\rho^c) \\
		&\le \Enorm_\rho^{(n)}[(1 - \phi_{\rho_j})1_{\Omega_\rho}]
		+ P_{Z^{(n)}}(\Omega_\rho^c)=\Enorm_\rho^{(n)}[1_{\Omega_\rho} \Enorm_\rho^{(n)}[
		1 - \phi_{\rho_j}|Z^{(n)}]]
		+ P_{Z^{(n)}}(\Omega_\rho^c)\\
		&\le 2e^{-c_1 M^2n\varepsilon_n^2}
		+ P_{Z^{(n)}}(\Omega_\rho^c).
	\end{align*}
	By the exponential concentration inequality \eqref{Eq:ConcIneq}, the latter probability is upper bounded by $e^{-c_2 M^2n\varepsilon_n^2}$ for some constant $c_2>0$ that only depends on $\|\rho_0\|_{C^1}$. Combined with the previous display and the fact that $\|\rho_0\|_{C^1}\le L_4$ for all $\rho\in\Rcal_n$ by assumption, this proves the second claim.
\end{proof}%%%%%%%%%%%%%%%%%%%%%%%%%%%%%%%%%%%%%%%%%%%%%%%%%%%%%%%%%%%%%%

%We conclude this Section by showing that conditions \eqref{Eq:SmallBall} and \eqref{Eq:Sieves} in Theorem \ref{Theo:GenTheo} are satisfied by both prior choices in Sections \ref{Subsec:GPRates} and \ref{Subsec:LaplRates}. Accordingly, we prove Theorems \ref{Theo:GPRates} and \ref{Theo:LaplRates}, assuring optimal posterior contraction rates for both prior choices.

%%%%%%%%%%%%%%%%%%%%%%%%%%%%%%%%%%%%%%%%%%%%%%%%%%%%%%%%%%%%%%%%%%%%%%%%%
\subsection{Proof of Theorem \ref{Theo:GPRates}}
\label{Subsec:ProofGPRates}

Set $\varepsilon_n:= n^{-\alpha/(2\alpha+d)}$. For $\Pi$ a base Gaussian probability measure satisfying Condition \ref{Cond:GPCondition}, let $\tilde \Pi_n$ be the law of the random function $\tilde W_n := (\sqrt n\varepsilon_n)^{-1} W$, $W\sim \Pi$. Lemma 5.2 in \cite{GR22} implies that for all $c_1>0$ there exists sufficiently large $c_2>0$ such that the sets
$$
\Ucal_n:=\{ u = u_1 + u_2 : \|u_1\|_{L^2}\le \varepsilon_n,
\|u_2\|_{H^\alpha}\le c_2, \|u\|_{C^1}\le c_2\}
$$
satisfy $\tilde\Pi_n(\Ucal_n^c)\le e^{-c_1n\varepsilon_n^2}$. Let $\Rcal_n:=\left\{\eta\circ u,\ u\in \Ucal_n\right\}$; then $\Pi_n (\Rcal_n^c)\le \tilde\Pi_n(\Ucal_n^c) \le e^{-c_1n\varepsilon_n^2}$. Further, since $\eta$ is smooth by assumption, it is locally Lipschitz, and $\| \eta\circ u_1 - \eta\circ u_2\|_{L^2}\lesssim \|u_1 -  u_2\|_{L^2}$ for all $u\in\Ucal_n$ and some multiplicative constant that only depends on $c_2$. Thus,
$$
\ln \Ncal(\varepsilon_n;\Rcal_n,\|\cdot\|_{L^1})
\le \ln \Ncal(\varepsilon_n;\Rcal_n,\|\cdot\|_{L^2})
\lesssim \ln \Ncal(\varepsilon_n;\Ucal_n,\|\cdot\|_{L^2}).
$$
The latter is upper bounded by a multiple of $\varepsilon_n^{-d/\alpha}=n\varepsilon_n^2$ in view of the metric entropy estimate in Theorem 4.3.36 of \cite{GN16}, and since $\Ucal_n$ is contained in a $L^2$-enlargement of radius $\varepsilon_n$ of the ball $\{u\in H^\alpha([0,1]^d):\|u\|_{H^\alpha}\le c_2\}$. This verifies the `sieve condition' \eqref{Eq:Sieves}. Also, $\|\eta\circ u\|_{C^1}\lesssim \|u\|_{C^1}\lesssim 1$ for all $u\in\Ucal_n$.

Lastly, recalling that $w_0\in H^\beta([0,1]^d)\subseteq H^\alpha([0,1]^d)=\Hcal$ since $\alpha\le\beta$, by Corollary 2.6.18 of \cite{GN16}, for all sufficiently large $n$,
\begin{align*}
	& \Pi_n(\rho:\|\rho - \rho_0\|_{L^\infty} \le \varepsilon_n)\\ 
     &\ge 
     \tilde \Pi_n\left(w:\|w - w_0\|_{L^\infty} 
	\le \frac{1}{2}\varepsilon_n\right) \\
    & \ge 
     e^{-c_3\|w_0\|_{H^\alpha}^2n\varepsilon_n^2}\Pi\left(w:\|w\|_\infty\le \frac{1}{2}\sqrt n\varepsilon_n^2\right),
\end{align*}
for some $c_3>0$. An application of the centred small ball inequality in Theorem 1.2 of \cite{LL99}, jointly with the aforementioned complexity bound from Theorem 4.3.36 of \cite{GN16}, then shows that the last display is greater than 
$$
e^{-c_3\|w_0\|_{H^\alpha}^2n\varepsilon_n^2}e^{c_4 (\sqrt n \varepsilon_n^2)^{d/(\alpha-d/2)}}
=e^{-c_5\|w_0\|_{H^\alpha}^2n\varepsilon_n^2}
$$ 
for some $c_4,c_5>0$, concluding the proof. \qed

%
%
%

%%%%%%%%%%%%%%%%%%%%%%%%%%%%%%%%%%%%%%%%%%%%%%%%%%%%%%%%%%%%%%%%%%%%%%%%%
\subsection{Proof of Theorem \ref{Theo:LaplRates}}
\label{Subsec:ProofLaplRates}

The proof follows along similar lines to the one of Theorem \ref{Theo:GPRates}, replacing the Gaussian probability measure techniques employed therein with the corresponding results for (rescaled) Besov-Laplace priors. For $W$ as in \eqref{Eq:BaseLapalPrior}, let $\tilde \Pi_n$ be the law of the random function $\tilde W_n:= (n\varepsilon_n^2)^{-1}W$. Then, for any $c_1>0$, Lemma 9 in \cite{G23} (and its proof) and the Besov embedding $B^{1+d+\kappa}_1([0,1]^d)\subset C^1([0,1]^d)$, holding for all $\kappa>0$, e.g.,~\cite[p.~370]{GN16}, jointly imply that the sets
$$
\Ucal_n=\{ u = u_1 + u_2 : \|u_1\|_{L^2}\le \varepsilon_n,
\|u_2\|_{B^\alpha_{11}}\le c_2, \|u\|_{C^1}\le c_2\}
$$
satisfy $\tilde\Pi_n(\Ucal_n^c)\le e^{-c_1n\varepsilon_n^2}$ provided that $c_2$ is large enough. Arguing exactly as in the proof of Theorem \ref{Theo:GPRates}, the sets
$\Rcal_n=\left\{\eta\circ u,\ u\in \Ucal_n\right\}$ are then seen to verify the conditions of Theorem \ref{Theo:GenTheo}. Further, recalling that $w_0\in B^\beta_1([0,1]^d)\subseteq B^\alpha_1([0,1]^d)$ since $\alpha\le\beta$, an application of the `de-centring' inequality (32) in \cite{G23} and the $L^\infty$-small ball lower bound (34) in the same reference yield
\begin{align*}
	\Pi_n(\rho:\|\rho - \rho_0\|_{L^\infty} \le \varepsilon_n)
	&\ge \tilde\Pi_n\left(w:\|w - w_0\|_{L^\infty} 
	\le \frac{1}{2}\varepsilon_n\right) \\
    & \ge e^{-\|w_0\|_{B_1^\alpha}^2n\varepsilon_n^2}
	\Pi\left(w:\|w\|_\infty\le \frac{1}{2}n\varepsilon_n^3\right)\\
	&\ge e^{-\|w_0\|_{B_1^\alpha}^2n\varepsilon_n^2}
	e^{-c_1(n\varepsilon_n^3)^{d/(\alpha - d)}}
	=e^{-c_2n\varepsilon_n^2}
\end{align*}
for some $c_1,c_2>0$, concluding the proof.\qed

%
%
%
%
%

%%%%%%%%%%%%%%%%%%%%%%%%%%%%%%%%%%%%%%%%%%%%%%%%%%%%%%%%%%%%%%%%%%%%%%%%%%%
\section{Posterior sampling algorithms}
\label{App:Algorithms}

For the considered Gaussian and Besov-Laplace priors, the posterior arising from the observation model \eqref{Eq:PointProc} is not available in closed form. We then employ MCMC techniques to draw approximate posterior samples, concretely computing posterior means and credible sets via their MCMC counterparts. 

%
%
%

%%%%%%%%%%%%%%%%%%%%%%%%%%%%%%%%%%%%%%%%%%%%%%%%%%%%%%%%%%%%%%%%%%%%%%%%%%%
\subsection{The pCN algorithm}
\label{App:pCN}

In the case of the rescaled Gaussian priors $\Pi_n$ from Section \ref{Subsec:GPRates}, we resort to the pre-conditioned Crank-Nicholson (pCN) algorithm, which is a dimension-robust MCMC technique of Metropolis-Hastings type first developed by \cite{CSRW13}. Starting from some initialisation $\omega_0$ (which we set equal to the `cold start' $\omega_0 = 0$ throughout our experiments), the method iterates the following steps:
\begin{itemize}
	\item Draw a sample $\xi\sim \Pi$ from the base Gaussian prior, and construct the rescaled proposal $\omega_*:= \sqrt{1-2b}$ $\omega_{s-1} + \sqrt{2b} \ n^{-d/(4\alpha+2d)} \xi$, where $b\in(0,1/2)$ is a fixed step-size.
	\item Set
	$$
	\omega_{s}:=
	\begin{cases}
		\omega_*, & \textnormal{with probability}\ \min\left\{1, \frac{L^{(n)}(\eta\circ\omega_*)}{L^{(n)}(\eta\circ\omega_{s-1})}\right\},\\
		\omega_{s-1}, & \text{otherwise},
	\end{cases}
	$$
\end{itemize}
with $L^{(n)}$ the likelihood from \eqref{Eq:Likelihood} and $\eta$ the link function selected within the prior specification. The resulting Markov chain is reversible with respect to the posterior distribution, e.g.,~\cite[Proposition 1.2.2]{N23}. Further, the pCN acceptance probabilities are generally known to be stable with respect to the discretisation resolution, yielding favourable mixing properties for statistical applications with infinite-dimensional unknowns; see \cite{CSRW13}.

The first pCN step requires drawing $\xi\sim\Pi$, which is typically straightforward under a suitable discretisation scheme. For example, for the Gaussian wavelet series described in Example \ref{Ex:GPWav}, this naturally amounts to truncating the expansion \eqref{Eq:GPWav} at some sufficiently high level $L\in\N$ and then sampling $L$ independent standard normal random variables. For the Matérn process priors from Example \ref{Ex:Matern}, a discretisation scheme based on piece-wise linear interpolation functions over a pre-specified grid on $[0,1]^d$ can be employed, as described e.g.,~in \cite[p.~670]{MOLL07}.

The second operation requires evaluation of the proposal likelihood. This is analytically intractable, as it involves integration of the associated spatial intensity over $\Wcal_n$, which is generally not available in closed form. To address this issue, we employ standard numerical integration; specifically, piece-wise polynomial quadrature. In the prototypical spatial statistics applications of covariate-driven point processes, such as the ones developed in Section \ref{Sec:RealData} and Appendix \ref{Suppl:RealData}, where the number of covariates is relatively low, we found this approach to strike a satisfactory balance between accuracy and efficiency. In our computations, each instance of numerical integration was implemented based on rectangular grids of up to 2,500 nodes, depending on the area of the observation window. For models without covariates, we refer to \cite{AMM09} for more refined `exact' MCMC methods based on data augmentation. Extending these to the present setting is an interesting open problem.

%
%
%

%%%%%%%%%%%%%%%%%%%%%%%%%%%%%%%%%%%%%%%%%%%%%%%%%%%%%%%%%%%%%%%%%%%%%%%%%%%
\subsection{The whitened wpCN algorithm}
\label{App:wpCN}

The whitened pCN (wpCN) algorithm \cite{CDPS18} is an extension of the base method described above that is suited to priors that can be expressed as a transformation of a Gaussian white noise. For the Besov-Laplace prior from Section \ref{Subsec:LaplRates}, this is based on the observation that the rescaled random function $n^{-d/(2\alpha+d)}W$ from \eqref{Eq:RescaledLaplPrior} is equal in distribution to
\begin{equation}
	\label{Eq:WhiteXi}   
	T^{(n)}(\xi)(z) := \sum_{\ell=1}^\infty T^{(n)}_\ell(w_\ell) \psi_\ell(z), 
	\qquad z\in[0,1]^d,
	\qquad w_\ell\iid N(0,1),
\end{equation}
where 
\begin{equation}
	\label{Eq:Xi} 
	\xi := \sum_{\ell=1}^\infty w_\ell \psi_\ell, 
	\qquad w_\ell\iid N(0,1),
\end{equation}
defines a Gaussian white noise indexed by $[0,1]^d$ and the `whitening transformation' $T$ is given by
$$
T^{(n)}_\ell(w_\ell) := n^{-\frac{d}{2\alpha+d}}\ell^{-(\frac{\alpha}{d}-\frac{1}{2})} \sgn(w_\ell) \left[ -\ln(2 - 2\phi(|w_\ell|) \right],
\qquad \ell\in\N.
$$

Then, starting from an initial white noise sample $\xi_0$ (amounting to drawing independent standard normal random variables), and the corresponding transformed sample $\omega_0 = T^{(n)}(\xi_0)$, the wpCN algorithm repeats the following three operations:
\begin{itemize}
	\item Construct the whitened proposal $\xi_* :=\sqrt{1-2b}$ $\xi_{s-1} + \sqrt{2b} \zeta$, where $b\in(0,1/2)$ is a fixed step-size and $\zeta$ is an independent Gaussian white noise.
	\item Set
	$$
	\xi_{s}:=
	\begin{cases}
		\xi, & \textnormal{with probability}\ \min\left\{1, \frac{L^{(n)}(\eta\circ T^{(n)}(\xi_*))}
		{L^{(n)}(\eta\circ \omega_{s-1})}\right\},\\
		\xi_{s-1}, & \text{otherwise}.
	\end{cases}
	$$
	\item Set $\omega_s := T^{(n)}(\xi_s)$.
\end{itemize}
The first step can be straightforwardly implemented by truncating the series in \eqref{Eq:WhiteXi} and \eqref{Eq:Xi} at some pre-specified level $L\in\N$. The second operation necessitates the evaluation of the proposal likelihood, for which we again employ numerical integration, similar to the pCN method described above.

%
%
%

%%%%%%%%%%%%%%%%%%%%%%%%%%%%%%%%%%%%%%%%%%%%%%%%%%%%%%%%%%%%%%%%%%%%%%%%%%%
\subsection{Metropolis-within-Gibbs algorithms}
\label{App:AdaptiveMCMC}

For the implementation of the hierarchical Gaussian wavelet series prior and hierarchical Besov-Laplace priors considered in Section \ref{Subsec:Adaptive}, where $\alpha\sim\text{Exp}(1)$, each iteration of the above pCN and wpCN algorithms must be augmented with an additional MCMC-based sampling step to account for the uncertainty in the hyper-parameter $\alpha$. Starting from an initial white noise sample $\xi_0$ and the initialiser $\alpha_0>0$ for $\alpha$, the resulting  Metropolis-within-Gibbs sampling algorithms then iterate the following steps:
\begin{itemize}
	\item Draw the whitened update $\xi_s$ as in the wpCN algorithm from \ref{App:wpCN}, where for Besov-Laplace priors, $T^{(n)}=T^{(n)}_{\alpha_{s-1}}$ is given by \eqref{Eq:WhiteXi} with $\alpha = \alpha_{s-1}$, while for Gaussian priors it is given by $T^{(n)}_{\alpha_{s-1},\ell}(w_\ell) = n^{-\frac{d}{4\alpha+2d}}\ell^{-\frac{\alpha}{d}}w_\ell$. Set $\omega_{s} = T^{(n)}_{\alpha_{s-1}}(\xi_s)$.
	
	\item Draw the update $\alpha_s$ via a simple random-walk Metropolis-Hastings procedure:
	\begin{itemize}
		\item Construct the proposal $\alpha_* := \max\{\alpha_{s-1} + c Z,0\}$, where $c > 0$ is a fixed step-size and $Z$ is an independent standard Gaussian random variable.
		\item Set
		$$
		\alpha_s:=
		\begin{cases}
			\alpha_*, & \textnormal{with probability}\ 
			\min\left\{1, \frac{L^{(n)}(\eta \circ T^{(n)}_{\alpha^*}(\xi_s))}
			{L^{(n)}(\eta \circ T^{(n)}_{\alpha_{s-1}}(\xi_s))} 
			\times \frac{\pi( \alpha_*)}{\pi(\alpha_{s-1})}\right\},\\
			\alpha_{s-1}, & \text{otherwise},
		\end{cases}
		$$
		with $\pi$ the p.d.f.~of the hyper-prior of $\alpha$.
	\end{itemize}
\end{itemize}

Given $\alpha_{s-1}$, the update of the high-dimensional parameters $\xi_s$ and $\omega_s$ thus remains essentially unchanged with respect to the pCN and wpCN algorithms, with the exception that the regularity-dependent transformation $T^{(n)}_{\alpha_{s-1}}$ now also varies throughout the run. The update to $\alpha_{s}$ involves the additional likelihood evaluation $L^{(n)}(\eta \circ T^{(n)}_{\alpha^*}(\xi_s))$. This is tackled via numerical integration, so that the overall cost of the second updating step is comparable to the first. Per iteration run times for the Metropolis-within-Gibbs algorithm are thus roughly double the ones for the base pCN and wpCN routines, if all other computational tuning parameters (e.g.,~number of nodes for numerical integration) are left unchanged.

\section{Additional simulation studies}
	\label{Suppl:AddSimul}
	
	We expand the simulation studies presented in Section \ref{Sec:Simulations}, considering bi-dimensional scenarios with both spatially homogeneous and inhomogeneous ground truths.
	
	With $\Wcal_n$ as in \eqref{Eq:SpatialWn}, we simulate bivariate covariates $Z^{(n)}(x) = (Z_1^{(n)}(x),$ $Z_2^{(n)}(x))$, $x\in\Wcal_n$, taking $Z_1^{(n)}:=(Z_1^{(n)}(x),\ x \in \Wcal_n)$ as in the univariate experiments, and setting $Z_2^{(n)}:=(Z_2^{(n)}(x),\ x \in \Wcal_n)$ equal to an independent (centred and unit variance) square-exponential Gaussian process with larger length-scale 1.5, transformed in accordance with Example \ref{Ex:GaussCov}. We then consider two ground truths defined on the covariate space $[0,1]^2$, respectively:
	\begin{enumerate}
		\item A spatially homogeneous covariate-based intensity function,
		\begin{align}
			\label{Eq:2D_skn}
			\rho_0(z_1,z_2) = 100 f_{SN}\left(z_1,z_2; \ (0.4, 0.6), \ 0.05 I_2, \ (3,-2) \right),
		\end{align}
		with $(z_1,z_2)\in[0,1]^2$, where $f_{SN}$ denotes the (bivariate) skew-normal p.d.f.~and $I_2$ is the identity matrix in $\R^{2,2}$, cf.~Figure \ref{Fig:2D_skn};

		\item A spatially inhomogeneous function with a bi-dimensional blocky component and a localised spike, 
		\begin{equation}
			\label{Eq:2D_bs}
			\rho_0(z) = \frac{100}{0.42} [ a S_{\text{block}}(z; b, c) + h S_{\text{spike}}(z; u, w) ],
		\end{equation}
		with $z=(z_1,z_2)\in[0,1]^2$, height $h = 20$, amplitude $a = 4$, and where the two components are defined by
		\begin{align*}
			S_{\text{block}}(z; b, c) & = \frac{1}{2} \prod_{h = 1}^2 (1 + \sgn(z_h - b_h)) (1 - \sgn(z_h - c_h));\\
			S_{\text{spike}}(z; u, w) & = (1 + |(z-u)/w|)^{-4},
		\end{align*}
		with block extremes $b = (0.1, 0.2)$ and $c = (0.3, 0.5)$, spike location $u=(0.7,0.8)$,  and width $w = 0.1$. See the last panel of Figure \ref{Fig:2D_bs}.
	\end{enumerate}

	Both the above ground truths have been rescaled so that the expected number of points per unit area within the observation model \eqref{Eq:PointProc} is equal to $100$. For each scenario, we proceed computing the posterior means $\hat\rho^{(n)}_\Pi$ associated to Gaussian wavelet series and Besov-Laplace priors, employing bi-dimensional, boundary reflected, `Symmelet-8' basis functions (implemented in the $\texttt{R}$ package $\texttt{wavethresh}$) and the same oracle tuning step for the prior regularity as the one taken in the one-dimensional experiments. Bivariate kernel estimates $\hat\rho^{(n)}_\kappa$ were also obtained (through the off-the-shelf routine included in the $\texttt{R}$ package $\texttt{spatstat}$ \cite{BRT16}).

	The results for the spatially homogeneous ground truth \eqref{Eq:2D_skn}, averaged over 50 replications of each experiment, are summarised in Table \ref{Tab:2D_skn}. For all three considered methods, the obtained estimation errors monotonically decay as the area increases. In particular, Gaussian and Besov-Laplace priors performed similarly, as expected in the spatially homogeneous case in view of Theorems \ref{Theo:GPRates} and \ref{Theo:LaplRates}. The kernel method also achieved satisfactory reconstructions, but overall scored higher estimation errors across all values of $n$ except for the smallest one, $n=1$. A depiction of the posterior means for the employed Gaussian series priors is provided in the first three panels of Figure \ref{Fig:2D_skn}, where a progressive improvement in the visual agreement with the ground truth \eqref{Eq:2D_skn} (shown in the last panel) is displayed.

	\begin{table}[H]%%%%%%%%%%%%%%%%%%%%%%%%%%%%%%%%%%%%%%%%%%%%%%%%%%%%%%%%%%%%
		\caption{Averaged relative $L^1$-estimation errors 
			$\|\hat\rho-\rho_0\|_{L^1}/\|\rho_0\|_{L^1}$ (with standard deviations in parentheses) over $50$ replications in the bivariate homogeneous scenario with ground truth \eqref{Eq:2D_skn}. The table compares Gaussian wavelet series and Besov--Laplace priors (with oracle-tuned regularity parameters $\alpha=1.5$ and $\alpha=1$, respectively) against a kernel-based estimator. For reference, $\|\rho_0\|_{L^1}=100$.}
		\centering
		\hrule
		\begin{tabular}{cccc}
			\makecell{\\
				$n$} & 
			\makecell{Gaussian ($\alpha = 1.5$)\\
				$\frac{\|\hat\rho_\Pi^{(n)} - \rho_0\|_{L^1}}{\|\rho_0\|_{L^1}}$}
			& 
			\makecell{Laplace ($\alpha = 1$)\\
				$\frac{\|\hat\rho_\Pi^{(n)} - \rho_0\|_{L^1}}{\|\rho_0\|_{L^1}}$}
			& \makecell{\\
				$\frac{\|\hat\rho_\kappa^{(n)} - \rho_0\|_{L^1}}{\|\rho_0\|_{L^1}}$} \\
			1   & 0.54 (0.23) & 0.50 (0.08) & 0.52 (0.18) \\
			4   & 0.33 (0.03) & 0.29 (0.04) & 0.42 (0.14) \\
			16  & 0.21 (0.03) & 0.19 (0.02) & 0.31 (0.04) \\
			64  & 0.14 (0.01) & 0.13 (0.01) & 0.28 (0.02) \\
			256 & 0.10 (0.01) & 0.10 (0.01) & 0.26 (0.01) \\
		\end{tabular}
		\hrule
		\label{Tab:2D_skn}
	\end{table}%%%%%%%%%%%%%%%%%%%%%%%%%%%%%%%%%%%%%%%%%%%%%%%%%%%%%%%%%%%%%%%%%

	\begin{figure}[H]%%%%%%%%%%%%%%%%%%%%%%%%%%%%%%%%%%%%%%%%%%%%%%%%%%%%%%%%%%%
		\centering
		
		\includegraphics[width=\linewidth]{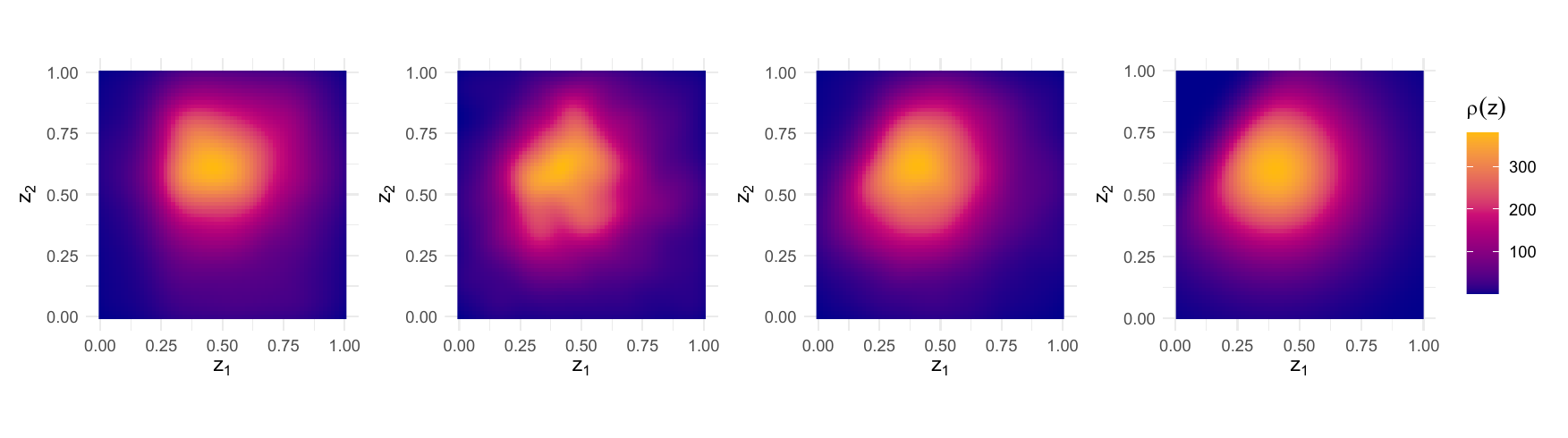}
		\caption{Posterior mean reconstructions for the Gaussian wavelet series prior in the bivariate homogeneous setting \eqref{Eq:2D_skn}, shown for increasing observation windows $n\in\{4,16,256\}$, alongside the true intensity (last panel).}
		\label{Fig:2D_skn}
	\end{figure}%%%%%%%%%%%%%%%%%%%%%%%%%%%%%%%%%%%%%%%%%%%%%%%%%%%%%%%%%%%%%%%%

	Table \ref{Tab:2D_bs} reports the average $L^1$-estimation errors obtained in the bivariate spatially inhomogeneous scenario with covariate-based intensity function \eqref{Eq:2D_bs}. Similar to the experiments presented in Section \ref{Subsec:1DInhom}, the kernel method appears to be unsuited to recover the localised structural features of the ground truth in this case. Across all values of $n$, the best performance was scored by the Besov-Laplace prior, with particularly robust improvements over the Gaussian prior for larger areas. These results are visualised in Figure \ref{Fig:2D_bs}, where the posterior means for the Besov-Laplace prior are seen to progressively better recover both the blocky component and the spike of the ground truth \eqref{Eq:2D_bs} (shown in the last panel).

	\begin{table}[H]%%%%%%%%%%%%%%%%%%%%%%%%%%%%%%%%%%%%%%%%%%%%%%%%%%%%%%%%%%%%
		\centering
		\caption{Averaged relative $L^1$-estimation errors 
			$\|\hat\rho-\rho_0\|_{L^1}/\|\rho_0\|_{L^1}$ (with standard deviations in parentheses) over $50$ replications in the bivariate inhomogeneous scenario with ground truth \eqref{Eq:2D_bs}. The comparison includes Gaussian and Besov--Laplace wavelet priors (both using oracle-tuned regularity parameter $\alpha=1$) and a kernel estimator. For reference, $\|\rho_0\|_{L^1}=100$.}
		\hrule
		\begin{tabular}{cccc}
			\makecell{\\
				$n$} & 
			\makecell{Gaussian ($\alpha = 1$)\\
				$\frac{\|\hat\rho_\Pi^{(n)} - \rho_0\|_{L^1}}{\|\rho_0\|_{L^1}}$}
			& 
			\makecell{Laplace ($\alpha = 1$)\\
				$\frac{\|\hat\rho_\Pi^{(n)} - \rho_0\|_{L^1}}{\|\rho_0\|_{L^1}}$}
			& \makecell{\\
				$\frac{\|\hat\rho_\kappa^{(n)} - \rho_0\|_{L^1}}{\|\rho_0\|_{L^1}}$} \\
			1   & 0.88 (0.14) & 0.80 (0.16) & 0.87 (0.09) \\
			4   & 0.73 (0.09) & 0.72 (0.24) & 0.85 (0.06) \\
			16  & 0.62 (0.09) & 0.60 (0.14) & 0.81 (0.04) \\
			64  & 0.59 (0.08) & 0.49 (0.09) & 0.77 (0.01) \\
			256 & 0.45 (0.05) & 0.39 (0.01) & 0.77 (0.01) \\
		\end{tabular}
		\hrule
		\label{Tab:2D_bs}
	\end{table}%%%%%%%%%%%%%%%%%%%%%%%%%%%%%%%%%%%%%%%%%%%%%%%%%%%%%%%%%%%%%%%%%

	\begin{figure}[H]%%%%%%%%%%%%%%%%%%%%%%%%%%%%%%%%%%%%%%%%%%%%%%%%%%%%%%%%%%%
		\centering
		\includegraphics[width=\linewidth]{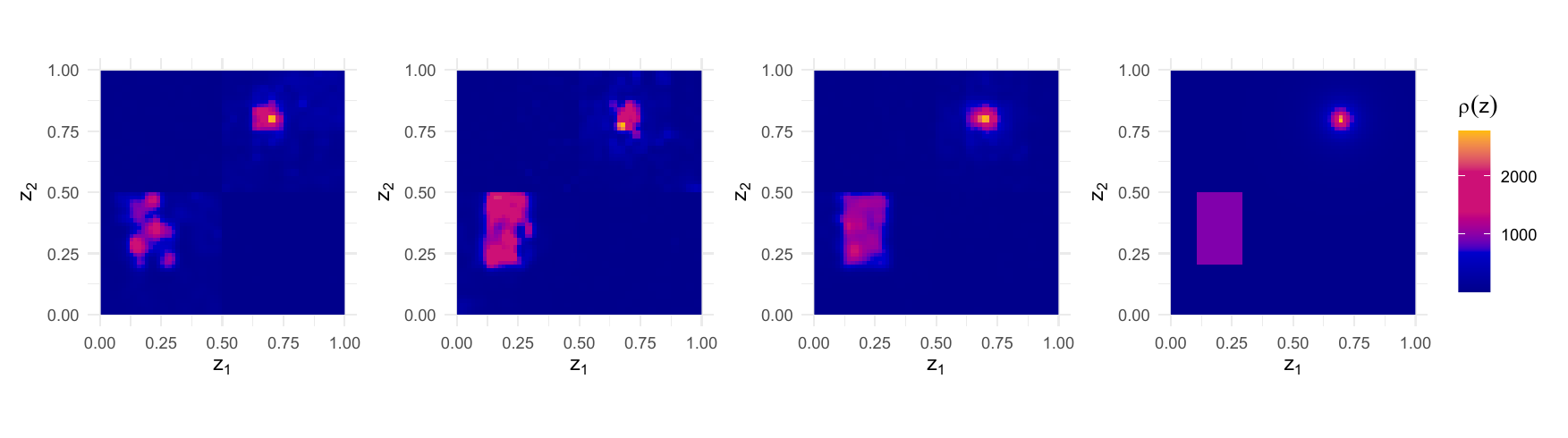}
		\caption{Posterior mean reconstructions under the Besov--Laplace prior for the bivariate inhomogeneous intensity \eqref{Eq:2D_bs}, shown for increasing observation windows $n\in\{4,16,256\}$, together with the true intensity (last panel).}
		\label{Fig:2D_bs}
	\end{figure}%%%%%%%%%%%%%%%%%%%%%%%%%%%%%%%%%%%%%%%%%%%%%%%%%%%%%%%%%%%%%%%%

	%%%%%%%%%%%%%%%%%%%%%%%%%%%%%%%%%%%%%%%%%%%%%%%%%%%%%%%%%%%%%%%%%%%%%%
	\section{Applications to a Canadian wildfire dataset}
	\label{Suppl:RealData}
	
	Forest fires are frequent and severe natural disturbances in Canada. A powerful monitoring system has long been maintained, with extensive data on the wildfire locations, their severity and extension, as well as detailed environmental information publicly available at the Canadian Wildland Fire Information System website (\url{http://cwfis.cfs.nrcan.gc.ca/home}). Several studies have highlighted that the wildfire activity is predominantly influenced by meteorological conditions such as long periods without rain, high temperatures and strong winds; see \cite{JMS12,BGMMM20,KPDO23}, among others.

	Here, we examine a dataset, previously investigated by \cite{BGMMM20} using kernel methods, comprising the aggregate hotspots over the month of June 2015 (which is within the peak of the Canadian wildfire season, lasting from late April to August), and location-specific temperatures and precipitation levels. Specifically, to avoid extremes, the third quartile of the temperatures registered in June 2015 and the average daily precipitations are used. The data is visualised in Figure \ref{Fig:can_fires}. Based on these, we quantitatively assess the influence of temperatures and precipitation on the intensity of wildfires via covariate-based nonparametric Bayesian intensity estimation. As for the real data analysis from Section \ref{Sec:RealData}, we focus on Besov-Laplace priors to better capture potentially localised features in the covariate-based intensity function and the point pattern, such as the distinct triangular region with little to no hotspots in the central-southern part of Canada, which is mostly covered by prairies and croplands, and is therefore significantly less subject to wildfires.

	\begin{figure}[H]%%%%%%%%%%%%%%%%%%%%%%%%%%%%%%%%%%%%%%%%%%%%%%%%%%%%%%%%%%%
		\centering
		\includegraphics[width=\linewidth]{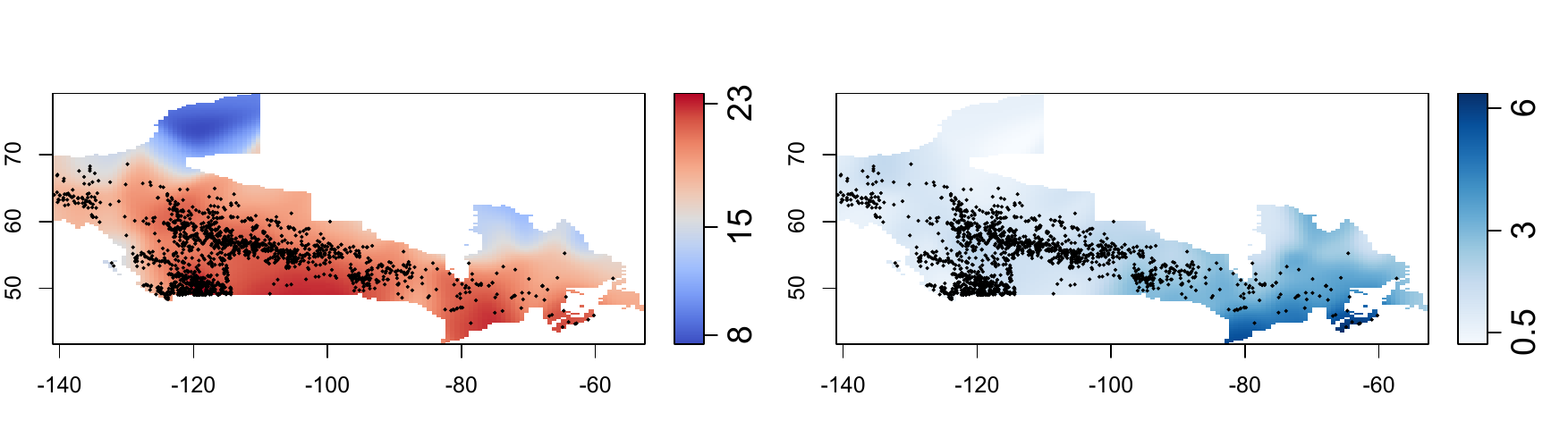}
		\caption{Environmental covariates and wildfire locations in Canada for June 2015. The left panel displays the third quartile of daily temperatures (in $^\circ$C), while the right panel shows the average daily precipitation (in mm/m$^2$) over the observation window. Black points indicate recorded wildfire hotspots. The spatial distribution of points suggests a pronounced dependence of wildfire activity on both temperature and precipitation, with clear clustering in warm and dry regions and a large low-activity area in central-southern Canada.}
		\label{Fig:can_fires}
	\end{figure}%%%%%%%%%%%%%%%%%%%%%%%%%%%%%%%%%%%%%%%%%%%%%%%%%%%%%%%%%%%%%%%%

	We start investigating the influence of each covariate individually. The posterior means of the temperature- and precipitation-based intensity function are shown in Figure \ref{Fig:canada_rho_1d}, alongside benchmark kernel estimates. In line with the literature, these detect a positive association between higher temperatures and increased risk of wildfires, with a sharp rise between $17^\circ$C and $22^\circ$C. Both methods detect a prominent local minimum around $23^\circ$C, which is possibly a spurious effect determined by the high temperatures registered in the central-southern flat region. Precipitation levels were found to have a strong negative impact, particularly above 1.5 $\text{mm/m}^2$.

	\begin{figure}[H]%%%%%%%%%%%%%%%%%%%%%%%%%%%%%%%%%%%%%%%%%%%%%%%%%%%%%%%%%%%
		\centering
		\includegraphics[width=\linewidth]{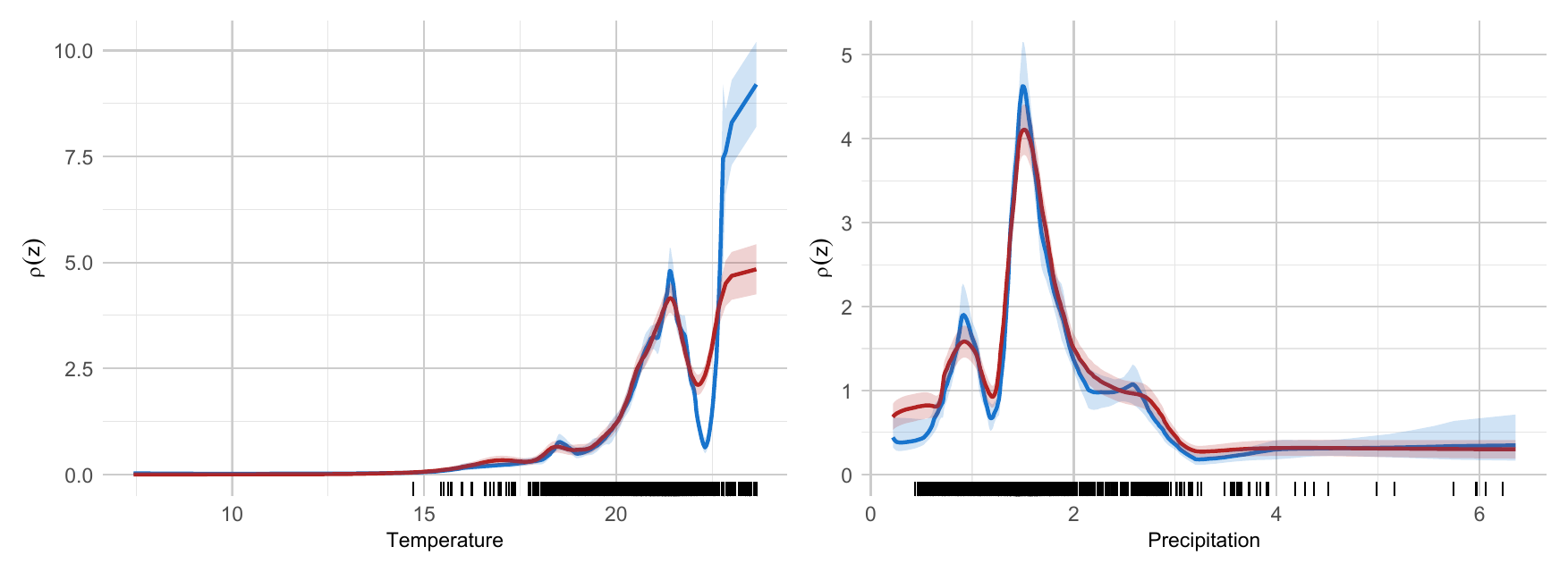}
		\caption{Estimated covariate-dependent wildfire intensity as a function of temperature (left) and precipitation (right). The solid blue curves represent posterior means under the Besov--Laplace prior with regularity parameter $\alpha=1$, while the shaded regions denote pointwise $95\%$ credible intervals. Kernel-based estimates from \cite{BGMMM20} are shown in red. The results indicate a positive association between temperature and wildfire intensity, with a marked increase between $17^\circ$C and $22^\circ$C, and a strong negative effect of precipitation, particularly beyond approximately $1.5$ mm/m$^2$. A local minimum around $23^\circ$C is detected by both methods and may reflect spurious variability induced by spatial heterogeneity in the data.}
		\label{Fig:canada_rho_1d}
	\end{figure}%%%%%%%%%%%%%%%%%%%%%%%%%%%%%%%%%%%%%%%%%%%%%%%%%%%%%%%%%%%%%%%

	We proceed fitting the full model based on both meteorological covariates. The posterior mean and kernel estimate are reported in Figure \ref{Fig:canada_rho_2d}. They both identify higher intensities in the top left corner of the covariate space, corresponding to warmer and drier conditions. Compared to the kernel method, the result obtained with the Besov-Laplace prior detects overall stronger intensities, and notably sharper transitions between the high-intensity zones from the low-intensity ones. Further, the posterior mean also clearly chargers low values to the localised region with temperatures above $22^\circ$C and precipitation levels below 1 mm/$\text{m}^2$, which characterise the triangular central-southern area not subject to wildfires in Figure \ref{Fig:can_fires}.

	\begin{figure}[H]%%%%%%%%%%%%%%%%%%%%%%%%%%%%%%%%%%%%%%%%%%%%%%%%%%%%%%%%%%%
		\centering
		\includegraphics[width=\linewidth]{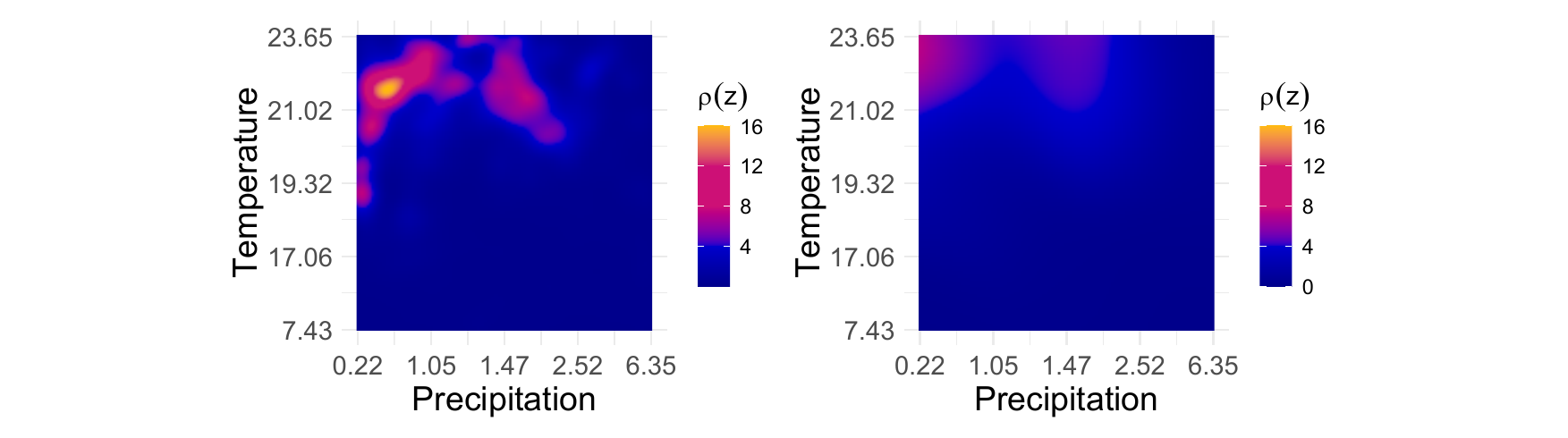}
		\caption{Estimated wildfire intensity as a joint function of temperature and precipitation. The left panel shows the posterior mean under the Besov--Laplace prior with regularity parameter $\alpha=1$, while the right panel reports the corresponding kernel estimate. Both methods identify higher intensities in warm and dry regions (upper-left corner of the covariate space), but the Besov--Laplace estimator produces sharper transitions between high- and low-intensity regimes. In particular, it more clearly isolates the low-intensity region associated with temperatures above $22^\circ$C and precipitation below $1$ mm/m$^2$, corresponding to the central-southern region of Canada with scarce wildfire activity.}
		\label{Fig:canada_rho_2d}
	\end{figure}%%%%%%%%%%%%%%%%%%%%%%%%%%%%%%%%%%%%%%%%%%%%%%%%%%%%%%%%%%%%%%%

	Lastly, we display the corresponding plug-in estimates of the spatial intensity function $\lambda^{(n)}_\rho$ from \eqref{Eq:PointProc} in Figure \ref{Fig:canada_spatial2}. Here, the aforementioned localised features of the posterior mean yield a precise reconstruction of the point pattern (top left plot), with a clear detection of the central-southern flat region, over which extremely low intensity is inferred. The estimated maximum intensity is located at the south-western corner and along the central band north of the Great Plains, which indeed are characterised by the highest density of wildfires. The plug-in kernel estimate (bottom right plot) was unable to produce such detailed reconstruction, rather yielding a much more diffuse spread of the spatial intensity. In the bottom left corner of Figure \ref{Fig:canada_spatial2}, we include a further benchmark obtained via the `bootstrapped' kernel procedure for covariate-based spatial intensity estimation developed by \cite{BGMMM20}. Consistently with the empirical results from the latter reference, this improves on the standard kernel method, achieving a more faithful reconstruction of the point pattern that is similar in shape to the one obtained via Besov-Laplace priors, albeit with less precisely separated zones of high and low intensity.
	
	\begin{figure}[H]%%%%%%%%%%%%%%%%%%%%%%%%%%%%%%%%%%%%%%%%%%%%%%%%%%%%%%%%%%
		\centering
		\includegraphics[width=\linewidth]{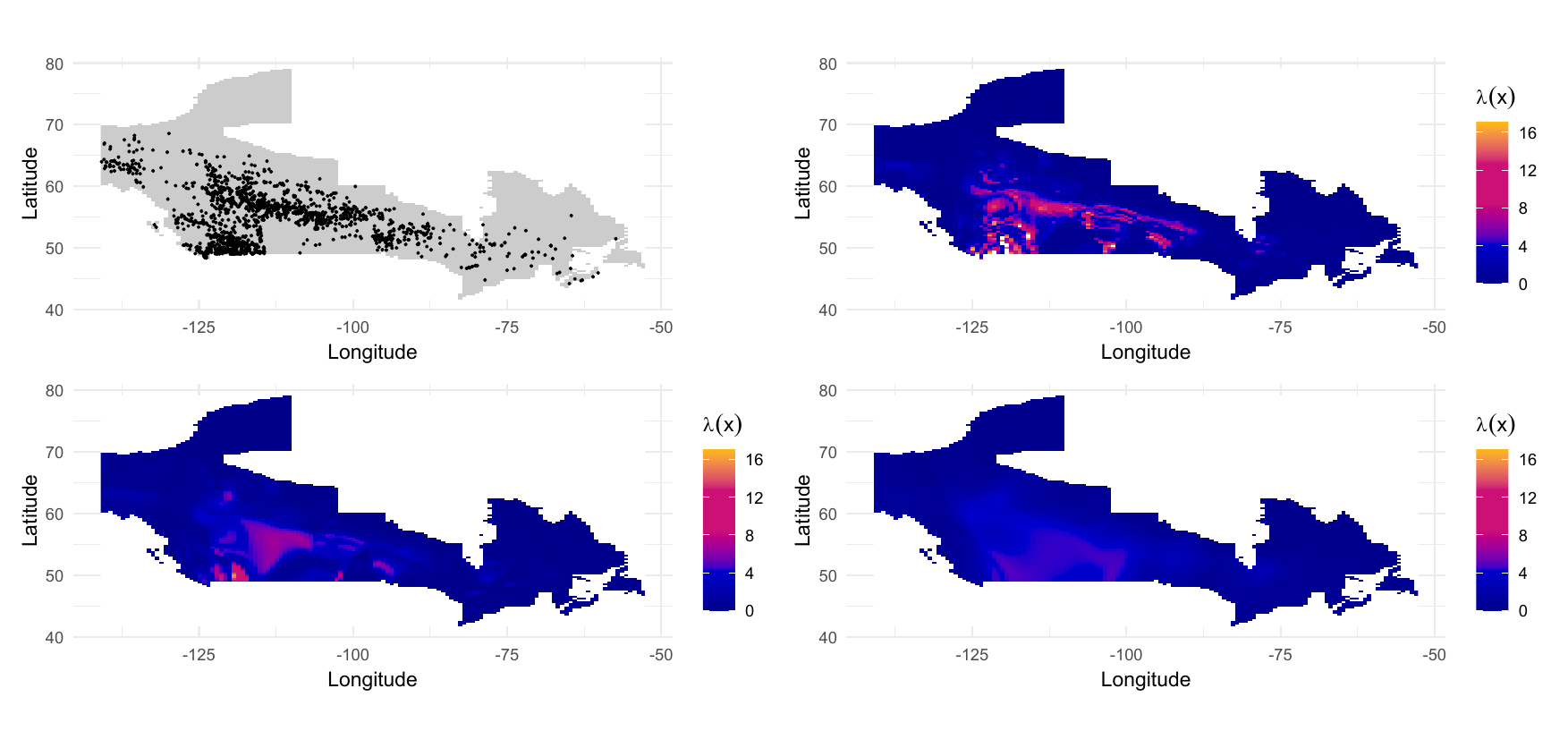}
		\caption{Plug-in estimates of the spatial intensity function for Canadian wildfires in June 2015. The top-left panel shows observed wildfire locations. The top-right panel displays the plug-in posterior mean based on the Besov--Laplace prior, while the bottom-left and bottom-right panels report the bootstrapped kernel estimator from \cite{BGMMM20} and the standard kernel estimator, respectively. }
		\label{Fig:canada_spatial2}
	\end{figure}%%%%%%%%%%%%%%%%%%%%%%%%%%%%%%%%%%%%%%%%%%%%%%%%%%%%%%%%%%%%%%%%

\end{document}